\documentclass[12pt,oneside]{amsart}

\usepackage{geometry}                
\usepackage{graphicx}
\usepackage{amssymb}
\usepackage{color}
\usepackage{bbm, dsfont}
\usepackage[hidelinks]{hyperref}

\hypersetup{
	colorlinks=false,
	pdfborder={0 0 0},
	pdfborderstyle={/S/U/W 0},
}

\numberwithin{equation}{section}

\usepackage{verbatim}

\usepackage[dvipsnames]{xcolor}

\usepackage{ulem}
\newtheorem{theorem}{Theorem}[section]
\newtheorem{lemma}[theorem]{Lemma}
\newtheorem{corollary}[theorem]{Corollary}
\newtheorem{proposition}[theorem]{Proposition}

\theoremstyle{definition}

\newtheorem{conjecture}[theorem]{Conjecture}

\theoremstyle{remark}
\newtheorem{remark}[theorem]{Remark}
\newtheorem*{remark*}{Note}
\newtheorem{question}[theorem]{Question}

\numberwithin{equation}{section}

\usepackage{amsmath}
\usepackage{mathrsfs}

\usepackage{verbatim}
\usepackage{amsmath,amssymb,amsthm}           
\usepackage{amssymb}            
\usepackage{amsfonts}            
\usepackage{mathrsfs}          
\usepackage{amsthm}
\usepackage{algorithm}  
\usepackage{algorithmicx}  
\usepackage{algpseudocode}  
\usepackage{mathtools}
\usepackage{commath}
\usepackage{physics}
\usepackage{bm}
\usepackage{graphicx}

\usepackage{float}
\usepackage{listings}
\usepackage{subfigure}
\usepackage{multirow}
\usepackage{color}
\usepackage{bbm}
\usepackage[export]{adjustbox}
\usepackage{enumerate}
\usepackage{bbm}

\usepackage{amsmath}
\usepackage{mathrsfs}

\newcommand{\RNum}[1]{\uppercase\expandafter{\romannumeral #1\relax}}

\newcommand{\specificthanks}[1]{\@fnsymbol{#1}}

\DeclareFontFamily{OML}{rsfs}{\skewchar\font'177}
\DeclareFontShape{OML}{rsfs}{m}{n}{ <5> <6> rsfs5 <7> <8> <9>
	rsfs7 <10> <10.95> <12> <14.4> <17.28> <20.74> <24.88> rsfs10 }{}
\DeclareMathAlphabet{\mathfs}{OML}{rsfs}{m}{n}

\newcounter{cnstcnt}
\newcommand{\cl}{%
	\refstepcounter{cnstcnt}%
	\ensuremath{c_{\thecnstcnt}}}
\newcommand{\cref}[1]{\ensuremath{c_{\ref*{#1}}}}

\newcounter{newcnstcnt}
\newcommand{\Cl}{%
	\refstepcounter{newcnstcnt}%
	\ensuremath{C_{\thenewcnstcnt}}}
\newcommand{\Cref}[1]{\ensuremath{C_{\ref*{#1}}}}

 \newcounter{newnewcnstcnt}
\newcommand{\bl}{%
	\refstepcounter{newnewcnstcnt}%
	\ensuremath{\beta_{\thenewnewcnstcnt}}}
\newcommand{\bref}[1]{\ensuremath{\beta_{\ref*{#1}}}}

\DeclareFontFamily{U}{mathx}{}
\DeclareFontShape{U}{mathx}{m}{n}{<-> mathx10}{}
\DeclareSymbolFont{mathx}{U}{mathx}{m}{n}
\DeclareMathAccent{\widehat}{0}{mathx}{"70}
\DeclareMathAccent{\widecheck}{0}{mathx}{"71}

\begin{document}

	\title{ Scaling Limit of Critical Loop Soup Clusters in Three, Four, and Five Dimensions}
	

		\author{Zhenhao Cai$^1$}
		\address[Zhenhao Cai]{Faculty of Mathematics and Computer Science, Weizmann Institute of Science}
		\email{zhenhao.cai@weizmann.ac.il}
		\thanks{$^1$Faculty of Mathematics and Computer Science, Weizmann Institute of Science}

		\author{Jian Ding$^2$}
		\address[Jian Ding]{New Cornerstone Science Laboratory, School of Mathematical Sciences, Peking University}
		\email{dingjian@math.pku.edu.cn}
		\thanks{$^2$New Cornerstone Science Laboratory, School of Mathematical Sciences, Peking University}

	
	
	\maketitle
	%
	%
	

	 	\begin{abstract}
 We prove that on the metric graph of $\mathbb{Z}^d$ for $d\in \{3,4,5\}$, clusters of the critical loop soup (or equivalently, clusters of the critical Gaussian free field level-set) admit a scaling limit. To the best of our knowledge, this presents the first scaling limit result for a percolation model on a three-dimensional lattice.

 
 
 


 
 
      \end{abstract}

\section{Introduction}\label{section_intro}

The \textit{loop soup} model has been an active object of study in probability theory and statistical physics since its introduction \cite{lawler2004brownian, lawler2007random}. Its origins lie in the study of the loop-erased random walk (LERW) \cite{1077314188}, where the loop soup provides a rigorous description of the loops removed by the loop-erasure procedure. It also played an important role in the development of Schramm-Loewner evolution (SLE), which was initiated by Schramm \cite{schramm2000scaling} and then became a central tool for analyzing scaling limits of interfaces associated with statistical physics models in two dimensions, including Bernoulli percolation \cite{smirnov2001critical}, Ising and random cluster models \cite{smirnov2010conformal, chelkak2012universality, chelkak2014convergence}, Gaussian free field (GFF) \cite{schramm2009contour}, etc. In particular, it was shown in \cite{sheffield2012conformal} that the conformal loop ensemble (CLE) \cite{sheffield2009exploration}, can be constructed from the Brownian loop soup \cite{lawler2004brownian}, with an explicit correspondence between their parameters:
\begin{equation*} 
	\alpha  =(4\kappa)^{-1} (3\kappa -8) (6-\kappa), \ \forall \kappa \in (\tfrac{8}{3},4], 
\end{equation*} 
where $\kappa$ is the SLE parameter, and $\alpha$ is the intensity of the Brownian loop soup as a Poisson point process (its definition is given below). While CLEs with different $\kappa$'s give the full scaling limits of different models (including Bernoulli percolation \cite{camia2006two}, random cluster model \cite{kemppainen2016conformal}, Ising model \cite{benoist2019scaling}, etc), the corresponding intensity $\alpha$ provides a new parameter with additivity that is not apparent at the discrete level. In this sense, the Brownian loop soup links these models at the level of scaling limits in two dimensions. At present, in dimensions three and higher, such connections remain far from clear. Nevertheless, these results have motivated a systematic study of the geometric properties of the Brownian loop soup, as well as its discrete analogue---the random walk loop soup \cite{lawler2007random}. Percolation provides a classical and natural perspective for exploring these properties. 


    The main object of this paper, the loop soup on metric graphs (also known as cable graphs), was first introduced in \cite{lupu2016loop} as a continuous extension of random walk loop soups. In recent years, it has emerged as one of the few percolation models for which substantial progress at criticality has been made beyond the planar setting, where powerful tools from complex analysis are no longer available, but below the regime where mean-field behavior appears (namely, $3\le d\le 5$ in the context of this paper). 
    For clarity, we first recall the definitions of loop soups and metric graphs. Let $\{X_t^{\Omega}\}_{t\ge 0}$ be a transient Markov process on $\Omega$ with transition density $q_t^{\Omega}(x,y)$, with respect to a measure $\mathrm{m}^{\Omega}$. Let $\mathbb{P}^{\Omega}_{x,y,t}$ denote the bridge measure of $X_\cdot^{\Omega}$ from $x$ to $y$ with duration $t$ (its transition density is given by $\frac{q_s^{\Omega}(x,\cdot )q_{t-s}^{\Omega}(\cdot ,y)}{q_t^{\Omega}(x,y)}$ for $0\le s\le t$). The loop measure (associated with $X_\cdot^{\Omega}$) is then defined by
\begin{equation}\label{def_1.1}
	\mu^{\Omega}(\cdot):= \int_{x\in \Omega}  \mathrm{dm}^{\Omega}(x) \int_{t>0} t^{-1} q_t^{\Omega}(x,x) \mathbb{P}^{\Omega}_{x,x,t}(\cdot) \mathrm{d}t. 
\end{equation}
    For $\alpha>0$, the Poisson point process with intensity $\alpha\mu^{\Omega}$ is called the loop soup of intensity $\alpha$ and denoted by $\mathcal{L}_{\alpha}^{\Omega}$. The three types of loop soups mentioned above all fit into this framework, with different choices of the underlying Markov process.
    \begin{itemize}

    	\item Brownian loop soup: for $\Omega=\mathbb{R}^d$ with $d\ge 3$, take $X_\cdot^{\mathbb{R}^d}$ to be standard Brownian motion on $\mathbb{R}^d$, and let $\mathrm{m}^{\mathbb{R}^d}$ be the Lebesgue measure on $\mathbb{R}^d$. (In two dimensions, due to recurrence, an absorbing boundary is needed to avoid divergences. The same applies to the loop soups defined below.)

    	


    	\item Random walk loop soup: for $\Omega=\mathbb{Z}^d$ with $d\ge 3$, take $X_\cdot^{\mathbb{Z}^d}$ to be the (continuous-time) simple random walk on $\mathbb{Z}^d$. I.e., for $x,y\in \mathbb{Z}^d$ and $t\ge 0$, 
  \begin{equation*}
	  	\mathbb{P}\big(  X_{t+\Delta t}^{\mathbb{Z}^d} =y  \mid  X_t^{\mathbb{Z}^d} =x \big) =   \left\{
\begin{aligned}
&(2d)^{-1}\Delta t\cdot \mathbbm{1}_{\{x,y\}\in \mathbb{L}^d}+ o(\Delta t)   & \text{if}\ x\neq y;  \\
&  1- \Delta t + o(\Delta t)    & \text{if}\ x= y.
\end{aligned}
\right.
 \end{equation*}
Here $\mathbb{L}^d:=\{ \{x,y\}:x\ \text{and}\ y\ \text{are adjacent vertices of}\ \mathbb{Z}^d \}$ denotes the edge set of $\mathbb{Z}^d$. Let $\mathrm{m}^{\mathbb{Z}^d}$ be the counting measure.




 	 \item  Metric graph loop soup: for each $e=\{x,y\}\in \mathbb{L}^d$, we assign a compact interval $I_e$ of length $d$ whose endpoints are identified with $x$ and $y$ (the choice of the common interval length $d$ is made for convenience and does not affect the geometric properties studied in this paper). The metric graph $\widetilde{\mathbb{Z}}^d$ is defined as the union of these intervals, glued at their common endpoints. The Markov process $X_\cdot^{\widetilde{\mathbb{Z}}^d}$ is defined as follows. Within each interval $I_e$, it behaves as standard one-dimensional Brownian motion. When it reaches a vertex $x\in \mathbb{Z}^d$, it uniformly selects one of the incident intervals and then evolves as a Brownian excursion along this interval. Let $\mathrm m^{\widetilde{\mathbb Z}^d}$ be the measure on $\widetilde{\mathbb{Z}}^d$ whose restriction to each interval is the Lebesgue measure.


    \end{itemize}




     When the loop soup $\mathcal{L}_{\alpha}^{\widetilde{\mathbb{Z}}^d}$ is viewed as a percolation model, a point $v \in \widetilde{\mathbb{Z}}^d$ is called open if it is contained in at least one loop of $\mathcal{L}_{\alpha}^{\widetilde{\mathbb{Z}}^d}$, and closed otherwise. According to the isomorphism theorem \cite{le2011markov, lupu2016loop}, when $\alpha=\frac{1}{2}$, the collection of the closed points has the same distribution as the zero set of the GFF $\{\phi_v\}_{v\in \widetilde{\mathbb{Z}}^d}$. Here $\phi$ is the mean-zero Gaussian field with covariance given by the Green's function: 
      \begin{equation}
    	\mathbb{E}\big[\phi_{v_1}\phi_{v_2} \big] = G(v_1,v_2):= \int_{t>0} q_t^{\widetilde{\mathbb{Z}}^d}(v_1,v_2)\mathrm{d}t, \ \forall v_1,v_2\in \widetilde{\mathbb{Z}}^d. 
    \end{equation}
    One can equivalently construct it by first sampling a discrete GFF on $\mathbb{Z}^d$ and then independently interpolating along each edge by a Brownian bridge. In fact, the open clusters of $\mathcal{L}_{1/2}^{\widetilde{\mathbb{Z}}^d}$ and the sign clusters of $\phi$ are not only equivalent, but are also both at their respective percolation thresholds: $\alpha_*=\frac{1}{2}$ is the critical intensity for $\mathcal{L}_{\alpha}^{\widetilde{\mathbb{Z}}^d}$ (see \cite{chang2024percolation}), and $h_*=0$ is the critical level for the GFF level-set $E^{\ge h}:=\{v \in \widetilde{\mathbb{Z}}^d: \phi_v \ge h\}$ (see \cite{lupu2016loop}). In the remainder of this paper, we focus only on the critical intensity $\frac{1}{2}$. For brevity, we write $\mathcal{L}:= \mathcal{L}_{1/2}^{\widetilde{\mathbb{Z}}^d}$, and let $\mathfrak{C}$ denote the collection of all open clusters of $\mathcal{L}$. We write $A\xleftrightarrow{} A'$ for the event that $A$ and $A'$ intersect the same cluster in $\mathfrak{C}$. Applying the identity above, it was derived in \cite[Proposition 5.2]{lupu2016loop} that for any $v_1,v_2\in \widetilde{\mathbb{Z}}^d$, 
      \begin{equation}\label{lupu_two_point}
    	\mathbb{P}(v_1  \xleftrightarrow{} v_2 ) = \frac{2}{\pi} \arcsin  \Big(\frac{G(v_1,v_2)}{\sqrt{G(v_1,v_1)G(v_2,v_2)}}   \Big) \asymp (|v_1-v_2|+1)^{2-d}, 
    \end{equation}
    where $|v_1-v_2|$ denotes the Euclidean distance between $v_1$ and $v_2$ (throughout this paper, points and subsets of $\widetilde{\mathbb{Z}}^d$ are identified with their images under the map obtained by first embedding $\mathbb{Z}^d$ canonically into $\mathbb{R}^d$ and then linearly interpolating along each edge), and $f \asymp g $ means that there exist constants $C>c>0$ depending only on $d$ such that $cg\le f\le Cg$.

       




    


    {\color{blue}
    
     
     }

   In his inspiring note \cite{werner2020clusters}, Werner conjectured that for $3\le d\le 5$, the clusters in $\mathfrak{C}$ have a scaling limit; in addition, this limit is supported on the families of clusters of fractal dimension $\frac{d}{2}+1$. It was further predicted that this scaling limit could be constructed from the Brownian loop soup. To be precise, we denote $\mathcal{L}^{\mathrm{B}}:=\mathcal{L}_{1/2}^{\mathbb{R}^d}$, and define $\mathfrak{C}^{\mathrm{B}}$ as the collection of all clusters of $\mathcal{L}^{\mathrm{B}}$. 
    Using couplings between random walks and Brownian motion, it was shown that the macroscopic loops in $\mathcal{L}$ converge to the Brownian loops in $\mathcal{L}^{\mathrm{B}}$ (see \cite{lawler2007random, sapozhnikov2018brownian, qian2026coupling}). Here ``macroscopic'' refers to objects with diameters $\asymp N$ within a fixed region of scale $N$, and the convergence is understood in the sense that after rescaling space by $\frac{1}{N}$, the objects converge as $N\to \infty$. Meanwhile, one can show that microscopic loops cannot by themselves form macroscopic clusters. Thus, heuristically, the scaling limit of $\mathfrak{C}$ should be described by $\mathfrak{C}^{\mathrm{B}}$ (serving as the skeleton of the limiting clusters) together with a gluing relation (encoding the limiting connectivity effects of microscopic loops), which is formally a (possibly random) equivalence relation on $\mathfrak{C}^{\mathrm{B}}$ (those in the same equivalence class are declared to be connected). A blueprint for the behavior of this gluing relation in different dimensions was proposed in \cite{werner2020clusters}:  
   \begin{itemize}

   	\item[$\spadesuit$] $d=3$. No additional gluing is needed. In other words, the scaling limit of $\mathfrak{C}$ is exactly $\mathfrak{C}^{\mathrm{B}}$ (note that the Brownian loops in $\mathbb{R}^3$ can intersect).

   	 \noindent (P.S. As established in \cite{lupu2018convergence}, this is the case in dimension two.)

   	\item[$\heartsuit$] $d=4$. The gluing relation is non-trivial and measurable with respect to the Brownian loop soup $\mathcal{L}^{\mathrm{B}}$.

   	\item[$\clubsuit$] $d=5$. The gluing relation is non-trivial and involves additional randomness beyond $\mathcal{L}^{\mathrm{B}}$.

   \end{itemize} 
   For $d>6$, it was predicted in \cite{werner2020clusters} (with insightful heuristics)  that $\mathcal{L}$ should exhibit mean-field behavior, analogous to Bernoulli percolation:   
   \begin{itemize}
   	\item[$\diamondsuit$] $d>6$. The scaling limit of $\mathfrak{C}$ consists of clusters of fractal dimension $4$, whose law is described by integrated super-Brownian excursions.   	  
   \end{itemize}
 By now, a substantial part of this broad picture has been clarified. The conjectured cluster dimensions have been established via estimates of the one-arm probability $\theta_d(N):=\mathbb{P}(\bm{0}\xleftrightarrow{} \partial [-N,N]^d)$, where $\bm{0}$ is the origin and $\partial A$ is the boundary of $A$ in $\mathbb{R}^d$. Precisely, it was derived in successive works \cite{ding2020percolation, cai2025one, drewitz2026arm, drewitz2025critical, cai2024one} that 
 \begin{equation*}
 	\text{when $3\le d \le 5$,}\ \theta_d(N)\asymp N^{-\frac{d}{2}+1};\  \text{when $d>6$,}\ \theta_d(N)\asymp N^{-2}.
 \end{equation*}
At the critical dimension ($d_c=6$), the exponent of $\theta_6(N)$ was also obtained in  \cite{cai2024one}: $\theta_6(N)=N^{-2+o(1)}$, with the upper and lower bounds differing by a sub-polynomial multiplicative factor. These estimates further motivated investigations into cluster volumes \cite{cai2024quasi, cai2024incipient, drewitz2024cluster}. In particular, it was shown that a macroscopic cluster in $\mathfrak{C}$ typically contains $\asymp N^{\frac{d}{2}+1}$ (resp. $\asymp N^{4}$) vertices when $3 \le d \le 5$ (resp. $d>6$), consistent with the conjectured cluster dimensions in \cite{werner2020clusters}. Meanwhile, a series of works \cite{cai2025heterochromatic, cai2025separation, cai2025gap} showed that Item $\spadesuit$ may not hold. Specifically, it was proved that for $d=3$, the dimension of clusters in $\mathfrak{C}^{\mathrm{B}}$ is strictly less than $\frac{5}{2}$. Combined with the volume estimate above, it yields that contrary to Item $\spadesuit$, a gluing relation is necessary already in dimension three. A more detailed account of related results can be found in \cite{cai2025separation}.



\subsection{Main result}


    The main result of this paper settles (arguably) the central conjecture in \cite{werner2020clusters}. In short, we prove that for $3\le d\le 5$, the scaling limit of $\mathfrak{C}$ exists. In a future version of this manuscript, we will show that this scaling limit can be obtained from $\mathfrak{C}^{\mathrm{B}}$ by imposing a random gluing relation (in other words, the scenario in Item $\clubsuit$ is the correct one for all $3\le d\le 5$) and is invariant under dilations and rotations.    
    Prior to the present work, a prominent result of this type was the scaling limit of loop-erased random walk \cite{kozma2007scaling}, whose proof strategy substantially inspired our approach. The result in \cite{kozma2007scaling} also played a central role in the derivation of the scaling limit of uniform spanning tree \cite{angel2021scaling}. Next, we present the precise statement of our main theorem. We begin with some basic notation needed for its formulation. 
    For $A_1,A_2\subset \mathbb{R}^d$, their Euclidean distance is 
    \begin{equation}
    	\mathrm{d}(A_1,A_2):=\inf\{|x_1-x_2|: x_1 \in A_1 ,x_2 \in A_2\}, 
    \end{equation}
  and their Hausdorff distance is  
     \begin{equation}
    	\mathrm{d}_{\mathrm{H}}(A_1,A_2):= \inf \big\{r>0: A_i  \subset  B(A_{3-i},r) \ \text{for}\ i\in \{1,2\} \big\},
    \end{equation}
    where $B(A,r):=\{x\in \mathbb{R}^d: \mathrm{d}(\{x\},A)<r \}$. Let $\mathbb{B}:=B(\{\bm{0}\},1)$ denote the unit ball centered at $\bm{0}$. For $\epsilon>0$ and a collection $\mathcal{A}$ of subsets of $\mathbb{R}^d$, let $\mathcal{A}_{\mathbb{B}}^{> \epsilon}$ denote the subcollection of $\mathcal{A}$ consisting of sets contained in $\mathbb{B}$ with Euclidean diameters greater than $\epsilon$.  
     For any finite collections $\mathcal{A}_1,\mathcal{A}_2$ of subsets of $\mathbb{R}^d$, if $|\mathcal{A}_1|=|\mathcal{A}_2|$, we define their induced Hausdorff distance by 
     \begin{equation}
    	\mathrm{d}_{\mathrm{H}}^*(\mathcal{A}_1,\mathcal{A}_2):= \min_{\sigma\in \mathrm{bij}(\mathcal{A}_1,\mathcal{A}_2) } \max_{A \in \mathcal{A}_1 } 	\mathrm{d}_{\mathrm{H}}\big(A , \sigma(A) \big), 
    \end{equation} 
    where $\mathrm{bij}(\mathcal{A}_1,\mathcal{A}_2)$ is the collection of all bijections from $\mathcal{A}_1$ to $\mathcal{A}_2$; otherwise (i.e., $|\mathcal{A}_1|\neq |\mathcal{A}_2|$), $\mathrm{d}_{\mathrm{H}}^*(\mathcal{A}_1,\mathcal{A}_2):=+\infty$. For $\delta>0$, let $\delta\cdot \mathfrak{C}$ denote the collection obtained from $\mathfrak{C}$ by applying the dilation $x \mapsto \delta x$ to each cluster.

              \begin{theorem}\label{thm1}
       	For any $d\in \{3,4,5\}$ and Lebesgue-a.e. $\epsilon>0$, $(2^{-k}\cdot \mathfrak{C})^{> \epsilon}_{\mathbb{B}}$ converges in distribution as $k \to \infty$, with respect to the distance $\mathrm{d}^{*}_{\mathrm{H}}$.  
       \end{theorem}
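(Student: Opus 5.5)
The plan is to show that the laws of $(2^{-k}\cdot\mathfrak{C})^{>\epsilon}_{\mathbb{B}}$ form a Cauchy sequence, in the spirit of the loop-erased random walk argument of \cite{kozma2007scaling}. Concretely, we look for a coupling of $\mathcal{L}$ on $2^{-k}\widetilde{\mathbb{Z}}^d$ and on $2^{-k-1}\widetilde{\mathbb{Z}}^d$ under which, with probability at least $1-C2^{-ck}$, the two collections of macroscopic clusters are within $\mathrm{d}^*_{\mathrm{H}}$-distance $2^{-ck}$. Summing over $k$ then gives a limit in the bounded-Lipschitz/Prokhorov sense on the space of finite collections of compact sets. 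The restriction to Lebesgue-a.e. $\epsilon$ handles the discontinuity of the map ``keep clusters of diameter $>\epsilon$ inside $\mathbb{B}$''. For all but countably many $\epsilon$, the limiting object almost surely has no cluster of diameter exactly $\epsilon$ and none touching $\partial\mathbb{B}$; the second requirement is arranged by scaling invariance or continuity in the radius. We therefore prove convergence for a slightly enlarged family of cutoffs and then take continuity points.

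\textbf{Step 1 (skeleton coupling).} Fix a mesoscopic scale $\eta=2^{-\gamma k}$ with small $\gamma>0$. Using the strong KMT-type couplings between random walk loops and Brownian loops \cite{lawler2007random, sapozhnikov2018brownian, qian2026coupling}, couple the loops of diameter $\ge\eta$ at both lattice scales with the same Brownian loop soup $\mathcal{L}^{\mathrm{B}}$. Each such loop then lies within distance $2^{-(1-\gamma')k}$ of its Brownian counterpart. Loops of diameter $<\eta$ are sampled independently at each scale.

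\textbf{Step 2 (quasi-multiplicativity and stability of connections).} The key estimate is that connectivity between macroscopic pieces is a local, stable event. Whether two big loops are joined in $\mathfrak{C}$ should depend, up to error $2^{-ck}$, only on the configuration near the places where they come within distance $\eta$ of each other. There it should be determined by an asymptotically scale-invariant local quantity. We prove this using the one-arm estimate $\theta_d(N)\asymp N^{-d/2+1}$, the two-point function \eqref{lupu_two_point}, and the separation/quasi-multiplicativity results of \cite{cai2025separation, cai2025gap}. With these we show three things.
\begin{itemize}
\item[(a)] Clusters built only from loops of diameter $<\eta$ have diameter $\ll\epsilon$ with high probability.
\item[(b)] With high probability there are no ``near-misses'': two macroscopic pieces at distance $\le 2^{-c'k}$ that are not connected through loops at scale $\le\eta$. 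This is a polynomial-decay bound for a multi-arm (four-arm-like) event.
\item[(c)] Conditionally on the large loops, the gluing of two large loops near a meeting region converges in law as $k\to\infty$.
\end{itemize}
For (c) we express the conditional connection probability through a boundary-to-boundary connection kernel, via the isomorphism with the GFF and the harmonic-measure/capacity representations. This kernel converges because it is a functional of Green's functions with Brownian scaling limits.

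\textbf{Step 3 (assembling).} Using (b), for both scales we realize the cluster structure as an equivalence relation on the finitely many large loops with diameter $>\eta$ that intersect $\mathbb{B}$. The number of these loops is tight. Steps 1 and 2(c) then give that the laws of these equivalence relations, jointly with the loop shapes, are close in total variation up to $2^{-ck}$. Item (a), together with the fact that small loops attached to a macroscopic cluster lie within $\eta$ of its skeleton, shows that each cluster is within Hausdorff distance $O(\eta)$ of the union of its large loops. This gives the Cauchy estimate.

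\textbf{Main obstacle.} The hard part is Step 2(c) together with (b): the gluing relation carries randomness beyond $\mathcal{L}^{\mathrm{B}}$. So we cannot couple the gluings deterministically and must prove convergence in law of a joint conditional structure with a polynomial rate, uniformly over the configuration of large loops. The first thing to try is a multi-scale resampling argument. Condition on all loops of diameter $\ge\eta$, reveal the small-loop configuration in annuli around meeting points, and use the separation lemmas to show that the conditional gluing law decorrelates from the outside at a polynomial rate. The law can then be coupled across consecutive scales, with errors summed over $O(\log)$ scales.
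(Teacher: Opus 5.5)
\textbf{The gap is in Step 2(c).} That step carries essentially the whole content of the theorem, and the justification you give for it does not hold up.

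\emph{No GFF is attached to your small loops.} The isomorphism theorem and the Lupu--Werner formula (\ref{formula_LW}) are statements about the loop soup of a Markov process. The loops of diameter $<\eta$ are not of this form, since a diameter cutoff is not a killing. So there is no GFF whose sign clusters encode the gluing you want to analyse. The paper gets such a structure from geometric killing at a mesoscopic mass. The surviving loops are exactly the soup of a massive GFF. The killed loops below a mesoscopic diameter are shown to be negligible via Russo's formula and the two-arm bound of Lemma \ref{lemma3.10_two_arm}, which itself relies on the density of pivotal clusters.

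\emph{The kernel does not have a Brownian limit.} Even with a GFF available, the conditional gluing probability is an explicit function of $\sum\mathbb{K}_{D_1\cup D_2}(v_1,v_2)\phi_{v_1}\phi_{v_2}$, by (\ref{formula_LW}) and Lemma \ref{newlemma2.3}. Here $D_1,D_2$ are random lattice-scale fractal clusters and the $\phi_{v}$ are lattice-scale boundary values. This is not a functional of Green's functions with a Brownian limit, and proving that its law converges is essentially the theorem itself.

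\emph{The fallback and Step 1 do not compare the meshes.} Your fallback, decorrelation across annuli, is a mixing statement at a fixed mesh. It gives no comparison between meshes $2^{-k}$ and $2^{-k-1}$. The KMT coupling of Step 1 also does not preserve the lattice-level intersection pattern of the large loops. For $d=3$, whether two coupled walk loops meet is a microscopic event, so the coupling discards exactly the information that determines the gluing.

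\emph{(b) is not a multi-arm bound for clusters.} If the ``pieces'' in (b) are distinct clusters, no union bound can give (b). The probability that two disjoint clusters cross an annulus of ratio $r$ is of order $r^{\frac d2+1}$, and $\frac d2+1<d$. Hence the expected number of $r$-balls in $\mathbb{B}$ from which two disjoint clusters reach macroscopic distance is of order $r^{1-\frac d2}\to\infty$.

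\textbf{The missing idea is to compare the two meshes directly, without identifying any limit.} The paper uses Kozma's isometric interpolation. It refines $\delta\mathbb{Z}^d$ into $\frac{\delta}{2}\mathbb{Z}^d$ through polynomially many weighted graphs, consecutive ones differing in a single mesoscopic box. The conductances are chosen so that hitting probabilities change only by a relative factor $1+O(r^{-c})$ (Lemma \ref{lemma_hitting_error}). Each step then only needs to cost a polynomially small fraction of the inverse number of steps (Lemma \ref{lemma_3.1}). The per-step comparison splits into cases.
\begin{itemize}
\item If a large killed loop comes near the modified box, Lemma \ref{lemma_oneloop_onecluster} applies: a loop and a disjoint cluster both crossing an annulus has exponent $d+c$. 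So the loop is glued to every cluster crossing that annulus, and its local perturbation is harmless.
\item Otherwise, with high probability under the crossing-loop coupling, the partial massive sign clusters explored outside a ball around the box coincide on the two graphs.
\item Three or more such partial clusters reaching the ball again costs exponent $d+c$ (Lemma \ref{lemma_threecluster}).
\item In the remaining two-cluster case, which is the genuinely random one and cannot be removed by any union bound, Lemma \ref{newlemma2.3} and (\ref{formula_LW}) reduce the gluing probability to excursion kernels between the \emph{same} explored sets on the two graphs. These kernels differ by a small relative error.
\end{itemize}
The output is the one-sided inequality of Proposition \ref{prop_1.2}, not a coupling with small $\mathrm{d}^*_{\mathrm{H}}$. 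That suffices, via Kozma's Lemmas 5.11--5.12 and compactness of $\mathfrak{X}^{\mathrm{c}}_n$.

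\textbf{Two smaller points.} First, the number of loops of diameter $>\eta=2^{-\gamma k}$ meeting $\mathbb{B}$ is of order $\eta^{-d}$, so it is not tight. Per-region errors must therefore beat a polynomially growing count, which is why the paper needs exponents exceeding $d$. Second, the a.e.-$\epsilon$ restriction cannot be handled through properties of a limit whose existence is what you are proving. Nor can it use scaling invariance, which is not available here. The paper instead proves anti-concentration of cluster diameters, and of clusters near $\partial\mathbb{B}$, at the discrete level and uniformly in $k$ (Lemmas \ref{lemma_continuous_diameter} and \ref{lemma_cluster_closetoboundary}).
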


       \begin{remark}
       	 (1) The exceptional values of $\epsilon$ in Theorem \ref{thm1} correspond to potential atoms in the distribution of cluster diameters in the loop soup. In a future version of this manuscript, we will exclude the existence of such exceptional values, thereby removing the Lebesgue-a.e.\ restriction on $\epsilon$ from the statement of Theorem \ref{thm1}. Technically, this improvement would also allow us to extend the convergence from dyadic scales $\delta=2^{-k}$ to all scales as $\delta\downarrow0$.

 	(2)  We expect that the method in the proof of Theorem \ref{thm1} can be extended to more general graphs, for instance to periodic graphs in dimensions $d=3,4,5$. Moreover, the scaling limits on these graphs should coincide with those in Theorem \ref{thm1}, up to an affine transformation. To keep the exposition focused, we have chosen not to pursue this extension here; indeed, the case of $\mathbb{Z}^d$ already captures the main ideas and involves substantial technical difficulties.

       \end{remark}

 \subsection{Proof idea}\label{subsection2.1_sketch}

 To prove Theorem \ref{thm1}, it suffices to establish the following proposition. For each $n\in \mathbb{N}^+$, we define $\mathfrak{X}^{\mathrm{o}}_n$ (resp. $\mathfrak{X}^{\mathrm{c}}_n$) as the space of finite collections of at most $n$ open (resp. closed, in the topological sense) subsets of $\overline{\mathbb{B}}$, equipped with the distance $\mathrm{d}_{\mathrm{H}}^*$. Note that $\mathfrak{X}^{\mathrm{c}}_n$ is compact. For any $\mathcal{A}\in \mathfrak{X}^{\mathrm{o}}_n$, we denote by $\overline{\mathcal{A}}$ the collection obtained from $\mathcal{A}$ by replacing each element with its closure. In particular, $\overline{\mathcal{A}}\in \mathfrak{X}^{\mathrm{c}}_n$. Let $\mathfrak{X}_n:= \mathfrak{X}^{\mathrm{o}}_n \cup \mathfrak{X}^{\mathrm{c}}_n$. For any $\mathcal{A}\in \mathfrak{X}_n$ and $r>0$, we define $\mathcal{A}+B(r):= \{B(A,r)\}_{A\in \mathcal{A}}$. For $\mathcal{A}_1, \mathcal{A}_2 \in \mathfrak{X}_n$,  we write $\mathcal{A}_1\sqsubseteq\mathcal{A}_2$ if there exists a bijection $\sigma\in \mathrm{bij}(\mathcal{A}_1,\mathcal{A}_2)$ such that $A\subset \sigma(A)$ holds for all $A\in \mathcal{A}_1$.

  \begin{proposition}\label{prop_1.2}
  For any $3\le d\le 5$, there exist constants $c,c'>0$    such that for Lebesgue-a.e. $\epsilon>0$, any $\mathcal{A}\in \mathfrak{X}^{\mathrm{o}}_n$, and all sufficiently large integer $k\in \mathbb{N}^+$, 
   \begin{equation}\label{ineq_prop1.2}
 	\mathbb{P}\big(  ( \overline{ 2^{-k}\cdot  \mathfrak{C}})^{>\epsilon}_{\mathbb{B}}\sqsubseteq  \mathcal{A}  \big) \le  \mathbb{P}\big(  ( \overline{2^{-k-1}\cdot  \mathfrak{C} })^{>\epsilon}_{ \mathbb{B}}\sqsubseteq\mathcal{A} + B(2^{-ck})  \big)  + 2^{-c'k}. 
  \end{equation}  	 
  \end{proposition}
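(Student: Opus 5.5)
Our plan is to construct a coupling between the loop soup $\mathcal{L}$ viewed at scale $2^{k}$ and at scale $2^{k+1}$, in the spirit of the Kozma argument for loop-erased random walk \cite{kozma2007scaling}. Under this coupling, with probability at least $1-2^{-c'k}$, every macroscopic cluster of $2^{-k}\cdot\mathfrak{C}$ inside $\mathbb{B}$ with diameter $>\epsilon$ is matched to a cluster of $2^{-k-1}\cdot\mathfrak{C}$ that is Hausdorff-close, within $2^{-ck}$, and vice versa. On this event, $(\overline{2^{-k}\cdot\mathfrak{C}})^{>\epsilon}_{\mathbb{B}}\sqsubseteq\mathcal{A}$ implies $(\overline{2^{-k-1}\cdot\mathfrak{C}})^{>\epsilon}_{\mathbb{B}}\sqsubseteq\mathcal{A}+B(2^{-ck})$. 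This gives \eqref{ineq_prop1.2}, since $\mathcal{A}$ is open and we only need inclusion after a fattening.

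Concretely, the coupling has three steps.

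\textbf{Step 1: macroscopic loops.} We split $\mathcal{L}$ into loops of diameter at least $2^{k(1-\eta)}$ and the rest, for a small $\eta>0$. For both lattices, the macroscopic loops are coupled to a common Brownian loop soup $\mathcal{L}^{\mathrm{B}}$ via the KMT-type couplings in \cite{lawler2007random, qian2026coupling}. The matched loops are then within $2^{k(1-\eta')}$ of each other in sup-distance, with polynomially small failure probability.

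\textbf{Step 2: microscopic loops.} We need to show that the connectivity induced by the microscopic loops between macroscopic ones is asymptotically determined up to polynomial errors. Here we would use the one-arm bound $\theta_d(N)\asymp N^{-d/2+1}$ together with quasi-multiplicativity and separation estimates from \cite{cai2025separation, cai2025gap}. The goal is a \emph{stability} statement: whether two macroscopic loops that come within distance $2^{k(1-\eta)}$ get glued depends, up to error $2^{-ck}$, only on the local configuration in a mesoscopic box. Moreover, the gluing law in such a box at scale $2^k$ should be close, in total variation, to that at scale $2^{k+1}$ after resampling.

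\textbf{Step 3: cluster matching.} Assuming Step 2, we rule out bad configurations. These are: near-touching without gluing, clusters of diameter close to $\epsilon$ (handled by the Lebesgue-a.e.\ choice of $\epsilon$, where the diameter distribution has no atom), and clusters near $\partial\mathbb{B}$ (handled by absorbing the boundary into the $2^{-ck}$ fattening). Finally, a union bound over the $O(2^{Ck\eta})$ near-touching pairs, each with failure probability $2^{-ck}$, closes the argument once $\eta$ is chosen small.

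We expect Step 2 to be the main obstacle. In $d=3,4,5$ the gluing is genuinely random, as the discussion of Item $\clubsuit$ indicates, so it cannot be read off from $\mathcal{L}^{\mathrm{B}}$. One needs a quantitative \emph{mixing} or \emph{scale-coupling} statement: conditioned on the macroscopic skeleton, the events that two nearby macroscopic pieces connect through microscopic loops have laws at scales $2^k$ and $2^{k+1}$ differing by $2^{-ck}$. The approach we would try first is a multi-scale resampling. In each annulus around a near-contact point, we use the separation lemma to couple the two configurations with positive probability at each dyadic scale. Iterating over $\asymp \eta k$ scales then gives failure probability $(1-c)^{\eta k}=2^{-ck}$.
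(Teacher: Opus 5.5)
There is a genuine gap, and it is your Step 2. As written it restates the proposition, and the mechanism you propose cannot supply it. A separation-lemma coupling with positive success probability at each dyadic scale shows that a configuration forgets boundary data at distant scales (mixing, quasi-multiplicativity). It gives no comparison between two \emph{different lattices}. Whether two macroscopic pieces are glued is decided by loops at all scales down to the mesh, where $2^{-k}\mathbb{Z}^d$ and $2^{-k-1}\mathbb{Z}^d$ genuinely differ. So ``the two gluing laws agree up to $2^{-ck}$'' is precisely the lattice-independence that has to be proved.

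Your Step 1 makes this worse. Moving every macroscopic loop by $2^{k(1-\eta')}$ lattice units scrambles exactly the microscopic geometry near contact points that decides gluing. Asking the gluing laws to match conditionally on the coupled skeleton therefore presupposes that gluing is asymptotically a functional of the continuum skeleton plus independent noise. That is essentially the conclusion, and more than the paper itself establishes.

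The bookkeeping does not close either. With $O(2^{Ck\eta})$ locations and failure probability $(1-c)^{\eta k}$ at each, both exponents are linear in $\eta$, so taking $\eta$ small does not help. You need a per-location failure exponent strictly larger than the entropy exponent, and an unquantified per-scale constant cannot give that.

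Finally, clusters near $\partial\mathbb{B}$ or with diameter near $\epsilon$ cannot be absorbed into the fattening of $\mathcal{A}$. A cluster entering or leaving $(\cdot)^{>\epsilon}_{\mathbb{B}}$ changes the cardinality, which $\sqsubseteq$ forbids. You need quantitative anti-concentration (Lemmas \ref{lemma_continuous_diameter} and \ref{lemma_cluster_closetoboundary}), not just the absence of atoms.

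The paper avoids all of this by never moving loops globally. It relies on three ingredients your Step 2 lacks.

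\textbf{Local interpolation with relative kernel error.} The paper uses Kozma's isometric interpolation: graphs $\mathbf{G}_1=\mathbb{Z}^d,\dots,\mathbf{G}_K$ that change one box of side $L=\delta^{\beta-1}$ at a time, with $\delta=2^{-k}$, $\beta$ small, and $K\asymp\delta^{-d(\beta+\beta')}$. There is a coupling under which loops coincide outside $B(z_i,5dL)$ (Lemma \ref{lemma_coupling_crossingloop}). Hitting probabilities change by relative error $L^{-c}$ (Lemma \ref{lemma_hitting_error}).

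\textbf{An exact two-cluster formula via the massive GFF.} Small loops are separated by geometric killing rather than a diameter cutoff, so they form a massive loop soup whose clusters are sign clusters of a massive GFF. A diameter-truncated soup has no isomorphism theorem. Corollary \ref{coro3.11}, together with a Russo-formula argument based on Lemma \ref{lemma3.10_two_arm}, shows that killed loops of small diameter are negligible. At a patch with no large loop nearby, one explores the partial sign clusters outside the ball. If exactly two reach it, Lemma \ref{newlemma2.3} and the Lupu--Werner formula (\ref{formula_LW}) express their conditional connection probability through the excursion kernel $\mathbb{K}$. The $L^{-c}$ kernel error then becomes an $O(\delta^{c(1-\beta)})$ change, small compared with $1/K$.

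\textbf{Arm exponents exceeding $d$.} Every other bad event at a patch has probability $(n/N)^{d+c}$ for an annulus of radii $n<N$, by Lemmas \ref{lemma_oneloop_onecluster} and \ref{lemma_threecluster}. These events are a large loop together with a disjoint crossing cluster, or three disjoint crossing clusters. The gain $c>0$ over the entropy exponent $d$ comes from sharp arm exponents combined with the density estimate of Lemma \ref{lemma_cluster_dense}.
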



 The deduction of Theorem \ref{thm1} from Proposition \ref{prop_1.2} follows directly from the argument in \cite[Section 5.4]{kozma2007scaling}. More precisely, following the procedure of \cite[Lemmas 5.11 and 5.12]{kozma2007scaling}, Proposition \ref{prop_1.2} yields the following result: for any open subset $\mathcal{O}\subset \mathfrak{X}^{\mathrm{c}}_n$ and any $\epsilon>0$, there exists an open subset $\mathcal{V}\subset \mathfrak{X}^{\mathrm{c}}_n$ containing $\mathcal{O}$ such that $\max_{\mathcal{A}\in \mathcal{O}, \mathcal{A}' \in \mathcal{V}} \mathrm{d}^*_{\mathrm{H}}( \mathcal{A}, \mathcal{A}')< \epsilon$ and the limit 
 \begin{equation}\label{add3.17}
 	\lim\limits_{k\to \infty} 	\mathbb{P}\big(  ( \overline{ 2^{-k}\cdot  \mathfrak{C}})^{>\epsilon}_{\mathbb{B}} \in \mathcal{V}  \big) 
 \end{equation}
exists. As in the proof of \cite[Theorem 6]{kozma2007scaling}, the convergence in Theorem \ref{thm1} then follows from the standard compactness-uniqueness argument: the compactness of $\mathfrak{X}^{\mathrm{c}}_n$ guarantees the existence of subsequential limits, while (\ref{add3.17}) ensures their uniqueness. We omit further details.


   In what follows, we provide a heuristic overview of the proof of Proposition \ref{prop_1.2} (a rigorous implementation of this strategy will be presented in Section \ref{section3_proof_prop1.2}). 
   At a high level, our goal is to compare the macroscopic loop clusters on $\delta\cdot \mathbb{Z}^d$ and $\frac{\delta}{2}\cdot \mathbb{Z}^d$. According to the heuristic picture described in \cite{werner2020clusters}, macroscopic loops form the skeletons of the clusters, while microscopic loops connect different components of these skeletons. To implement this picture, we use geometric killing to decompose the loop soup into two parts. The loops that survive the geometric killing, which we refer to as ``small loops'', form a loop soup associated with a massive GFF. Among the killed loops, we call those whose diameters exceed a certain threshold ``large loops''; as for the remaining killed loops, we will prove in Section \ref{section_control_massive} that their influence is negligible. Since the large loops converge to the Brownian loop soup, the main issue is to understand the scaling limit of the connectivity induced by the small loops. The isomorphism theorem then allows us to reformulate the convergence of the connectivity induced by these loops as the convergence of connecting probabilities for the sign clusters of the massive GFF. To establish the latter convergence, we employ the isometric interpolation scheme introduced in \cite{kozma2007scaling}: we refine $\delta\cdot \mathbb{Z}^d$ patch by patch until the entire graph is transformed into $\frac{\delta}{2}\cdot \mathbb{Z}^d$. Roughly speaking, at each step, we replace the lattice inside a small box by a lattice with half the mesh size and adjust the conductances inside the box so that the associated random walk retains the same Brownian scaling limit (more details will be provided in Section \ref{section_preliminaries}). The main task is to control the error incurred at each step. In what follows, we examine the sources of these errors in more detail and outline how they can be controlled.

   \textbf{Part I: Fluctuations in the coupling between large loops.} Suppose that the graph is modified inside $B(x,\delta^{\beta}):=B(\{x\},\delta^{\beta})$, where $\delta$ is the mesh size and $\beta\in (0,1)$ is a small parameter to be determined later. Loops that do not enter this ball remain unchanged. By choosing the diameter threshold for large loops appropriately, a large loop that enters $B(x,\delta^{\beta})$ will also cross the annulus $B(x,\delta^{\beta'})\setminus B(x,\delta^{\beta})$ (where $\beta'\in(0,\beta)$ will be determined later). For such loops, we construct a coupling between the loop soups before and after the modification such that with high probability, each pair of corresponding loops coincide outside $B(x,\delta^{\beta})$ and their Hausdorff distance is $o(\delta^{\beta})$ (see Lemma \ref{lemma_coupling_crossingloop} below). It remains to estimate the probability that the local modification alters the connectivity relation among the large loops. In fact, the probability of having a loop and a disjoint loop cluster, both crossing the annulus $B(x,\delta^{\beta'})\setminus B(x,\delta^{\beta})$, is $O(\delta^{(\beta-\beta')(d+c_{\dagger})})$, where $c_{\dagger}>0$ is a universal constant (we will return to the proof of this estimate later). Since $\mathbb{B}$ can be covered by $O(\delta^{-d\beta})$ balls of radius $\delta^{\beta}$, the total probability of these exceptional events is $O(\delta^{a_{\dagger}})$ with $a_{\dagger}:= c_\dagger(\beta-\beta')- d\beta$ (where we choose the parameters $\beta$ and $\beta'$ such that $a_{\dagger}>0$). This estimate implies that except on an event of vanishing probability, regardless of how the portion of large loops inside $B(x,\delta^{\beta})$ change under the local graph modification, every loop cluster crossing $B(x,\delta^{\beta'})\setminus B(x,\delta^{\beta})$ intersects all large loops crossing the same annulus. In other words, all loop clusters potentially affected by the modification remain connected to the large loops crossing the annulus $B(x,\delta^{\beta'})\setminus B(x,\delta^{\beta})$; in particular, the connectivity relations among the large loops are preserved. In conclusion, the error arising from the fluctuations of the large loops throughout the interpolation vanishes polynomially as $\delta \to 0$.

       We now give a heuristic explanation of the key estimate used in the preceding discussion. Precisely, for $N> n \ge 1$, we define $\mathsf{F}(N,n)$ as the event that there exist a loop and a loop cluster that are disjoint and both cross $B(N)\setminus B(n)$ (where $B(r):=B(\bm{0},r)$). Then for any $3\le d\le 5$, there exists a constant $c_\dagger>0$ such that  
       \begin{equation}\label{ineq_1.8}
       	\mathbb{P}\big( \mathsf{F}(N,n)  \big) \lesssim  \big( n/N \big)^{d+c_\dagger}, 
       \end{equation}
       where ``$f \lesssim g$'' means that $f\le Cg$ holds for some constant $C>0$ depending only on $d$. To see this, recall that the probability of having a loop $\ell$ (resp. a loop cluster $\mathcal{C}$) crossing $B(N)\setminus B(n)$ is of order $(n/N)^{d-2}$ (resp. $(n/N)^{\frac{d}{2}-1}$). Note that this crossing loop includes two random walk trajectories $\eta_1$ and $\eta_2$ starting from $\partial B(n)$ and stopped upon hitting $\partial B(N)$. In addition, the crossing loop cluster has a uniformly positive probability to block such a random walk at each dyadic scale. As a result (see Lemma \ref{lemma_cluster_dense} below), for some small constant $c_\dagger>0$, with probability $1-O((n/N)^{d+1})$ the cluster is sufficiently dense (we denote this event by $\mathsf{A}$) that any random walk from $\partial B(n)$ to $\partial B(N)$ avoids it with probability at most $(n/N)^{c_{\dagger}}$. Moreover, the estimates in \cite{cai2025heterochromatic} show that the annealed probability of this avoidance event is $O((n/N)^{3-\frac{d}{2}})$. To sum up, we obtain the desired bound (\ref{ineq_1.8}) (see Lemma \ref{lemma_oneloop_onecluster} for an actual proof):
       \begin{equation}\label{ineq_1.9}
       	\begin{split}
       		\mathbb{P}\big( \mathsf{F}(N,n)  \big) \le  &	\mathbb{P}\big( \mathsf{F}(N,n), \mathsf{A}   \big) + 	\mathbb{P}\big( \mathsf{A}^c \big)  \\
       		\lesssim  &  (\tfrac{n}{N})^{\frac{3d}{2}-3}\cdot \mathbb{P}\big( (\eta_1\cup \eta_2)\cap \mathcal{C}=\emptyset , \mathsf{A}   \big)  +(\tfrac{n}{N})^{d+1}\\
       		\lesssim & (\tfrac{n}{N})^{\frac{3d}{2}-3}\cdot (\tfrac{n}{N})^{c_{\dagger}} \cdot \mathbb{P}\big(   \eta_1  \cap \mathcal{C}=\emptyset   \big)  +(\tfrac{n}{N})^{d+1}  \\
       		\lesssim &    (\tfrac{n}{N})^{\frac{3d}{2}-3}\cdot  (\tfrac{n}{N})^{c_{\dagger}}\cdot (\tfrac{n}{N})^{3-\frac{d}{2}} + (\tfrac{n}{N})^{d+1} \lesssim (\tfrac{n}{N})^{d+c_{\dagger}}. 
       	\end{split}
       \end{equation} 
 A similar argument also shows that the triple-crossing probability satisfies the bound in (\ref{ineq_1.8}). Specifically, for any $N>n\ge 1$, let $\mathsf{G}(N,n)$ denote the event that there exist three disjoint loop clusters crossing the annulus $B(N)\setminus B(n)$. In fact (see (\ref{bound_crossing_by_hitting}) below), if a crossing cluster is sufficiently dense such that $\mathsf{A}$ occurs, then the probability of having another crossing cluster within its complement is at most    \begin{equation}
 	 (\tfrac{n}{N})^{c_\dagger} \cdot \mathbb{P}\big( B(n) \xleftrightarrow{} \partial B(N) \big) \asymp (\tfrac{n}{N})^{\frac{d}{2}-1+c_
 	 \dagger},
 \end{equation} 
  where we used the crossing probability estimates in \cite[Theorem 1.2]{cai2024one}.  
  Meanwhile, it was shown in \cite{cai2025heterochromatic} that the probability of having two crossing clusters is proportional to $(n/N)^{\frac{d}{2}+1}$. Therefore, similar to (\ref{ineq_1.9}), we have (see Lemma \ref{lemma_threecluster} below)
 \begin{equation}\label{ineq1.11}
 	\mathbb{P}\big( \mathsf{G}(N,n)  \big) \lesssim  (\tfrac{n}{N})^{\frac{d}{2}-1+c_{\dagger}} \cdot (\tfrac{n}{N})^{\frac{d}{2}+1} + (\tfrac{n}{N})^{d+1} \lesssim  (\tfrac{n}{N})^{d+c_{\dagger}}.  
 \end{equation}

   \textbf{Part II: Errors in the connectivity between large loops.} Suppose that we perform the same graph modification inside $B(x,\delta^{\beta})$. The case when a large loop intersects $B(x,\delta^{\beta})$ has been discussed, so we now turn to the remaining case when such a loop is absent. In this case, we explore the sign clusters of the massive GFF containing the large loops, with the exploration restricted to the complement of $B(x,\delta^\beta)$. By the isomorphism theorem, these sign clusters are measurable with respect to the occupation field outside $B(x,\delta^\beta)$. As in Part I, we apply the coupling in Lemma \ref{lemma_coupling_crossingloop} under which with high probability, the occupation field of small loops is unchanged outside $B(x,\delta^\beta)$. As a result, the corresponding sign clusters coincide before and after the graph modification. At this stage, the graph modification can only affect the connectivity probabilities among the sign clusters that have not been fully explored, namely, those intersecting $B(x,\delta^\beta)$. In particular, if there is only one such partial cluster, the modification has no effect. On the other hand, by (\ref{ineq1.11}), the probability of having at least three such sign clusters is $O(\delta^{(\beta-\beta')(d+c_{\dagger})})$ and thus, the total error arising from this exceptional event is at most of order $\delta^{a_{\dagger}}$ (recall that $a_{\dagger} = c_{\dagger}(\beta-\beta')-d\beta>0$). The remaining case (i.e., there are exactly two partial clusters) can be reduced to the following question: for any $N\ge 1$ and two connected subsets $D_1,D_2\subset [B(N)]^c$, given the absolute values of $\phi$ on $D_1\cup D_2$ with non-zero values only on $\partial B(N)$, how to estimate the change in the connecting probability $\mathbb{P}(D_1\xleftrightarrow{} D_2)$ under the graph modification inside $B(\frac{N}{2})$?


     As shown in Lemma \ref{newlemma2.3} below, an analysis similar to that in \cite[Theorem 2]{werner2025switching} yields an explicit relation between the aforementioned connection probability and its counterpart obtained by forcing $\phi$ to have sign ``$+$'' on both $D_1$ and $D_2$. Using the Lupu-Werner formula in \cite{lupu2018random}, the latter probability can be approximated by 
     \begin{equation}
     	   \sum\nolimits_{v_1\in   \partial D_1 ,v_2\in \partial D_2  } 
     \mathbb{K}_{D_1\cup D_2}(v_1,v_2)\phi_{v_1}\phi_{v_2}  
     \end{equation} 
where $\mathbb{K}_{D_1\cup D_2}(\cdot, \cdot )$ denotes the  boundary excursion kernel (on the metric graph) for the set $D_1\cup D_2$ (it can be considered as the total mass of excursions from $v_1$ to $v_2$ without hitting $D_1\cup D_2$; see (\ref{def_K}) for its definition). The local modification in $B(\frac{N}{2})$ can only affect the excursions that enter
$B(\frac{N}{2})$. However, thanks to the isometric interpolation construction
(see \cite[Section 5]{kozma2007scaling}), the total mass of such excursions changes by at most a factor of order $N^{-c_*}$ for some universal constant $c_*>0$. Combining these observations, we conclude that the graph modification inside $B(\frac{N}{2})$ introduces only a polynomially small relative error in $N$ to $\mathbb{P}(D_1\xleftrightarrow{}D_2)$. Returning to the preceding analysis, this implies that the graph modification inside $B(x,\delta^{\beta})$ changes the connecting probability between the two partial sign clusters by $O(\delta^{c_*(1-\beta)})$. The resulting total error is therefore $O(\delta^{c_*(1-\beta)-d\beta})$, where we require $\beta<\frac{c_*}{c_*+d}$ such that the exponent here is positive. In conclusion, the error in the connection between the large loops throughout the interpolation also vanishes polynomially as $\delta \to 0$.

 To summarize, Parts I and II outline the proof of Proposition \ref{prop_1.2}; their details will be carried out in Cases 1 and 2 in Section \ref{section3_proof_prop1.2} respectively.




\vspace{0.2cm}

\textbf{Convention for constants.} In this paper, $C$ and $c$ (sometimes with subscripts or superscripts) denote positive constants, with $C$ reserved for large constants and $c$ for small ones. In particular, we always require $C>1$ and $c<1$. Numerically indexed constants, such as $C_1,C_2,c_1,c_2,...$, remain fixed throughout the paper, whereas unindexed constants may change from line to line. Unless stated otherwise, all constants depend only on the dimension $d$. Any additional dependence will be indicated explicitly in parentheses.








{\color{blue} 






}

     {\color{blue}



     }

  {\color{violet}







  }

{\color{red} 

 
}

\section{Preliminaries}\label{section_preliminaries}

In this section, we introduce some basic notation and collect some fundamental properties that will be used later.

 \subsection{Basic notions for graphs}

 
 A weighted graph $\mathbf{G}$ consists of a countable vertex set $\mathbf{V}$, a cemetery state $\mathfrak{c}$, symmetric weights $w: \mathbf{V} \times \mathbf{V}\to [0,\infty)$, and a non-negative killing weight $\kappa: \mathbf{V} \to [0,\infty)$. For convenience, we also write $w(x,\mathfrak{c})=\kappa_x$ for all $x\in \mathbf{V}$. A continuous-time random walk $\{X_t\}_{t\ge 0}$ on the weighted graph evolves as follows. For any $x\in \mathbf{V}$, $y\in \mathbf{V}\cup \{\mathfrak{c}\}$ and $t\ge 0$, 
  \begin{equation*} 
  \mathbb{P}\big( X_{t+\Delta t}  =y \mid X_t =x \big) = \left\{ \begin{aligned} & w(x,y)\Delta t+ o(\Delta t) &  \text{if}\   x\neq y;   \\ & 1- \lambda_x \Delta t + o(\Delta t)  & \text{if}\ x= y. \end{aligned} \right. 
  \end{equation*} 
Here $\lambda_x:=\kappa_x+\sum_{y\in \mathbf{V}:y\neq x}w(x,y)$ is the total weight at $x$. Once $X_\cdot$ enters the cemetery state $\mathfrak{c}$, it remains there forever. The edge set of $\mathbf{G}$ is denoted by $\mathbf{E}:=\{\{x,y\}:x,y\in \mathbf{V}, w(x,y)>0\}$.

For any $x\in \mathbf{V}$, we denote by $\mathbb{P}_x$ the law of $X_\cdot$ starting from $x$, and by $\mathbb{E}_x$ the expectation under $\mathbb{P}_x$. For any $A\subset \mathbf{V}\cup \{\mathfrak{c}\}$, let 
$$\tau_{A}=\tau_A(X_{\cdot}):= \inf\{t\ge 0: X_t\in A\}$$ denote the first hitting time of $A$ by $X$, with the convention that $\inf \emptyset = +\infty $. Especially, we write $\tau_{\{\mathfrak{c}\}}$ as $\zeta$.


\textbf{Metric graph.} We now describe the construction of a metric graph from a weighted graph. Arbitrarily take a weighted graph $\mathbf{G}$. For each edge $\{x,y\}\in \mathbf{E}$, we assign a compact interval $I_{\{x,y\}}$ of length $|I_{\{x,y\}}|:=[2w(x,y)]^{-1}$ with endpoints identified with $x$ and $y$. In addition, for each $x\in \mathbf{V}$, we assign a ray whose starting point is identified with $x$, and denote by $\mathfrak{c}_x$ the point on this ray such that the length of the interval between $x$ and $\mathfrak{c}_x$ is $[2\kappa(x)]^{-1}$ (when $\kappa(x)=0$, we set $\mathfrak{c}_x=\emptyset$). We then define the metric graph $\widetilde{\mathbf{G}}$ as the union of these intervals and rays, glued at their common endpoints. For $\{x,y\}\in \mathbf{E}$ and $v_1,v_2\in I_{\{x,y\}}$, let $I_{[v_1,v_2]}$ denote the sub-interval of $I_{\{x,y\}}$ with endpoints $v_1$ and $v_2$. Given the embedding $\iota:\mathbf{V}\to \mathbb{R}^d$, each interval $I_{\{x,y\}}$ is identified with the line segment in $\mathbb{R}^d$ connecting $\iota(x)$ and $\iota(y)$; in addition, $v\in I_{\{x,y\}}$ corresponds to $(1-\frac{|I_{[x,v]}|}{|I_{\{x,y\}}|})\cdot \iota(x)+ \frac{|I_{[x,v]}|}{|I_{\{x,y\}}|}\cdot \iota(y)$ in $\mathbb{R}^d$.

    The canonical diffusion on $\widetilde{\mathbf{G}}$, denoted by $\{\widetilde{X}_t\}_{t\ge 0}$, is a Markov process on $\widetilde{\mathbf{G}}$ that behaves as standard one-dimensional Brownian motion inside each interval. Upon hitting a vertex in $\mathbf{G}$, it chooses one of the incident intervals uniformly and then continues as a Brownian excursion along that interval. In addition, it stops upon hitting $\{\mathfrak{c}_x\}_{x\in \mathbf{V}}$. In fact, the behavior of $\widetilde{X}_{\cdot}$ on $\widetilde{\mathbf{G}}$ is similar to that of the random walk $X_{\cdot}$ on $\mathbf{G}$. Precisely, restricted to $\mathbf{V}$, the transition probabilities of $\widetilde{X}_{\cdot}$ coincide with those of $X_{\cdot}$. Moreover, the holding time of $X_{\cdot}$ at a vertex $x$ before it jumps to one of its neighbors and the analogous quantity for $\widetilde{X}_{\cdot}$---the total local time accumulated at $x$ before it first hits a neighbor of $x$---are both exponential random variables with parameter $\lambda_x$.

(\textbf{P.S.} Compared to the definition of $\widetilde{\mathbb{Z}}^d$ in Section \ref{section_intro}, the present construction attaches an additional infinite ray to each vertex. However, these rays do not affect the law of the loop soup within the intervals joining the vertices. We introduce these rays solely to construct a massive loop soup as a deterministic subcollection of the full loop soup; the construction will be described in detail in Section \ref{subsection_loop_soup}.)


 When $\{\widetilde{X}_t\}_{t\ge 0}$ starts from $v \in \widetilde{\mathbf{G}}$ (i.e., $\widetilde{X}_0=v$), we denote its law by $\widetilde{\mathbb{P}}_v$. The expectation under $\widetilde{\mathbb{P}}_v$ is written as $\widetilde{\mathbb{E}}_v$. For any $D \subset \widetilde{\mathbf{G}}$, we define the first hitting time $\widetilde{\tau}_{D}=\widetilde{\tau}_{D}(\widetilde{X}_{\cdot}):= \inf\{t\ge 0: \widetilde{X}_t\in D\}$. When $D=\{\mathfrak{c}_x\}_{x\in \mathbf{V}}$, we write $\widetilde{\tau}_{D}$ as $\widetilde{\zeta}$.

\subsection{Isotropic interpolation}\label{subsection_isometric_graph}

In this subsection, we record some notation for the isometric interpolation scheme introduced in \cite{kozma2007scaling}.

 \textbf{Interpolation between $\mathbb{Z}^d$ and $2\mathbb{Z}^d$.} Assume that $L>10^{2d}$ and $M\le L^{1/9}$. Next, we construct a family of graphs $\mathbf{G}=\mathbf{G}(L,M,\xi)$ for $\xi\in \{1,2\}^{Q_M}$, where $Q_M:=\{-(M-1),-(M-2),...,M-1\}^d$. For each $(x_1,...,x_d)\in Q_M$ such that $\xi(x_1,...,x_d)=1$, we declare every point in $\mathbb{Z}^d\cap ([Lx_1,Lx_1+L)\times ... \times [Lx_d,Lx_d+L))$ to be a vertex of $\mathbf{G}$. These vertices are called vertices of type $1$. For $(x_1,...,x_d)\in Q_M$ such that $\xi(x_1,...,x_d)=2$, we declare every point in $(2\cdot \mathbb{Z}^d)\cap ([Lx_1,Lx_1+L)\times ... \times [Lx_d,Lx_d+L))$ to be a vertex of $\mathbf{G}$, and call it a vertex of type $2$. The type-$1$ and type-$2$ vertices together form the vertex set of $\mathbf{G}$. The weights $\omega$ are defined as follows. Each pair of vertices of type $1$ (resp. $2$) with Euclidean distance $1$ (resp. $2$) has weight $1$ (resp. $2^{d-2}$). When $v$ is type $1$ and $w$ is type $2$ (suppose that $x$ is the vertex of type $1$ closest to $w$), if $\eta=(\eta_1,...,\eta_d):=w-x$ is contained in $\{-1,0,1\}^d$, then we define $\omega(v,w):= 2^{-\sum_{1\le i\le d}|\eta_i| }$;
 otherwise, we set $\omega(v,w):=0$. When $\xi \equiv 1$, $\mathbf{G}=\mathbb{Z}^d$. When $\xi \equiv  2$, $\mathbf{G}$ is identical to $2\cdot \mathbb{Z}^d$ inside $[-LM,LM)^d$.

We refer to the graphs constructed above as interpolating graphs. We will also need variants of these graphs with killing. Arbitrarily fix $\nu>0$. For each interpolating graph $\mathbf{G}$, let $\mathbf{G}^{\nu}$ denote the graph obtained from $\mathbf{G}$ by assigning to each vertex $x\in\mathbf{V}$ the killing weight 
\begin{equation}
	\kappa(x)= \frac{\nu}{d}\sum\nolimits_{y\in \mathbf{V}:y\neq x} \omega(x,y)|x-y|^2,
\end{equation}
 while keeping the vertex set and weights unchanged. Using an argument similar to that in \cite[Section 6]{kozma2007scaling}, one can show that the random walk on $\mathbf{G}^{\nu}$ behaves similarly to Brownian motion with killing. To state this result precisely, we first introduce some notation. For any $x\in \mathbf{V}$ and $r>0$, let $A$ be a $(d-1)$-dimensional spherical simplex in $\partial B(x,r)$. Here a spherical simplex is the intersection of the sphere with a polyhedral cone generated by finitely many linearly independent vectors contained in a common open half-space. Define $|A|_{x,r}:=\mathrm{vol}_{d-1}(A)/\mathrm{vol}_{d-1}(\partial B(x,r))$ as the normalized $(d-1)$-dimensional volume of $A$, where $\mathrm{vol}_k(\cdot)$ denote the $k$-dimensional Hausdorff measure. We then define two discrete approximations of $A$. We denote the discrete Euclidean ball by $\hat{B}(x,r):=\{y \in \mathbf{V}: y  \in B(x,r) \}$. For any $D\subset \mathbf{V}$, we denote its external boundary by $\hat{\partial}D:=\{y\in D^c: \exists z\in D\ \text{such that}\ \{y,z\}\in \mathbf{E} \}$. We write $\tau_{x,r}:= \tau_{ \hat{\partial} \hat{B}(x,r)}$. Let $A^+$ be the collection of vertices $y\in \hat{\partial}\hat{B}(x,r)$ such that some edge incident to $y$ intersects $A$. In addition, let $A^-$ be the subset of $A^+$ consisting of vertices $y$ such that every edge $\{y,z\}$ with $z\in \hat{B}(x,r)$ intersects $A$.



 \begin{lemma}\label{lemma2.1_isometric}
 	 For any $\Lambda>0$ and interpolating graph $\mathbf{G}^{\nu}$ with $\nu\in [0,\Lambda r^{-2}]$, there exists $C(\Lambda)>0$ such that for any $x, r, A$ and $\diamond \in \{+,-\}$, 
 	 \begin{equation}
	\big|  \mathbb{P}_{x}\big( X_{\tau_{x,r}}  \in A^{\diamond},\tau_{x,r}< \zeta  \big)  - s(\nu) |A|_{x,r} \big|\le Cr^{-\frac{1}{5}}, 
    \end{equation}
   where $s(\nu)$ is the probability that a Brownian motion on $\mathbb{R}^d$ starting from the origin exits the unit ball before an independent exponential time with rate $\nu$.
\end{lemma}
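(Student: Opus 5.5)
We follow the strategy of \cite[Section 6]{kozma2007scaling} and adapt it to the killed walk, using a martingale argument driven by the isotropy of the interpolating graph. The key algebraic input is the design of the weights $\omega$. At every vertex $y$ of $\mathbf{G}$, type $1$ or $2$ and including vertices adjacent to the interface between patches of different types, the drift of the walk vanishes and the jump covariance is isotropic:
\begin{equation*}
\sum\nolimits_{z} \omega(y,z)(z-y)=0,\qquad \sum\nolimits_{z} \omega(y,z)(z-y)(z-y)^{\mathsf T}=\tfrac{\lambda'_y}{d}\, I_d ,
\end{equation*}
where $\lambda'_y:=\sum_z \omega(y,z)|z-y|^2$. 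We verify these identities directly from the definition of $\omega$ (this is the content of the isometric construction in \cite[Section 5]{kozma2007scaling}). The killing rate was chosen as $\kappa(y)=\frac{\nu}{d}\lambda'_y$, so for any smooth $f$ the generator of the killed walk satisfies
\begin{equation*}
\mathcal{G}f(y)=\tfrac{\lambda'_y}{2d}\big(\Delta f(y) - 2\nu f(y)\big)+O\big(\lambda'_y\|D^3 f\|_\infty\big).
\end{equation*}
In other words, up to a time change by $\lambda'_y/d$, the walk looks like Brownian motion killed at rate $\nu$. The time change is harmless here because the killing rate carries the same factor, so exit distributions and survival probabilities are invariant under it.

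Step 1 is to construct the test functions. Let $h$ solve $\frac12\Delta h=\nu h$ in $B(x,r)$, with boundary data a mollified indicator of $A$ at angular scale $r^{-a}$, where $a>0$ is small. Then $h(x)=\mathbb{E}^{\rm BM}_x[\text{boundary data at the exit point};\ \text{exit before killing}]$, and this equals $s(\nu)|A|_{x,r}+O(r^{-a})$ by rotation invariance together with the regularity of a spherical simplex, whose boundary neighbourhoods have small measure. Since $\nu r^2\le\Lambda$, the interior derivative bounds $\|D^k h\|\lesssim_\Lambda r^{-k}r^{ak}$ hold on $B(x,r-r^{1-a})$. We choose a sandwich of two such boundary data, one dominating $\mathbbm{1}_{A^+}$ and one dominated by $\mathbbm{1}_{A^-}$; this handles $\diamond\in\{+,-\}$ simultaneously, because $A^\pm$ differ from $A$ only within distance $O(1)$ of $\partial A$.

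Step 2 is optional stopping. The process $h(X_{t\wedge\tau})\mathbbm{1}_{t<\zeta}-\int_0^{t\wedge\tau}\mathcal{G}h(X_s)\,\mathrm ds$ is a martingale, where we stop at the exit $\tau'$ of the slightly smaller ball $B(x,r-r^{1-a})$. The accumulated error is controlled by $\mathbb{E}\int_0^{\tau'}\lambda'_{X_s}\,\mathrm ds\cdot r^{-3+3a}$. The standard $L^2$ martingale for $|X_t-x|^2$ gives $\mathbb{E}\int_0^{\tau'}\lambda'_{X_s}\,\mathrm ds\lesssim r^2$, so this error is $O(r^{-1+3a})$.

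Step 3, which we expect to be the main obstacle, is the boundary layer. From $\partial B(x,r-r^{1-a})$ the walk must reach $\hat\partial\hat B(x,r)$ without drifting angularly by more than about $r^{1-a}\cdot r^{a/2}$, and it must survive, which costs only $O(\nu r^{2-a})$. We handle this with a gambler's-ruin estimate applied to the radial martingale $|X_t-x|$, which is approximately harmonic. This shows that the walk exits within time $O(r^{2-a})$ except with small probability, and Doob's maximal inequality then controls the angular displacement. These estimates are uniform across patch interfaces since they use only the moment identities above. Choosing $a$ to balance the errors $r^{-a}$, $r^{-1+3a}$ and $r^{-a/2}$ yields the rate $Cr^{-1/5}$, with the constant depending on $\Lambda$ only through the derivative bounds for $h$.
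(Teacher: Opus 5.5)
Your overall scheme (a harmonic-function approximation, optional stopping, and a boundary-layer estimate) is a sound framework for a statement of this type. The gap is in what you call the key algebraic input. The step covariance of the interpolating walk is \emph{not} isotropic at vertices adjacent to the faces of the $L$-boxes, and no reading of the weights $2^{-|\eta|_1}$ with $\eta\in\{-1,0,1\}^d$ can make it so.

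Take $d=3$, a face $\{x_1=0\}$ with the type-$2$ box on the side $x_1\ge 0$, and the type-$1$ vertex $v=(-1,1,0)$.
\begin{itemize}
\item Its type-$1$ edges contribute $-e_1$ to the drift and $\mathrm{diag}(1,2,2)$ to $\Sigma_v:=\sum_z\omega(v,z)(z-v)(z-v)^{\mathsf T}$.
\item Every type-$2$ vertex $w$ attached to $v$ has $w_1=0$, hence $(w-v)_1=1$. Zero drift in direction $e_1$ therefore forces total type-$2$ weight $1$, which pins the $(1,1)$ entry of $\Sigma_v$ at $2$.
\item Since $w_2$ is even, $|(w-v)_2|\ge 1$, so the $(2,2)$ entry is at least $3$.
\end{itemize}
So your two identities cannot both hold at $v$. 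With the natural weights ($\tfrac12$ to each of $(0,0,0)$ and $(0,2,0)$) one gets $\Sigma_v=\mathrm{diag}(2,3,2)$. Similarly, the type-$2$ face vertex $(0,0,0)$ gets $\mathrm{diag}(12,18,18)$ instead of the bulk value $16I$. At box edges even the zero-drift identity should be rechecked.

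Consequently $\mathcal{G}h(y)$ contains the extra term $\tfrac12\operatorname{tr}\big((\Sigma_y-\tfrac{\lambda'_y}{d}I)D^2h(y)\big)$ on an $O(1)$-neighbourhood $\mathcal{I}$ of the box faces. This term is of order $|D^2h|$, not $|D^3h|$. The radial martingale in Step 3 has the same defect. Your remark that the estimates are ``uniform across patch interfaces since they use only the moment identities'' therefore has nothing behind it.

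What is missing is a bound on $\mathbb{E}_x\int_0^{\tau'}\mathbbm{1}\{X_s\in\mathcal{I}\}\,|D^2h(X_s)|\,\mathrm{d}s$, uniform in $x$ and $\xi$. This is where the structure of the interpolating graphs must enter, and it is what any adaptation of \cite[Section 6]{kozma2007scaling} (to which the paper defers) has to handle.
\begin{itemize}
\item For $r\lesssim L$: using $G_{B(x,r)}(x,y)\lesssim |x-y|^{2-d}$, you get $O(r^{-1}\log r)$, since the ball meets only $O(1)$ face pieces.
\item For $L\lesssim r\lesssim LM$: the walk can spend a fraction $\asymp L^{-1}$ of its time in $\mathcal{I}$. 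For anisotropic patterns $\xi$ (slabs, half-spaces) the face contributions need not cancel, so this term can be of order $L^{-1}$.
\item For $r\gg LM$: the term is $O\big((LM)^2L^{-1}r^{-2}+r^{-1}\log r\big)$, because faces exist only in $[-LM,LM)^d$.
\end{itemize}
A uniform bound $Cr^{-1/5}$ thus requires $M$ to be polynomially bounded in $L$ (for this term $LM\lesssim L^{5}$ would do). The standing assumption $M\le L^{1/9}$ supplies this. Your argument never uses $L>10^{2d}$ or $M\le L^{1/9}$: it would prove the lemma for any zero-drift isotropic walk, and the interpolating walk is not one.

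Two smaller points.
\begin{itemize}
\item Your errors $r^{-a}$, $r^{-1+3a}$, $r^{-a/2}$ balance at $r^{-1/7}$, not $r^{-1/5}$. Bound the Taylor remainder shell by shell instead: from the centre, the walk spends time $\asymp t^2$ at distance in $[t,2t)$ from $\partial B(x,r)$, where $|D^3h|\lesssim t^{-3}$. This improves $r^{-1+3a}$ to $r^{-1+a}$, which is enough.
\item Rotation invariance gives $h(x)=\mathbb{E}[e^{-\nu\tau}]$ times the spherical average of the boundary data, with $\tau$ the exit time of $B(x,r)$. That is $s(\nu r^2)$ when $s$ is defined through the unit ball, so the $s(\nu)$ in the statement must be read at scale $r$.
\end{itemize}
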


 Lemma \ref{lemma2.1_isometric} can be derived using the arguments in \cite[Section 6]{kozma2007scaling} (see \cite{technicalpaper} for a detailed proof). As shown in \cite[Section 3]{kozma2007scaling}, one can further show that the random walk on $\mathbf{G}^{\nu}$ satisfies the standard estimates for its Green's function, hitting probabilities of a ball, and escape probabilities from a half-space, and admits a strong coupling with Brownian motion. We record the following analogue of \cite[Lemma 5.1]{kozma2007scaling} (being a straightforward adaption, its proof will be given in \cite{technicalpaper}).

 \begin{lemma}\label{lemma_hitting_error}
 	  Let $\mathsf{\mathbf{G}}_1^\nu$ and $\mathsf{\mathbf{G}}_2^\nu$ be two interpolating graphs that differ only inside the ball $B(x,r)$, where $x\in \mathbb{R}^d$, $r\ge 1$ and $0\le \nu \le  r^{-2}$. Then there exists $\cl\label{const_hitting}>0$ such that for any $y\in \hat{\partial}\hat{B}(x,2r)$, $A\subset [B(x,4r)]^c$ and $z\in A$, 
 	  \begin{equation}
 	  	 |p_1-p_2|\lesssim  r^{-\cref{const_hitting}}\cdot \max\{p_1,p_2\},
 	  \end{equation}
 	  where for each $j\in\{1,2\}$, $p_j$ denotes the probability of $\{\tau_{A}=\tau_{z}<\infty\}$ on $\mathsf{\mathbf{G}}_j^\nu$.

 	 
 \end{lemma}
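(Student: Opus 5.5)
The plan is to follow the proof of \cite[Lemma 5.1]{kozma2007scaling}, adapted to killing. Two facts drive it. First, the two walks agree outside $B(x,r)$. Second, Lemma \ref{lemma2.1_isometric} says the exit distribution from balls inside the modified region is the same on both graphs up to a polynomial error. Throughout, write $h_j(u):=\mathbb{P}^{(j)}_u(\tau_A=\tau_z<\infty)$ for the hitting function on $\mathbf{G}_j^\nu$.

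\textbf{Step 1 (decomposition at the last visit).} Start from $y\in\hat\partial\hat B(x,2r)$. Under both laws the walk either never enters $\hat B(x,r)$ before $\tau_A$, or it enters at some point, makes an excursion inside $\hat B(x,2r)$, exits, and then restarts. On paths that avoid $B(x,r)$ the two laws coincide, since $A\subset[B(x,4r)]^c$ and the weights and killing agree outside $B(x,r)$. So $p_1-p_2$ arises only from excursions through $B(x,r)$. Each such excursion is a walk from a point of $\hat B(x,r)$ to its first exit of the ball $\hat B(x,2r)$, after which the evolution is identical on the two graphs until the next entrance. The killing rate satisfies $\nu\le r^{-2}$, so killing inside $B(x,2r)$ changes survival probabilities only by a factor that is $O(1)$ and the same up to errors. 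This puts us in the regime of Lemma \ref{lemma2.1_isometric} with $\Lambda=4$.

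\textbf{Step 2 (Harnack and exit measure comparison).} The function $h_j$ is harmonic for the killed walk in $B(x,4r)\setminus A$, and it is the same harmonic extension on both graphs outside $B(x,r)$. Standard estimates for interpolating graphs (the Green's function bounds and Harnack inequality recorded after Lemma \ref{lemma_hitting_error}, from \cite[Section 3]{kozma2007scaling}) show that $h_j$ on $\hat\partial\hat B(x,2r)$ is comparable to $h_j(y)$. They also give a H\"older modulus: $|h_j(u)-h_j(u')|\lesssim (|u-u'|/r)^{c}\max h_j$ on $\partial B(x,2r)$.

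Next, partition $\partial B(x,2r)$ into spherical simplices of diameter $r^{1-\gamma}$ for a small $\gamma>0$. Lemma \ref{lemma2.1_isometric} then shows that, started from any point of $\hat B(x,r)$, the exit distributions of the two walks from $\hat B(x,2r)$ agree on each simplex up to $Cr^{-1/5}$. There are about $r^{(d-1)\gamma}$ simplices, so the total variation error is of order $r^{(d-1)\gamma-1/5}$. The $A^+$ versus $A^-$ sandwiching handles the boundary vertices of the simplices. Two points are needed here:
\begin{itemize}
\item The starting point is not the center of the ball. I would handle this with the strong Markov property on a smaller ball $B(u,r/2)$, and with iteration (or a Poisson kernel estimate) in the manner of \cite{kozma2007scaling}.
\item The exit measure must be integrated against $h_j$. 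For this I would use the H\"older modulus, which contributes a further $r^{-c\gamma}$ error relative to $\max h_j$.
\end{itemize}
Together these give, for one excursion, an error of $r^{-c_1}\max\{p_1,p_2\}$.

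\textbf{Step 3 (summing over excursions).} The number of returns from $\hat\partial\hat B(x,2r)$ into $B(x,r)$ before $\tau_A$ or killing is stochastically dominated by a geometric variable. This holds because from $\partial B(x,2r)$ the walk escapes to distance $4r$ and beyond with uniformly positive probability (transience in $d\ge3$). Summing the per-excursion errors with geometric weights keeps the bound $\lesssim r^{-c_1}\max\{p_1,p_2\}$. An alternative is to write $p_j$ via a last-exit or first-entrance decomposition as $\sum_u G_j(y,u)\,(\cdots)$ and compare the two factors; this handles the geometric sum implicitly.

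\textbf{Main obstacle.} The main obstacle is Step 2: obtaining a \emph{relative} error, i.e.\ bounded by $\max\{p_1,p_2\}$ rather than an additive one, when $A$ is an arbitrary set and $z$ a single point, so that $h_j$ may be tiny. This is why the Harnack and H\"older regularity of $h_j$ on $\partial B(x,2r)$ is needed, and it must hold uniformly in $A$ and $z$. The same argument must also stay uniform in the killing parameter $\nu\in[0,r^{-2}]$.
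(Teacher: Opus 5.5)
Your proposal is correct and takes essentially the same route as the paper. The paper gives no independent argument: it records the lemma as a straightforward adaptation of \cite[Lemma 5.1]{kozma2007scaling} and defers the details to \cite{technicalpaper}. Your outline is exactly that adaptation: decompose $p_1-p_2$ over excursions through $B(x,r)$, compare exit laws from $\hat{B}(x,2r)$ via Lemma \ref{lemma2.1_isometric} iterated over vertex-centred balls to handle off-centre starts, use Harnack and H\"older regularity of $h_j$ on $\partial B(x,2r)$ to get a relative rather than additive error, and sum geometrically over returns, with the killing absorbed since $\nu\le r^{-2}$.
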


\subsection{Properties of loop soups} \label{subsection_loop_soup}

For any metric graph $\widetilde{\mathbf{G}}$, since the canonical diffusion $\widetilde{X}_\cdot$ on $\widetilde{\mathbf{G}}$ has been defined, the loop measure $\mu$ is then given by (\ref{def_1.1}). With a slightly abuse of notation, we write $\mathcal{L}$ as the loop soup on $\widetilde{\mathbf{G}}$ with intensity $\frac{1}{2}$, i.e., the Poisson point process with intensity measure $\frac{1}{2}\mu$.

  \textbf{Restriction property.} For a compact set $D\subset \widetilde{\mathbf{G}}$, we denote by $\mu^D$ the loop measure induced by the diffusion on $\widetilde{\mathbf{G}}$ stopped upon hitting $D$, and by $\mathcal{L}^D$ the corresponding loop soup of intensity $1/2$. By the thinning property of Poisson point processes, $\mathcal{L}^D$ has the same distribution as $\mathcal{L}\cdot \mathbbm{1}_{\mathrm{ran}(\ell)\cap D=\emptyset}$. Here $\mathrm{ran}(\ell)$ represents the range of the loop $\ell$, i.e., the set of points visited by $\ell$. We write $A_1 \xleftrightarrow{(D)} A_2$ as the event that $A_1$ and $A_2$ are connected by some cluster in $\mathcal{L}^D$.

    Using this property, we can extract a massive loop soup from $\mathcal{L}$. Assume that $\mathbf{G}$ and $\mathbf{G}'$ are two weighted graphs with the same vertex set $\mathbf{V}$ and edge weights $\omega$, where $\mathbf{G}$ has no killing, and $\mathbf{G}'$ is equipped with killing weights $\kappa:\mathbf{V}\to[0,\infty)$. Consider their metric graphs 
  $\widetilde{\mathbf{G}}$ and $\widetilde{\mathbf{G}}'$, and denote the corresponding loop soup of intensity $1/2$ by $\mathcal{L}$ and $\mathcal{L}'$ respectively. Recall that $\widetilde{\mathbf{G}}'$ can be obtained from $\widetilde{\mathbf{G}}$ by imposing the absorbing points $\bar{\mathfrak{c}}:=\{\mathfrak{c}_x\}_{x\in \mathbf{V}}$, where each $\mathfrak{c}_x$ lies on the ray attached to $x$ and is at graph distance $[2\kappa(x)]^{-1}$ from $x$. Consequently, we may construct $\mathcal{L}'$ from $\mathcal{L}$ by taking $\mathcal{L}':=\mathcal{L}\cdot \mathbbm{1}_{\mathrm{ran}(\ell)\cap \bar{\mathfrak{c}}=\emptyset}$.

 \textbf{Loop measures of crossing loops.} Let $\mathbf{G}$ be an interpolating graph. Recall that a Brownian motion starting from $x\in \mathbb{R}^d$ hits the unit ball centered at $y\in \mathbb{R}^d$ with probability of order $(|x-y|+1)^{2-d}$. As shown in \cite[Lemma 3.5]{kozma2007scaling}, by the coupling between the random walk on $\mathbf{G}$ and Brownian motion on $\mathbb{R}^d$, this property remains valid for the random walk: for any $r\ge 1$, $x\in \mathbb{R}^d$ and $y\in \mathbf{G}\cap [B(x,2r)]^c$, 
 \begin{equation}\label{ineq_2.15}
 	\widetilde{\mathbb{P}}_y\big( \widetilde{\tau}_{B(x,r)} <\infty \big) \asymp  \big( r/ |y| \big)^{d-2}. 
 \end{equation}
  Combining (\ref{ineq_2.15}) with the argument in \cite[Lemma 2.7]{chang2024percolation}, we have the following bound on the total mass of loops crossing an annulus: for $R>r\ge 1$ and $x\in \mathbb{R}^d$, 
 \begin{equation}\label{ineq_crossing_loop}
 	\mu\big( \{ \ell:  \mathrm{ran}(\ell) \cap \partial B(x,R)\neq \emptyset,    \mathrm{ran}(\ell) \cap B(x,r) \neq \emptyset \} \big) \asymp \big( r/ R \big)^{d-2}. 
 \end{equation}
 As a direct corollary of (\ref{ineq_crossing_loop}), one has 
 \begin{equation}\label{bound2.12}
 	\mu\big( \{ \ell:  \mathrm{ran}(\ell)  \subset  B(x,R),   |\ell|\ge   \tfrac{R}{10} \} \big) \asymp 1.   
 \end{equation}
 Here $|\ell|$ denotes the Euclidean diameter of the loop $\ell$. 
 
  Let $\mathcal{L}$ and $\mathcal{L}^{\nu}$ denote the loop soup of intensity $1/2$ on $\widetilde{\mathbf{G}}$ and $\widetilde{\mathbf{G}}^{\nu}$ respectively. For each loop $\ell$ satisfying the conditions in (\ref{bound2.12}), the probability that the duration $T_{\ell}$ of $\ell$ lies in the interval $[tR^2, (t+1)R^2]$ is at most $e^{-ct}$ (using \cite[Proposition 2.4.5]{Lawler2010RandomWA}); in addition, the probability that $\ell$ is killed on $\widetilde{\mathbf{G}}^{\nu}$ is $O(\nu T_{\ell})$. Consequently, 
  \begin{equation}\label{bound26}
  	\begin{split}
  		& \mathbb{P}\big( \exists \ell \in \mathcal{L}-\mathcal{L}^{\nu}\ \text{such that}\ \mathrm{ran}(\ell)\subset B(x,R), |\ell| \ge \tfrac{R}{10}   \big) \\
  		\lesssim  & \sum\nolimits_{t\in \mathbb{N}} e^{-ct}\cdot \nu (t+1)R^2  \lesssim \nu R^{2}. 
  	\end{split}
  \end{equation}

 \textbf{Crossing paths.} Let $\mathbf{G}_1$ and $\mathbf{G}_2$ be two interpolating graphs that differ only within $x+[0,L)^d$. For $i\in \{1,2\}$, we denote by $\mathcal{L}_i$ the loop soup of intensity $1/2$ on $\widetilde{\mathbf{G}}_i$. Note that the loops in $\mathcal{L}_1$ avoiding $x+[0,L)^d$ have the same distribution as its analogue for $\mathcal{L}_2$. Any loop among the remaining ones that intersects $\partial B(x,10dL)$ must cross the annulus $B(x,5dL)\setminus B(x,2dL)$. As introduced in \cite[Section 2.6.3]{cai2025one}, such a loop can be decomposed into forward and backward crossing paths. Each forward crossing path starts from $\hat{\partial}\hat{B}(x,2dL)$ and stops upon hitting $\hat{\partial}\hat{B}(x,5dL)$, whereas each backward crossing path evolves in the opposite direction. By the spatial Markov property of the loop soup, conditional on the starting and ending points of all these crossing paths, the forward and backward crossing paths are independent. Moreover, a crossing path starting from $y\in\hat{\partial}\hat{B}(x,r)$ and ending at $z\in\hat{\partial}\hat{B}(x,r')$ has distribution  
 \begin{equation}\label{distribution_crossing_path}
 	\widetilde{\mathbb{P}}_y\big( \{\widetilde{X}_t\}_{0\le t\le \widetilde{\tau}_{\hat{\partial}\hat{B}(x,r')}}  \in \cdot  \mid \widetilde{\tau}_{\hat{\partial}\hat{B}(x,r')}=\widetilde{\tau}_{z}<\infty \big). 
 \end{equation}
 Only the forward crossing paths can be affected by the graph modification within $x+[0,L)^d$. Morover, using Lemma \ref{lemma2.1_isometric} and the arguments in \cite[Section 3.5]{kozma2007scaling}, one can show that the total mass of these forward crossing paths (i.e., the probability of $\{\widetilde{\tau}_{\hat{\partial}\hat{B}(x,r')}=\widetilde{\tau}_{z}<\infty\}$) changes by at most a multiplicative factor of $1+O(L^{-c})$. The same approximation remains valid when the graph has a killing rate $\nu \le  L^{-2}$. Furthermore, one can adapt the approach of \cite[Section 3.4]{kozma2007scaling} to couple the forward crossing paths in $\mathcal{L}_1$ and $\mathcal{L}_2$ such that for some constants $c',c''\in (0,1)$, with probability $1-O(e^{-L^{c'}})$ the Hausdorff distance between corresponding paths is at most $L^{c''}$. Consequently, we have the following lemma.  
  \begin{lemma}\label{lemma_coupling_crossingloop}
   There exist $\Cl\label{const_crossingloop_big}>1$ and $\cl\label{const_crossingloop},\cl\label{const_crossingloop_new}\in (0,1)$ and a coupling between $\mathcal{L}_1$ and $\mathcal{L}_2$ such that with probability at least $1-\Cref{const_crossingloop_big}L^{-\cref{const_crossingloop}}$, the following events happen: 
   \begin{enumerate}

   	\item There is a bijection between the loops in $\mathcal{L}_1$ and $\mathcal{L}_2$ which intersect both $x+[0,L)^d$ and $\partial B(x,10dL)$ such that corresponding loops coincide outside $B(x,5dL)$ and the Hausdorff distance between their remaining parts is at most $L^{\cref{const_crossingloop_new}}$. The loops in $\mathcal{L}_1$ and $\mathcal{L}_2$ that do not intersect $x+[0,L)^d$ coincide.

   

   	\item  After imposing killing at rate $\nu \in [0,L^{-2}]$ to the graphs $\mathbf{G}_1$ and $\mathbf{G}_2$, each pair of corresponding crossing loops in Item (1) is either both killed or both retained.


   \end{enumerate} 
 \end{lemma}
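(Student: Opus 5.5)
The coupling will be built in three layers, using the Poisson structure of $\mathcal{L}_i$. First I split each loop soup into three independent Poisson point processes:
\begin{itemize}
\item[(a)] loops avoiding $x+[0,L)^d$;
\item[(b)] loops intersecting $x+[0,L)^d$ but not $\partial B(x,10dL)$;
\item[(c)] loops intersecting both.
\end{itemize}
The intensity measures of (a) agree for $\mathcal{L}_1$ and $\mathcal{L}_2$, since the diffusions coincide before hitting $x+[0,L)^d$. I couple (a) identically; this gives the last sentence of Item (1). Part (b) is left unconstrained, coupled independently, because Item (1) says nothing about it. The real work is (c).

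\textbf{Step 1: reduce (c) to crossing paths.} Each loop in (c) crosses $B(x,5dL)\setminus B(x,2dL)$. Following \cite[Section 2.6.3]{cai2025one}, I encode it by:
\begin{itemize}
\item the number $k$ of crossings;
\item the ordered endpoints $(y_j,z_j)$ of its forward and backward crossing paths;
\item the crossing paths themselves, with law (\ref{distribution_crossing_path});
\item the excursions outside $B(x,5dL)$ and inside $B(x,2dL)$ that connect them.
\end{itemize}
By the spatial Markov property, given the endpoints, all these pieces are independent. The pieces lying outside $B(x,5dL)$, and the backward paths, have the same conditional laws on both graphs. The induced point-process intensities of endpoint configurations are built from products of kernels $\widetilde{\mathbb P}_y(\widetilde\tau_{\hat\partial\hat B(x,5dL)}=\widetilde\tau_z<\infty)$ and their reverses.

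\textbf{Step 2: couple the endpoint configurations.} By the discussion preceding the lemma, based on Lemma \ref{lemma2.1_isometric} and \cite[Section 3.5]{kozma2007scaling}, the forward kernels differ by a factor $1+O(L^{-c})$. The inward kernels from $\partial B(x,5dL)$ into $B(x,2dL)$ are comparable in the same way, by Lemma \ref{lemma_hitting_error}. Since $k$ has geometric tails (each return to $B(x,2dL)$ from $\partial B(x,10dL)$ costs a uniform factor, by (\ref{ineq_2.15})), the total variation distance between the two endpoint point processes is $O(L^{-c})$. The total mass of (c) is $O(1)$ by (\ref{ineq_crossing_loop}). I then take a maximal coupling, identifying the endpoints and the outer pieces exactly.

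\textbf{Step 3: couple the forward paths.} Conditional on matched endpoints, I couple the forward paths using the adaptation of \cite[Section 3.4]{kozma2007scaling}. This gives Hausdorff distance at most $L^{c''}$ with probability $1-O(e^{-L^{c'}})$; a union bound over the finitely many (tight in number) paths keeps this failure probability small. The paths agree outside $B(x,5dL)$ by construction. The inner pieces inside $B(x,2dL)$ also need coupling; since they stay within distance $2dL$, I only need them matched in endpoints and can choose them with bounded Hausdorff discrepancy (they lie in a ball of radius $2dL$). I expect to need the stated $L^{c''}$ there too. If so, I would reorganize the decomposition with the modified region placed inside the annulus scale, so that only forward paths carry the change.

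\textbf{Step 4: killing (Item (2)).} Under killing, a loop is killed iff its range hits $\bar{\mathfrak c}$. Equivalently, an independent exponential clock of rate $\nu\,\cdot$(local time weighted by $\kappa$) fires. I couple the clocks so that the event is decided by the common outer pieces, except with probability proportional to the difference in killing mass accumulated along coupled forward paths. Those paths have durations $O(L^2)$ and $\nu\le L^{-2}$, so by the estimate behind (\ref{bound26}) together with the $1+O(L^{-c})$ kernel comparison (valid with killing), the mismatch probability is $O(L^{-c})$.

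The main obstacle is Step 2: controlling the total variation of the endpoint point process uniformly, including multi-crossing loops and killing. This requires the kernel comparison to hold with a relative error that is uniform over all boundary points.
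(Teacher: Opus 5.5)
Your architecture (forward/backward crossing decomposition, a $1+O(L^{-c})$ comparison of forward kernels, and a coupling of forward paths adapted from \cite[Section 3.4]{kozma2007scaling}) matches the route the paper sketches before the lemma. However, Steps 1 and 3 as written leave a real hole.

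Besides the crossing paths, you introduce separate connecting excursions inside $B(x,2dL)$ and outside $B(x,5dL)$, and you match the inner ones only at their endpoints. That gives a Hausdorff discrepancy of order $L$, since they can be anywhere in a ball of radius $2dL$; it does not give $L^{c''}$ with $c''<1$. This is not peripheral: the modified box $x+[0,L)^d$ lies inside $B(x,2dL)$. In your bookkeeping, the inner pieces are therefore exactly the parts whose law changes, while the pieces you couple finely barely see the modification. You flag this yourself, but your remedy is left vague.

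What resolves it is the decomposition of \cite[Section 2.6.3]{cai2025one} as the paper uses it. A forward crossing path is the entire trajectory from a point of $\hat{\partial}\hat{B}(x,2dL)$ until the first hitting of $\hat{\partial}\hat{B}(x,5dL)$, with law (\ref{distribution_crossing_path}). It therefore already contains all its returns into $B(x,2dL)$, including every visit to the modified box. A backward crossing path runs from $\hat{\partial}\hat{B}(x,5dL)$ until the first hitting of $\hat{\partial}\hat{B}(x,2dL)$. It contains everything outside $B(x,5dL)$ and never feels the modification. The loop is the cyclic concatenation of these paths, with nothing in between.

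Consequently, the backward paths and their kernels are identical on the two graphs, which gives coincidence outside $B(x,5dL)$. No comparison of ``inward'' kernels is needed; the forward kernels are what Lemma \ref{lemma2.1_isometric} with \cite[Section 3.5]{kozma2007scaling}, or Lemma \ref{lemma_hitting_error}, controls. All of the change is carried by the forward paths, and the Kozma-type coupling gives the $L^{c''}$ bound for them directly. Your Step 2 already uses these forward kernels, which include the inner excursions, so Step 1 double counts.

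Two smaller points.

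First, in Step 4, (\ref{bound26}) only says that a forward path of duration $O(L^2)$ is killed with probability $O(\nu L^2)=O(1)$, so it cannot supply the smallness. Also, Hausdorff-close paths on different graphs need not have close local times, so a pathwise comparison of killing clocks is not available. Instead, couple the killed/retained label jointly with each forward path at the level of laws. This uses that the forward-kernel comparison also holds on $\mathbf{G}_1^{\nu},\mathbf{G}_2^{\nu}$ for $\nu\le L^{-2}$, as the paper notes. Backward paths, including whether they are killed, are identical.

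Second, once you restrict to loops hitting $x+[0,L)^d$, the crossing paths are no longer conditionally independent given the endpoints. It is cleaner to split each forward-path measure into two parts. The box-avoiding part is common to both graphs and can be coupled identically; the box-hitting part is where the Kozma-type coupling is applied. This split also gives the coincidence of loops avoiding the box.
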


 A detailed proof of Lemma \ref{lemma_coupling_crossingloop} can be found in \cite{technicalpaper}.

\subsection{Relation to Gaussian free fields.}
For any $D\subset \widetilde{\mathbf{G}}$, the Green's function on $\widetilde{\mathbf{G}}$ for the set $D$ is defined by 
  \begin{equation}
 	G_D(v,w):=  \widetilde{\mathbb{E}}_v\Big[\int_{0\le t \le  \widetilde{\tau}_D\land \widetilde{\zeta}} 
    \mathbf{1}_{\{\widetilde{X}_t=w\}}\,\mathrm{d}t\Big], \ \forall v,w\in \widetilde{\mathbf{G}}.
 \end{equation}
 In particular, when $D=\emptyset$, we write $G(\cdot,\cdot):=G_{\emptyset}(\cdot,\cdot)$.

The GFF on $\widetilde{\mathbf{G}}$, denoted by $\{\phi_v\}_{v\in \widetilde{\mathbf{G}}}$, is a family of mean-zero Gaussian random variables with covariance given by $G(\cdot, \cdot)$, i.e., 
\begin{equation}
	\mathbb{E}\big[\phi_v \phi_w \big]= G(v,w), \ \forall v,w\in \widetilde{\mathbf{G}}. 
\end{equation}
Conditioned on $\cap_{w\in D}\{\phi_w=0\}$, the field $\{\phi_v\}_{v\in \widetilde{\mathbf{G}}\setminus D}$ is still a mean-zero Gaussian field, whose covariance is given by $G_D(\cdot, \cdot)$. We denote its law by $\mathbb{P}^D$ (we may omit the superscript when $D=\emptyset$). It was shown in \cite[Proposition 5.2]{lupu2016loop} that  
	 \begin{equation}\label{lupu_two_point}
    	\mathbb{P}^D\big(v \xleftrightarrow{\ge 0 }  w \big) = \frac{1}{\pi} \arcsin  \Big(\frac{G_D(v,w)}{\sqrt{G_D(v,v)G_D(w,w)}}   \Big) ,\ \forall v,w \in \widetilde{\mathbf{G}}\setminus D.
    \end{equation}

     \textbf{Isomorphism theorem.} For $v\in \widetilde{\mathbf{G}}\setminus D$, let $\mathbf{L}^D_v$ denote the total local time at $v$ of the loops in $\mathcal{L}^D$. According to \cite[Proposition 2.1]{lupu2016loop}, there exists a coupling between $\mathcal{L}^D$ and $\{\phi_v\}_{v\in \widetilde{\mathbf{G}}\setminus D}\sim \mathbb{P}^D$ such that 
     \begin{itemize}

       \item   $\mathbf{L}^D_v=\frac{1}{2}\phi_v^2$ for all $v\in \widetilde{\mathbf{G}}\setminus D$, which implies that the clusters of $\mathcal{L}^D$ are exactly the sign clusters of $\phi$;

     	\item  Conditioned on the collection of sign clusters, the signs assigned to distinct clusters are independent and uniformly distributed on $\{+,-\}$.

     \end{itemize}

 \textbf{Lupu-Werner formula.} We next review a formula from \cite{lupu2018random} that has played an important role in recent development on this topic. We first recall the definition of the boundary excursion kernel. For two subsets $D_1,D_2$ of a metric graph $\widetilde{\mathbf{G}}$, their graph distance $\widetilde{\mathrm{d}}(D_1,D_2) $ is defined as the infimum of the lengths of all paths connecting $D_1$ to $D_2$. For any $v,w\in\widetilde{\mathbf{G}}$, we abbreviate $\|v-w\|:=\widetilde{\mathrm{d}}(\{v\},\{w\})$. For a compact set $D\subset \widetilde{\mathbf{G}}$, we denote its boundary by $\widetilde{\partial} D:=\{ x\in D: \widetilde{\mathrm{d}}(\{x\}, D^c)=0\}$. The boundary excursion kernel for $D$ is defined as 
  \begin{equation}\label{def_K}
  	\mathbb{K}_D(v,w):= \lim\limits_{\epsilon \downarrow 0} (2\epsilon)^{-1} \sum_{v'\in \widetilde{\mathbf{G}}: \|v-v'\|=\epsilon  } \widetilde{\mathbb{P}}_{v'}\big( \tau_{D}= \tau_{w}< \infty \big), \ \forall v,w\in  \widetilde{\partial} D. 
  \end{equation}
  For any $D_1,D_2\subset \widetilde{\mathbf{G}}$, we denote by $D_1\xleftrightarrow{\ge 0} D_2$ the event that there exists a path on $\widetilde{\mathbf{G}}$ connecting $D_1$ and $D_2$ along which $\phi$ is non-negative. \cite[Equation (18)]{lupu2018random} shows that given all GFF values on $D_1\cup D_2$, if these values are all non-negative, then the conditional probability of $\{D_1\xleftrightarrow{\ge 0} D_2\}$ equals
  \begin{equation}\label{formula_LW}
  	1- e^{-2\sum_{v_1\in \widetilde{\partial}D_1, v_2\in \widetilde{\partial}D_2 } \mathbb{K}_{D_1\cup D_2}(v_1,v_2)\phi_{v_1} \phi_{v_2} }. 
  \end{equation} 
  This gives an explicit formula for the probability that two sets carrying GFF boundary values of the same sign belong to the same sign cluster. However, in some applications, only the boundary values of the occupation field (i.e., the absolute values of the GFF on the boundary) are given. In this setting, the formula (\ref{formula_LW}) does not directly apply to the corresponding connecting probability. As shown in the following lemma, there is an explicit relation between these two types of conditional connecting probabilities, which provides a way to address this issue. A similar derivation already appears in the proof of \cite[Theorem 2]{werner2025switching}.

 \begin{lemma}\label{newlemma2.3}
 	 For any disjoint $D_1,D_2\subset \widetilde{\mathbf{G}}$, given the values of the occupation field on $D_1\cup D_2$, if for each $j\in \{1,2\}$, all boundary points of $D_j$ with positive values belong to the same sign cluster $\mathcal{C}_j\subset D_j$, then 
  \begin{equation}
 	   \mathbb{P}\big( D_1\xleftrightarrow{} D_2 \mid  \{\mathbf{L}_v \}_{v\in D_1\cup D_2}  \big) = \frac{q}{2-q  },
 	\end{equation}
 	where $q:=\mathbb{P}\big(D_1\xleftrightarrow{} D_2    \mid  \{\mathbf{L}_v \}_{v\in D_1\cup D_2} , \mathcal{C}_1\ \text{and}\ \mathcal{C}_2\ \text{both have sign} +  \big)$. 
 \end{lemma}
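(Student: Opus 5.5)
The proof rests on the sign symmetry of the GFF and on how the sign flip of a single cluster acts on the connection event. By the isomorphism theorem, conditionally on the occupation field $\{\mathbf{L}_v\}$ on $D_1\cup D_2$, the absolute values $|\phi_v|=\sqrt{2\mathbf{L}_v}$ are fixed there. The signs of the distinct sign clusters are independent uniform, and this remains true after we also condition on the cluster structure restricted to $D_1\cup D_2$. By hypothesis, the positive-valued boundary points of $D_j$ lie in a single cluster $\mathcal{C}_j$. So the only sign randomness relevant on the boundary consists of two signs $\sigma_1,\sigma_2\in\{+,-\}$, where $\sigma_j$ is the sign of $\mathcal{C}_j$.

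First I condition on $\{\mathbf{L}_v\}_{v\in D_1\cup D_2}$ and split according to $(\sigma_1,\sigma_2)$. The point is that $\{D_1\leftrightarrow D_2\}$ means $\mathcal{C}_1$ and $\mathcal{C}_2$ lie in the same sign cluster; a connection through other points would have to use boundary points with positive occupation, hence $\mathcal{C}_1,\mathcal{C}_2$. If $\mathcal{C}_1=\mathcal{C}_2$ globally, then $\sigma_1=\sigma_2$ necessarily. Write $p$ for the conditional probability of connection and $\pi_{s}=\mathbb{P}(\sigma_1=\sigma_2=s\mid\mathbf{L})$. Global sign flip $\phi\mapsto-\phi$ preserves $\mathbf{L}$ and connectivity, which gives $\pi_+=\pi_-$ and $\mathbb{P}(\text{conn},\sigma=(+,+))=\mathbb{P}(\text{conn},\sigma=(-,-))$. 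Since the connection event forces equal signs, $p=2\,\mathbb{P}(\text{conn},++)$, and $q=\mathbb{P}(\text{conn},++)/\pi_+$.

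The key step is to compute $\pi_+$ in terms of $p$, using independent uniform cluster signs. Conditionally on the cluster partition (outside the given data) and the event that $\mathcal{C}_1,\mathcal{C}_2$ are connected, the common sign is uniform. This gives $\sigma=(+,+)$ with probability $1/2$. On the complementary event the two signs are independent uniform, giving $(+,+)$ with probability $1/4$. The justification needed is that conditioning on $\mathbf{L}$ restricted to $D_1\cup D_2$ is measurable with respect to the cluster partition together with $|\phi|$. It therefore does not bias the signs; I would invoke the second bullet of the isomorphism theorem, applied to the full field and then integrated. This yields
\[
\pi_+=\tfrac12 p+\tfrac14(1-p)=\tfrac{1+p}{4},\qquad \mathbb{P}(\text{conn},++)=\tfrac{p}{2}.
\]
Hence $q=\frac{p/2}{(1+p)/4}=\frac{2p}{1+p}$, and solving for $p$ gives $p=\frac{q}{2-q}$, as claimed.

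The main obstacle is the rigorous version of the conditioning argument. The event "$\mathcal{C}_1$ and $\mathcal{C}_2$ both have sign $+$" has to be given a proper meaning under conditioning on a continuum family $\{\mathbf{L}_v\}$. I would handle this by discretizing or by working with the joint law of $(\mathcal{L},\text{signs})$, where the signs are assigned i.i.d. to loop clusters. In that formulation the occupation field and the cluster partition are functions of $\mathcal{L}$ alone, and the signs are independent of $\mathcal{L}$. The identities above then hold conditionally on $\mathcal{L}$ and integrate to the conditional statement given $\mathbf{L}$ on $D_1\cup D_2$. Under this formulation, the independence of the signs from $\mathcal{L}$ directly supplies the $1/2$ and $1/4$ factors used above.
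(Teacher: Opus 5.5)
Your proposal is correct and is essentially the paper's proof. The paper introduces the same-sign event $\mathsf{S}$, uses the independent uniform cluster signs to get $\mathbb{P}(\mathsf{S}\mid\mathcal{F})=\tfrac12(p+1)$, and uses global sign symmetry to write $p=q\,\mathbb{P}(\mathsf{S}\mid\mathcal{F})$; this is your computation with $\mathbb{P}(\mathsf{S}\mid\mathcal{F})=2\pi_+$, and your way of making the conditioning rigorous (i.i.d. signs on loop clusters, independent of $\mathcal{L}$) covers a point the paper leaves implicit.
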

 \begin{proof}
  We define $\mathsf{S}$ as the event that $\mathcal{C}_1$ and $\mathcal{C}_2$ have the same sign. Let $\mathcal{F}$ denote the $\sigma$-field generated by $\{\mathbf{L}_v \}_{v\in D_1\cup D_2}$. Since $\{D_1\xleftrightarrow{} D_2\}\subset \mathsf{S}$, we have 
 	\begin{equation}\label{new214}
 		\begin{split}
 			   \mathbb{P}\big( \mathsf{S}  \mid \mathcal{F} \big) 
 			   = 	&	   \mathbb{P}\big( D_1\xleftrightarrow{} D_2 \mid \mathcal{F} \big)  + 		   \mathbb{P}\big(\{ D_1\xleftrightarrow{} D_2\}^c, \mathsf{S}   \mid  \mathcal{F} \big)  \\
 			   =& \mathbb{P}\big( D_1\xleftrightarrow{} D_2 \mid \mathcal{F} \big)  + \tfrac{1}{2}  \mathbb{P}\big( \{ D_1\xleftrightarrow{} D_2\}^c    \mid  \mathcal{F} \big)\\
 			   =& \tfrac{1}{2} \big[ \mathbb{P}\big( D_1\xleftrightarrow{} D_2 \mid \mathcal{F} \big) +1 \big],
 		\end{split}
 	\end{equation}
where the second identity follows from the fact that the sign of each sign cluster is chosen uniformly from $\{+,-\}$. Meanwhile, we also have
\begin{equation}\label{new215}
	\begin{split}
		 \mathbb{P}\big( D_1\xleftrightarrow{} D_2 \mid \mathcal{F} \big) =  \mathbb{P}\big( D_1\xleftrightarrow{} D_2 \mid \mathcal{F}, \mathsf{S} \big) \cdot  \mathbb{P}\big( \mathsf{S} \mid \mathcal{F} \big) =q\cdot \mathbb{P}\big( \mathsf{S} \mid \mathcal{F} \big),
	\end{split}
\end{equation}
where in the last identity we used the symmetry of the GFF. Combining (\ref{new214}) and (\ref{new215}), we complete the proof.  
 \end{proof}

 \textbf{Switching identity.} The following powerful result of \cite{werner2025switching} shows that conditioned on two given points being connected by a loop cluster, the entire loop soup admits an explicit description. For any $D\subset \widetilde{\mathbf{G}}$ and $v,w\in \widetilde{\mathbf{G}}\setminus D$, let $\mathbf{e}_{v,w}^{D}$ denote the Brownian excursion measure from $v$ to $w$ outside $D$, supported on the space of paths in $\widetilde{\mathbf{G}}$ from $v$ to $w$ that avoid $D\cup \{v,w\}$ except at their endpoints. Its precise definition can be found in \cite[Section 2.4]{cai2025separation}.


 \begin{lemma}[{\cite[Theorem 2]{werner2025switching}}]\label{lemma_switching}
 For any metric graph $\widetilde{\mathbf{G}}$, distinct points $v,w\in \widetilde{\mathbf{G}}$, and $a,b>0$, conditioned on $\{v\xleftrightarrow{} w, \mathbf{L}_v=a,\mathbf{L}_w=b \}$, the occupation field $\{\mathbf{L}_z\}_{z\in \widetilde{\mathbf{G}}}$ has the same distribution as the total local time of the following four
independent components:
 
 \begin{enumerate}

 	\item loops in the loop soup $\mathcal{L}^{\{v,w\}}$;

 	\item a Poisson point process with intensity measure $a\cdot \mathbf{e}^{\{w\}}_{v,v}$;

 	\item a Poisson point process with intensity measure $b \cdot \mathbf{e}^{\{v\}}_{w,w}$;

 	\item a Poisson point process with intensity measure $\sqrt{ab}\cdot \mathbf{e} _{v,w}$, where the number of excursions is conditioned to be odd.

 \end{enumerate}

 \end{lemma}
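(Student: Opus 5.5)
The plan is to derive the switching identity from the isomorphism theorem together with the random sign structure of the metric-graph GFF. This parallels the derivation of Lemma \ref{newlemma2.3}, but now keeps track of the whole occupation field rather than a single connection probability. Throughout, $\phi$ is coupled with $\mathcal{L}$ as in the isomorphism theorem, so that $\mathbf{L}=\frac12\phi^2$. The first observation uses that, conditionally on the sign clusters, the signs of distinct clusters are i.i.d.\ uniform. This gives
\begin{equation*}
\mathbb{E}\big[\mathrm{sgn}(\phi_v\phi_w)\mid \{\mathbf{L}_z\}_{z}\big]=\mathbb{P}\big(v\xleftrightarrow{} w\mid \{\mathbf{L}_z\}_z\big).
\end{equation*}
Indeed, on $\{v\xleftrightarrow{} w\}$ the sign product is $+1$, and off this event it has conditional mean $0$. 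Consequently, the conditional law of $\{\mathbf{L}_z\}$ given $\{v\xleftrightarrow{} w,\mathbf{L}_v=a,\mathbf{L}_w=b\}$ has density proportional to $\mathbb{E}[\mathrm{sgn}(\phi_v\phi_w)\mid \mathbf{L}]$ with respect to the law of $\mathbf{L}$ given $\{\mathbf{L}_v=a,\mathbf{L}_w=b\}$. So it suffices to compute the law of $\mathbf{L}$ given $(\phi_v,\phi_w)$ for each of the four sign choices, and then take the signed combination.

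Next I condition on $\phi_v=\sigma\sqrt{2a}$ and $\phi_w=\sigma'\sqrt{2b}$ with $\sigma,\sigma'\in\{\pm\}$. By the Markov property, $\phi$ on $\widetilde{\mathbf{G}}\setminus\{v,w\}$ is then a GFF with law $\mathbb{P}^{\{v,w\}}$ plus the harmonic extension of the boundary values. For this boundary GFF I would invoke the boundary version of the isomorphism (as in \cite{lupu2018random}). When the boundary values have equal signs, $\frac12\phi^2$ equals the occupation field of $\mathcal{L}^{\{v,w\}}$ together with independent Poisson excursion processes. These have intensities $a\,\mathbf{e}^{\{w\}}_{v,v}$, $b\,\mathbf{e}^{\{v\}}_{w,w}$, and $\sqrt{ab}\,\mathbf{e}_{v,w}$ (normalization to be checked against (\ref{formula_LW})). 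The Poisson number $N$ of $v$--$w$ excursions then has mean $\sqrt{ab}\,|\mathbf{e}_{v,w}|$. For opposite signs the cross term is instead handled through the joint Gaussian density of $(\phi_v,\phi_w)$. The key point is that this density contains the factor $\exp(\sigma\sigma'\sqrt{4ab}\,\mathbb{K})$, where $\mathbb{K}$ is the total mass of $v$--$w$ excursions, the same quantity as in (\ref{formula_LW}). Expanding $e^{x}=\cosh x+\sinh x$, averaging over the sign, and decomposing by the parity of $N$ splits the law of $\mathbf{L}$ given $|\phi_v|,|\phi_w|$ into an even-$N$ part weighted by $\cosh$ and an odd-$N$ part weighted by $\sinh$. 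The components (1)--(3) are common to both parts.

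Finally, I would multiply by $\mathrm{sgn}(\phi_v\phi_w)$ and sum over the sign configurations. The even-$N$ ($\cosh$) contributions cancel, and the odd-$N$ ($\sinh$) contribution survives. After normalization this is exactly the law of (1)--(3) plus a $\sqrt{ab}\,\mathbf{e}_{v,w}$ Poisson process conditioned to have an odd number of points, which is the claim. As a consistency check, the result recovers (\ref{lupu_two_point}) after integrating over $(a,b)$.

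I expect the main obstacle to be bookkeeping rather than ideas. The conditioning is on null events ($\mathbf{L}_v=a$, $\mathbf{L}_w=b$) and must be made rigorous through densities or regular conditional laws. One also has to verify that the boundary isomorphism produces excursion intensities with precisely the constants $a$, $b$, and $\sqrt{ab}$ under the normalizations of $\mathbf{e}$ from \cite{cai2025separation}. To make this safe, I would first carry out the computation on a finite metric graph with a killing point, where all densities are explicit Gaussians, and then pass to $\widetilde{\mathbf{G}}$ by exhaustion.
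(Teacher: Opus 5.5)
Your reduction is sound. Because the sign clusters are read off from $\mathbf{L}$ and their signs are i.i.d.\ uniform, $\mathbbm{1}_{\{v\xleftrightarrow{} w\}}=\mathbb{E}[\mathrm{sgn}(\phi_v\phi_w)\mid\mathbf{L}]$. So what you must compute is $p_+\mathrm{Law}_+-p_-\mathrm{Law}_-$. Here $p_\pm\propto e^{\pm x}$ with $x:=\sqrt{4ab}\,\mathbb{K}$, and $\mathrm{Law}_\pm$ is the law of $\mathbf{L}$ given $|\phi_v|=\sqrt{2a}$, $|\phi_w|=\sqrt{2b}$ and $\mathrm{sgn}(\phi_v\phi_w)=\pm1$. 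The paper gives no proof and simply imports the result from \cite{werner2025switching}, so your derivation has to be self-contained.

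The gap is that you never identify $\mathrm{Law}_-$. The Gaussian density of $(\phi_v,\phi_w)$ only yields the weights $p_\pm$; it does not give the conditional law of $\frac12\phi^2$ away from $\{v,w\}$. The boundary isomorphism you invoke concerns nonnegative boundary data, so it says nothing once the harmonic extension changes sign. In fact $\mathrm{Law}_-$ is not a Poisson excursion picture at all: it is the law of $\mathbf{L}$ given $\{v\xleftrightarrow{} w\}^c$. The sign-flip bijection alone only gives $p_-\mathrm{Law}_-=p_+\mathrm{Law}_+(\,\cdot\,;\{v\xleftrightarrow{} w\}^c)$, which reduces your signed combination to the tautology $p_+\mathrm{Law}_+(\,\cdot\,;v\xleftrightarrow{} w)$. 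So ``the even-$N$ contributions cancel'' currently has nothing behind it. Two related inaccuracies:
\begin{itemize}
\item The unsigned law given $|\phi_v|,|\phi_w|$ has no odd part at all; the odd part only appears after weighting by the sign.
\item The cancellation needs the same-sign Poisson mean $\lambda$ to equal $x$ exactly. This is a genuine identity to be proved, not normalization bookkeeping.
\end{itemize}

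The missing idea is to continue the boundary isomorphism analytically in the boundary values. Take a test function $\theta\ge0$ and write $\phi=\phi^{\{v,w\}}+u_vh_v+u_wh_w$. A Gaussian computation shows that $\mathbb{E}[e^{-\langle\theta,\frac12\phi^2\rangle}\mid\phi_v=u_v,\phi_w=u_w]$ is a $u$-independent factor times the exponential of a quadratic form in $(u_v,u_w)$. The Laplace functional of (1)--(3) plus a Poisson process of $v$--$w$ excursions with intensity proportional to $u_vu_w$ has exactly the same structure. Lupu's theorem makes the two agree on $(0,\infty)^2$, hence on all of $\mathbb{R}^2$. Comparing cross terms is also where $\lambda=x$ gets verified.

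This gives $\mathbb{E}_\pm[e^{-\langle\theta,\mathbf{L}\rangle}]=R_\theta\,e^{\mp x}e^{\pm x\hat e_\theta}$. Here $R_\theta$ is the Laplace functional of (1)--(3) and $\hat e_\theta$ that of one normalized $v$--$w$ excursion. For the minus sign this is a signed ``Poisson$(-x)$'' expansion, not a probabilistic picture. Then
$p_+\mathbb{E}_+[e^{-\langle\theta,\mathbf{L}\rangle}]-p_-\mathbb{E}_-[e^{-\langle\theta,\mathbf{L}\rangle}]=R_\theta\sinh(x\hat e_\theta)/\cosh x$.
This is $\tanh x=\mathbb{P}(v\xleftrightarrow{} w\mid a,b)$ times the Laplace functional of (1)--(3) plus the odd-conditioned process. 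Since both sides are positive measures, this identifies them.

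Alternatively, you can avoid $\mathrm{Law}_-$ by writing $p_+\mathrm{Law}_+-p_-\mathrm{Law}_-=2p_+\mathrm{Law}_+-\mathrm{Law}(\mathbf{L}\mid a,b)$. You would then need to prove separately that, for the intensity-$\frac12$ soup, the number of $v$--$w$ crossing excursions given $(a,b)$ is Poisson$(x)$ conditioned to be even. With either addition your route becomes a complete proof.
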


  \section{Connecting probabilities on interpolating graphs }

In this section, we present some estimates for connection probabilities that will be useful in the proof of Proposition \ref{prop_1.2}. As mentioned in Section \ref{subsection2.1_sketch}, a key step in our proof is to establish the exact order of the one-arm probability, as below. 
\begin{lemma}\label{lemma_new_one_arm}
  For any $3\le d\le 5$, $x\in \mathbb{R}^d$, $N\ge 1$, and any interpolating graph $\mathbf{G}^\nu$ with killing rate $\nu \in [0,N^{-2}]$,  
      \begin{equation}\label{ineq_new_one_arm}
     	\mathbb{P}\big( x \xleftrightarrow{ } \partial B(x,N) \big) \asymp N^{-\frac{d}{2}+1}.
     \end{equation} 
     
      \end{lemma}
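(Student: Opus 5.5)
The plan is to transfer the known proofs of $\theta_d(N)\asymp N^{-\frac d2+1}$ on $\widetilde{\mathbb Z}^d$ (from \cite{ding2020percolation,cai2024one,drewitz2025critical}) to interpolating graphs with small killing. Those proofs use only a few inputs: Green's function bounds $G(x,y)\asymp(|x-y|+1)^{2-d}$, the hitting estimate (\ref{ineq_2.15}), Harnack-type regularity, the isomorphism theorem, and the Lupu--Werner formula (\ref{formula_LW}). I would first check that the first three hold uniformly on $\mathbf{G}^\nu$ with $\nu\le N^{-2}$, up to scale $N$. For $\nu=0$ this is \cite[Section 3]{kozma2007scaling}. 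For killing at rate $\nu\le N^{-2}$, a walk run for time $O(N^2)$ survives with probability bounded below. Hence the Green's function restricted to $B(x,2N)$, the killed Green's function $G_{B(x,N)^c}$, and the hitting probabilities of balls of radius $\le N$ are all comparable to their massless counterparts. The isomorphism theorem and (\ref{lupu_two_point}) hold on any metric graph, so they need no change.

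\textbf{Lower bound.} I would use the standard second-moment argument on the number $Z$ of points $y$ on a suitable shell, for instance the vertices in $B(x,N)\setminus B(x,N/2)$, with $x\xleftrightarrow{\ge0}y$ (or on the zero-level clusters). By (\ref{lupu_two_point}) together with the Green's function bounds, $\mathbb{E}Z\asymp N^{d}\cdot N^{2-d}=N^2$. Bounding $\mathbb{E}Z^2$ needs a three-point estimate $\mathbb{P}(x\leftrightarrow y, x\leftrightarrow z)\lesssim$ a tree-graph-like bound. A cleaner alternative is the approach of \cite{ding2020percolation}: Lupu's formula with the killed Green's function on $B(x,N)$, combined with the bound $\mathbb{E}[\text{volume of cluster of }x]$ and the Cauchy--Schwarz inequality on cluster volume within $B(x,N)$. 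Concretely, I would bound the expected volume inside $B(x,N)$ from below by $N^2$ and from above, conditionally on reaching $\partial B(x,N)$, by $N^{\frac d2+1}$. The latter follows from the volume estimates of \cite{cai2024quasi}, which transfer in the same way. This yields $\theta\gtrsim N^{2}/N^{\frac d2+1}=N^{1-\frac d2}$.

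\textbf{Upper bound.} This is the main obstacle, since it relies on the delicate arguments of \cite{cai2024one} (or \cite{drewitz2025critical}), which bound the one-arm probability via the harmonic average of $\phi$ on the explored cluster, i.e.\ the martingale $\sum_{v}\mathbb{K}\,\phi_v$ and an effective-capacity comparison. I would follow the capacity route. Exploring the cluster $\mathcal C$ of $x$ from $x$, the quantity $\mathbb{E}[\phi_x\mid \mathcal{F}_{\mathcal C}]$ is a martingale. On the event that the cluster reaches $\partial B(x,N)$, its capacity is $\gtrsim N^{d-2}$ times a cluster-dependent factor, while the boundary values are, on average, of order $N^{1-\frac d2}$. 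The key inputs are capacity lower bounds for sets crossing an annulus, $\mathrm{cap}\gtrsim N^{d-2}$ in the massive setting. These follow from (\ref{ineq_2.15}) by a last-exit decomposition, and the killing costs only constants because $\nu N^2\le1$. I expect that the only genuinely new work is verifying that every lattice-specific step (e.g.\ translation invariance used in averaging over starting points) can be replaced by the uniform estimates above. Where translation invariance is used, I would replace it by the uniform comparability $G(y,z)\asymp|y-z|^{2-d}$ on $\mathbf{G}^\nu$, which suffices since only orders of magnitude are required.
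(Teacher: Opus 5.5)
Both halves of your argument have genuine gaps.

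\textbf{Upper bound.} The one concrete input you name is false. That input is the claim that a set crossing an annulus at scale $N$ has capacity $\gtrsim N^{d-2}$, obtained from (\ref{ineq_2.15}) by a last-exit decomposition. (\ref{ineq_2.15}) controls the capacity of balls, not of arbitrary connected crossing sets. A straight segment of length $N$ has capacity $\asymp N/\log N$ when $d=3$ and $\asymp N$ when $d=4,5$, far below $N^{d-2}$.

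If such a deterministic bound held, the upper bound would follow at once from the explicit law $\mathbb{P}(\mathrm{cap}(\mathcal{C}_x)\ge t)\asymp t^{-1/2}$. The whole difficulty in \cite{cai2024one} (and in \cite{drewitz2025critical} for $d=3$) is to show that a cluster reaching distance $N$ is nevertheless thick. Your ``cluster-dependent factor'' is exactly this unproved part.

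The paper also stresses that the proof of \cite[Theorem 1.1]{cai2024one} uses translation invariance and does not extend trivially. You assert that translation invariance can be replaced by $G(y,z)\asymp|y-z|^{2-d}$ ``since only orders of magnitude are required'', but you give no argument, and this is precisely the nontrivial point.

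The paper does not re-prove the one-arm bound on $\mathbf{G}^\nu$ at all. It takes the sharp estimate on $\widetilde{\mathbb{Z}}^d$ from \cite{cai2024one} and transfers it to the interpolating graph. The transfer uses the same box-by-box isometric-interpolation comparison as in the proof of Proposition \ref{prop_1.2}, showing that the total error caused by the local graph modifications is negligible. For $d=3$ one can also quote \cite{drewitz2025critical}.

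\textbf{Lower bound.} Your volume argument is circular. Since $Z>0$ forces the arm event, $\mathbb{E}Z=\theta\cdot\mathbb{E}[Z\mid\text{arm}]$. Given $\mathbb{E}Z\asymp N^2$, the bound $\mathbb{E}[Z\mid\text{arm}]\lesssim N^{\frac d2+1}$ is therefore equivalent to the lower bound you want. Moreover, the paper states that the results of \cite{cai2024quasi} hold on interpolating graphs only as a consequence of this very lemma, because they take the sharp one-arm estimate as input.

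The second-moment alternative does not rescue it. A tree-graph bound $\sum_w\tau(x,w)\tau(w,y)\tau(w,z)$ only gives $\mathbb{E}Z^2\lesssim N^6$, hence $\theta\gtrsim N^{-2}$, which is weaker than $N^{1-\frac d2}$ for $d\le 5$. The sharp three-point bound you would need is (\ref{ineq_multiset}), which again comes from \cite{cai2024quasi}.

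The paper takes the lower bound from the argument of \cite{ding2020percolation}, which needs only Green's function estimates. Your capacity idea does work in this direction. If $\mathcal{C}_x$ does not reach $\partial B(x,N)$, then $\mathcal{C}_x\subset B(x,N)$, so $\mathrm{cap}(\mathcal{C}_x)\lesssim N^{d-2}$; this remains true with killing $\nu\le N^{-2}$. The cluster-capacity law of \cite{drewitz2023critical} then gives
\[
\mathbb{P}(x\xleftrightarrow{}\partial B(x,N))\ge\mathbb{P}(\mathrm{cap}(\mathcal{C}_x)\ge CN^{d-2})\gtrsim N^{1-\frac d2}.
\]
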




When we write $A \xleftrightarrow{ }  B$ below, it means $A$ is connected to $B$ via loops in the loop soups of intensity $1/2$. The lower bound for $\mathbb{P}( x \xleftrightarrow{ } \partial B(x,N) )$ in (\ref{ineq_new_one_arm}) follows from the arguments in \cite{ding2020percolation}. The upper bound can be obtained by the method in the proof of Proposition \ref{prop_1.2}, showing that the total error caused by the graph modification is negligible. See \cite{technicalpaper} for a detailed proof of Lemma \ref{lemma_new_one_arm}. In addition, for $d=3$, this is known thanks to \cite{drewitz2025critical}. Note that Lemma \ref{lemma_new_one_arm} does not follow from a trivial extension of \cite{cai2024one} since the translation invariance property has been employed in the proof of \cite[Theorem 1.1]{cai2024one}. However, the arguments in \cite{cai2024quasi} do not use translation invariance, and rely only on the sharp estimate for the one-arm probability together with standard properties of random walks. Thus, by Lemma \ref{lemma_new_one_arm}, all the results in \cite{cai2024quasi} remain valid for the interpolating graphs considered here. In the following lemma, we record several properties that will be used later. We assume that $3\le d\le 5$ and that $\mathbf{G}^{\nu}$ is an interpolating graph with killing rate $\nu\in [0,N^{-2}]$. Recall that $A \xleftrightarrow{(D)}  B$ means connection using loops disjoint with $D$.

   \begin{lemma}\label{lemma_regularity}
   	 For any $x\in \mathbb{R}^d$, $n\ge 1$ and $N\ge 10d^2n$, the following hold: 
   	 \begin{itemize} 
   	 
   	 	\item (\cite[Theorem 1.1]{cai2024quasi}) For any $A_1,D_1\subset B(x,n)$ and $A_2,D_2\subset [B(x,N)]^c$, 
   	 	 \begin{equation}\label{ineq_quasi}
   	 	 	\mathbb{P}\big( A_1 \xleftrightarrow{(D_1\cup D_2)} A_2   \big) \asymp 	\mathbb{P}\big( A_1 \xleftrightarrow{(D_1)} \partial B(\tfrac{N}{d})   \big)\cdot 	\mathbb{P}\big( A_2 \xleftrightarrow{( D_2)} \partial B(\tfrac{N}{d})   \big). 
   	 	 \end{equation}

   	 	\item   (\cite[Proposition 1.9]{cai2024quasi}) For any $A,D\subset B(x,n)$ and  $N_1,N_2\ge 10dn$, 
       	 \begin{equation}\label{ineq_crossing_decay}
 	 N_1^{\frac{d}{2}-1} 	\mathbb{P}\big( A \xleftrightarrow{(D)} \partial B(x,N_1 ) \big)\asymp   N_2^{\frac{d}{2}-1} 	\mathbb{P}\big( A \xleftrightarrow{(D)} \partial B(x,N_2 ) \big). 
 	 \end{equation}

   	 	\item  (\cite[(5.2)]{cai2024quasi}) Denote $r_1:=n$, $r_{-1}:=N$, $\mathbf{B}_1:=B(x,n)$ and $\mathbf{B}_{-1}:=[B(x,N)]^c$. Then for $D\subset \widetilde{\mathbf{G}}$, $i\in \{1,-1\}$ and $A\subset \mathbf{B}_{-i}$, 
	\begin{equation}\label{ineq_bound_5.2}
		\mathbb{P}\big( A  \xleftrightarrow{(D)} \mathbf{B}_{i}  \big) \lesssim   r_{i}^{-\frac{d}{2}} \sum\nolimits_{y\in \hat{\partial} \hat{B}(x,d^{i}r_{i})}  \mathbb{P}\big( A  \xleftrightarrow{(D)} y  \big). 
	\end{equation}

   	 	\item  (\cite[Lemma 5.3]{cai2024quasi}) For any $A,D\subset B(x,n)$ and $y,z\in \hat{\partial} B(x,N)$,  
   	 	   	 	 \begin{equation}\label{ineq_multiset}
   	 	   	 	 	\mathbb{P}\big( A\xleftrightarrow{(D)} y, A\xleftrightarrow{(D)} z  \big) \lesssim  (|y-z|+1)^{-\frac{d}{2}+1}\mathbb{P}\big( A\xleftrightarrow{(D)} y  \big). 
   	 	   	 	 \end{equation}
   	 	   	 	 
   	 \end{itemize}

   \end{lemma}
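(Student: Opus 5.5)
The plan is not to reprove the four estimates from scratch. Instead I would audit the proofs in \cite{cai2024quasi} and check that every input they use is available, with uniform constants, on an interpolating graph $\mathbf{G}^\nu$ with $\nu\in[0,N^{-2}]$. The inputs of \cite{cai2024quasi} are of three kinds.

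\begin{enumerate}
\item[(a)] Exact loop-soup and GFF identities: the restriction property, the isomorphism theorem, the two-point formula (\ref{lupu_two_point}), the Lupu--Werner formula (\ref{formula_LW}), and the switching identity of Lemma \ref{lemma_switching}. These hold verbatim on any metric graph, and in particular on $\widetilde{\mathbf{G}}^\nu$.
\item[(b)] Random walk estimates: the Green's function bound $G(v,w)\asymp (|v-w|+1)^{2-d}$ for $|v-w|\lesssim N$, the hitting estimate (\ref{ineq_2.15}), the crossing-loop mass (\ref{ineq_crossing_loop}), Harnack inequalities for harmonic functions in annuli, and the comparison of the harmonic measure on $\partial B(x,r)$ with the uniform measure.
\item[(c)] The sharp one-arm estimate $\theta(N)\asymp N^{-\frac d2+1}$, uniformly over the centre $x$.
\end{enumerate}

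For (b), I would invoke \cite[Section 3]{kozma2007scaling} and Lemma \ref{lemma2.1_isometric}. For (c), I would invoke Lemma \ref{lemma_new_one_arm}.

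The first concrete step is to handle the killing. Because $\nu\le N^{-2}$, a walk started inside $B(x,10dN)$ survives until it exits $B(x,10d^2N)$ with probability bounded below uniformly; this follows from the duration tail bound used for (\ref{bound26}). Consequently, on $B(x,CN)$ the killed Green's function and hitting probabilities are comparable to the unkilled ones. Beyond distance $N$ the killing only helps the upper bounds, which is the regime where \cite{cai2024quasi} needs decay. Lemma \ref{lemma2.1_isometric}, with $s(\nu)$ bounded below when $\nu r^2\le\Lambda$, provides the harmonic-measure input at every scale $r\lesssim N$.

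The second step is to go through the proofs in order: (\ref{ineq_crossing_decay}) first, then (\ref{ineq_bound_5.2}), then (\ref{ineq_multiset}), and finally the quasi-multiplicativity (\ref{ineq_quasi}), which is deduced from the other three. At each step I would confirm that the estimates are applied to balls centred at arbitrary points with constants depending only on $d$. The structure of those arguments is as follows.
\begin{itemize}
\item They explore clusters from one side.
\item They use the switching identity or the Lupu--Werner formula to express the connection from the explored boundary to the far boundary through boundary excursion kernels $\mathbb{K}$.
\item They bound these kernels by Green's function and hitting estimates.
\item They use the one-arm bound to sum over dyadic scales.
\end{itemize}
Each ingredient is local in this sense and uses no shift of the lattice.

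The main obstacle I expect is the one place where the original argument might implicitly use translation invariance. The risk is a comparison of $\mathbb{P}(y\leftrightarrow\partial B(y,r))$ across different base points, or the averaging over boundary points $y\in\hat\partial\hat B(x,d^i r_i)$ in (\ref{ineq_bound_5.2}), which would need $\sum_y\mathbb{P}(A\leftrightarrow y)$ to behave uniformly. My first approach would be to replace every such appeal by the pointwise uniform bound from Lemma \ref{lemma_new_one_arm}, which holds for every centre and every interpolating graph. I would also use the fact that the type-1 and type-2 regions have bounded-ratio vertex densities. Together these keep the counting of boundary vertices in (\ref{ineq_bound_5.2}) comparable to $r_i^{d-1}$, so that the normalisation $r_i^{-\frac d2}$ is unchanged up to constants.
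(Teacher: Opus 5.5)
Your plan is essentially the paper's own justification: the paper simply observes that the arguments of \cite{cai2024quasi} use no translation invariance and need only the sharp one-arm bound of Lemma \ref{lemma_new_one_arm} plus standard random-walk estimates on interpolating graphs, and your inventory (a)--(c) and your check of uniformity in the centre make that remark explicit. One caution on the killing: your remark that beyond scale $N$ killing only affects upper bounds does not cover the two-sided estimate (\ref{ineq_crossing_decay}), which for $\nu>0$ fails when $N_1\gg N$ (a union bound over the sphere using the massive two-point function $\asymp G^{\nu}$ bounds the one-arm probability to distance $R$ by $O(R\,e^{-c\sqrt{\nu}R})$), so that item should be read with $N_1,N_2\lesssim N$ rather than proved at all scales.
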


%
%
%


 	As a direct corollary of (\ref{ineq_bound_5.2}), we have 
	\begin{equation}\label{newineq3.3}
		\mathbb{P}\big(B(x,n)  \xleftrightarrow{(D)}  \partial B(x,N) \big)\lesssim  (nN)^{-\frac{d}{2}} \sum\nolimits_{y\in \hat{\partial} \hat{B}(x,dn),z\in \hat{\partial} \hat{B}(x,d^{-1}N)}  \mathbb{P}\big( y  \xleftrightarrow{(D)} z  \big). 
	\end{equation} 
	For $D\subset B(x, \frac{N}{2d})\setminus B(x, 2dn)$, it follows from (\ref{lupu_two_point}) that for any $y\in \hat{\partial} \hat{B}(x,dn)$ and $z\in \hat{\partial} \hat{B}(x,d^{-1}N)$
   \begin{equation}\label{newineq3.8}
   	\begin{split}
   		\mathbb{P}\big(y  \xleftrightarrow{(D)} z \big)\lesssim  N^{2-d}\cdot    \widetilde{\mathbb{P}}_y\big( \widetilde{\tau}_{\partial B(x,N )} < \widetilde{\tau}_{D} \big). 
   	\end{split}
   \end{equation}
  Combining (\ref{newineq3.3}) and (\ref{newineq3.8}), we have 
   \begin{equation}\label{bound_crossing_by_hitting}
   	\mathbb{P}\big( B(x,n)  \xleftrightarrow{(D)}  \partial B(x,N)  \big)\lesssim  (n/N)^{\frac{d}{2}-1} \cdot  \widetilde{\mathbb{P}}_y\big( \widetilde{\tau}_{\partial B(x,N )} < \widetilde{\tau}_{D} \big). 
   \end{equation}

By taking $D=\emptyset$ in (\ref{newineq3.3}) and then applying (\ref{lupu_two_point}), we have
    \begin{equation}
   	 \mathbb{P}\big( B(x,n)\xleftrightarrow{} \partial B(x,N) \big) \lesssim  (n/N)^{\frac{d}{2}-1}.  
   \end{equation}
   Meanwhile, the reverse inequality follows from the second-moment method (see \cite[Section 5.3]{cai2024quasi}, where (\ref{ineq_multiset}) is used to bound the second moment). Combining the two bounds, we obtain
   \begin{equation}\label{crossing_prob}
   		 \mathbb{P}\big( B(x,n)\xleftrightarrow{} \partial B(x,N) \big) \asymp   (n/N)^{\frac{d}{2}-1}.   
   \end{equation}

\subsection{Thickness of loop clusters}
 
 As in Lemma \ref{lemma_regularity}, we consider the loop soup $\mathcal{L}^{\nu}$ of intensity $1/2$ on an interpolating graph $\mathbf{G}^{\nu}$. We assume $N\ge 1$ and $\nu\in [0,N^{-2}]$. The main aim of this subsection is to show that with high probability, a cluster $\mathcal{C}$ in $\mathcal{L}^{\nu}$ crossing an annulus is sufficiently dense that a random walk crossing the same annulus has only a polynomially small probability of avoiding $\mathcal{C}$.


      We first present the following analogue of \cite[Lemma 4.11]{cai2025heterochromatic}. For a compact $D\subset \widetilde{\mathbf{G}}$, its capacity on $\widetilde{\mathbf{G}}$ is given by 
  \begin{equation}
  	\mathrm{cap}(D):= \sum\nolimits_{v\in \widetilde{\partial}D} \mathbb{Q}_D(v),
  \end{equation}
  where $\mathbb{Q}_D(v):= \lim_{N\to \infty}  \sum_{ w\in \hat{\partial} \hat{B}(N) } \mathbb{K}_{D\cup  \hat{\partial} \hat{B}(N) }(v,w)$. For any $y\in \widetilde{\mathbf{G}}$, we denote by $\mathcal{C}_y^{\nu,D}$ the cluster in $\mathcal{L}^{\nu}\cdot \mathbbm{1}_{\mathrm{ran}(\mathrm{\ell})\cap D=\emptyset}$ containing $y$. We also abbreviate $\mathcal{C}_y^{\nu}:=\mathcal{C}_y^{\nu,\emptyset}$. In fact, the proof of \cite[Lemma 4.11]{cai2025heterochromatic} rely only on the following three ingredients: 
  \begin{enumerate}
  
  	\item The estimate for one-arm probabilities in Lemma \ref{lemma_new_one_arm};

  	\item  The bound in (\ref{ineq_multiset}) for connecting probabilities between multiple sets;

    \item The decay rate of the cluster capacity, i.e., for any $K\ge 1$, there exists $\epsilon(d,K)>0$ such that for any $N\ge 1$ and $y\in \widetilde{\mathbf{G}}$ with $\nu \in [0, N^{-2}] $, 
 \begin{equation}
 	\mathbb{P}\big( \mathrm{cap}(\mathcal{C}_y^{\nu}) \ge \epsilon N^{d-2} \big) \ge KN^{-\frac{d}{2}+1}.   
 \end{equation}

  \end{enumerate}
 Here the third ingredient is a direct corollary of \cite[Corollary 1.3]{drewitz2023critical}. Consequently, the desired analogue follows automatically.

%
%
%
%
%
%
%

   \begin{lemma}\label{lemma_cap_lower}
   There exists $\cl\label{const_cap_lower}>0$ such that for any $x\in \mathbb{R}^d$,
      	\begin{equation}\label{ineq_cap_lower}
   		\mathbb{P}\big( \exists y\in \hat{B}(x,N)\ \text{such that}\ \mathrm{cap}(\mathcal{C}_y^{\nu,  \partial B(x,N)})\ge \cref{const_cap_lower}N^{d-2}    \big)\asymp 1.  
   	\end{equation}
   \end{lemma}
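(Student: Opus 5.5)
The upper bound in (\ref{ineq_cap_lower}) is trivial, so the plan is to prove the lower bound by a second-moment argument on a count of ``good'' clusters. The count lives at scale comparable to $N$, inside the ball $B(x,N)$, with loops touching $\partial B(x,N)$ removed. Throughout I would work with the killed soup $\mathcal{L}^{\nu}\cdot \mathbbm{1}_{\mathrm{ran}(\ell)\cap \partial B(x,N)=\emptyset}$. This is again a loop soup: it is associated with the diffusion stopped on $\partial B(x,N)$ and with killing $\nu\le N^{-2}$. Hence the one-arm bound in Lemma \ref{lemma_new_one_arm} and the estimates of Lemma \ref{lemma_regularity} should remain available for points well inside the ball.

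\textbf{Step 1: the counting variable.} Fix a small $\eta>0$ and let $Y$ be the number of vertices $y\in \hat{B}(x,N/2)$ satisfying
\[
\mathrm{cap}(\mathcal{C}_y^{\nu,\partial B(x,N)})\ge \epsilon N^{d-2}
\quad\text{and}\quad
\mathcal{C}_y^{\nu,\partial B(x,N)}\subset B(x,N/2+\eta N)\ \text{or at least has diameter}\asymp N .
\]
Here $\epsilon=\epsilon(d,K)$ is taken from the third ingredient (the capacity decay from \cite[Corollary 1.3]{drewitz2023critical}).

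\textbf{Step 2: first moment.} For a single $y$ I would show
\[
\mathbb{P}\big(\mathrm{cap}(\mathcal{C}_y^{\nu,\partial B(x,N)})\ge \epsilon N^{d-2}\big)\gtrsim N^{-\frac d2+1}.
\]
The capacity decay bound gives this for the full soup with $K$ large. To pass to the restricted soup, I would bound the difference between the two events. A large cluster in the full soup that loses capacity must use a loop hitting $\partial B(x,N)$. Such a loop should be controlled by the crossing-probability bound (\ref{crossing_prob}) at distance $N/2$, combined with the quasi-multiplicativity estimate (\ref{ineq_quasi}). Together these give a bound $cN^{-\frac d2+1}$ with $c$ small relative to $K$; at worst I would rescale the ball to $N/C$ to gain the needed room. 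Summing over $y$ then yields $\mathbb{E}Y\gtrsim N^{d}\cdot N^{-\frac d2+1}=N^{\frac d2+1}$.

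\textbf{Step 3: second moment.} On the event above, $y$ is connected to distance $\asymp N$. So $\mathbb{E}Y^2$ is bounded by
\[
\sum_{y,z}\mathbb{P}\big(y\leftrightarrow \partial B(y,cN),\ z\leftrightarrow \partial B(z,cN)\big).
\]
I would split this sum according to whether $y$ and $z$ lie in the same cluster.
\begin{itemize}
\item \emph{Same cluster.} Use the multi-point bound (\ref{ineq_multiset}), in the form $\mathbb{P}(y\leftrightarrow z,\ y\leftrightarrow \text{far})\lesssim |y-z|^{-\frac d2+1}N^{-\frac d2+1}$. Summing over $z$ in a ball of radius $N$ gives $N^{\frac d2+1}$ per $y$, hence $N^{d+2}$ in total.
\item \emph{Different clusters.} Use the van den Berg--Kesten-type inequality for disjoint loop clusters (the BKR inequality for Poisson processes), together with Lemma \ref{lemma_new_one_arm}. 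This gives $N^{2d}\cdot N^{-d+2}=N^{d+2}$.
\end{itemize}
Thus $\mathbb{E}Y^2\lesssim (\mathbb{E}Y)^2$, and Paley--Zygmund gives $\mathbb{P}(Y>0)\gtrsim 1$. Finally, $Y>0$ implies the event in (\ref{ineq_cap_lower}) with $c=\epsilon$.

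\textbf{Main obstacle.} The delicate step is Step 2, transferring the capacity lower tail from the unrestricted soup to the soup killed on $\partial B(x,N)$. Capacity can drop substantially when boundary loops are removed, and the probability cost of removing them must be shown to be a small fraction of $N^{-\frac d2+1}$. If the direct comparison fails, I would first try the following. Restrict to $y$ near $x$, use (\ref{ineq_crossing_decay}) to compare arms to radius $N/(10d)$ and radius $N$, and apply the capacity decay bound at scale $N/(10d)$, where the cluster typically does not reach $\partial B(x,N)$. That last point holds with conditional probability bounded below, by (\ref{ineq_crossing_decay}) and the $\frac{d}{2}-1$ arm exponent.
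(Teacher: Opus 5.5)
Your second-moment argument is essentially the argument of \cite[Lemma 4.11]{cai2025heterochromatic} that the paper invokes: the first moment comes from the capacity tail (ingredient (3) with $K$ large) minus the one-arm probability of Lemma \ref{lemma_new_one_arm}, and the second moment from the multi-point bound (\ref{ineq_multiset}) (used in its version for two nearby points joined to a far set) together with BKR, which is exactly why the paper lists those three ingredients. The step you flag as the main obstacle is in fact immediate: if the unrestricted cluster $\mathcal{C}_y^{\nu}$ misses $\partial B(x,N)$ then it equals $\mathcal{C}_y^{\nu,\partial B(x,N)}$, so the loss is at most $\mathbb{P}(y\leftrightarrow \partial B(x,N))\lesssim N^{-\frac d2+1}$, which is absorbed by choosing $K$ larger than this constant (no quasi-multiplicativity or rescaling needed), and the extra containment condition in Step 1 can be dropped.
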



 The next lemma is the main result of this subsection. Recall that $\nu \in [0,N^{-2}]$. For any $x\in \mathbb{R}^d$, $n\ge 1$, $N\ge 10d^2n$ and $D\subset \widetilde{\mathbf{G}}$ crossing $B(x,N)\setminus B(x,n)$, we say that $D$ is $\lambda$-dense if for any $y \in \hat{\partial}\hat{B}(x,dn)$, 
 \begin{equation}\label{new3.5}
 	\widetilde{\mathbb{P}}_y\big( \widetilde{\tau}_{\partial B(x,N)} < \widetilde{\tau}_{D\cap [B(x,\frac{N}{2d}) \setminus B(x,2dn)]}   \big) \le  (n/N)^{\lambda}. 
 \end{equation}

 \begin{lemma}\label{lemma_cluster_dense}
     	For any $K\ge 1$, there exist $\Cl\label{const_big_cluster_dense}(K),\cl\label{const_small_cluster_dense}(K)>0$ such that for any $x\in \mathbb{R}^d$, $n\ge 1$, $N\ge 10n$, $v\in \hat{\partial }\hat{B}(x,n)$ and $w\in \hat{\partial }\hat{B}(x,N)$,      	\begin{equation}\label{newineq3.6}
     		\mathbb{P}\big( \mathcal{C}_w^{\nu}\ \text{is not}\ \cref{const_small_cluster_dense}\text{-dense}  \mid v\xleftrightarrow{} w    \big) \le \Cref{const_big_cluster_dense} (n/N)^{K}. 
     	\end{equation}
     	Here $v\xleftrightarrow{} w$ denotes the event that $v$ and $w$ belong to the same cluster in $\mathcal{L}^{\nu}$. 
 \end{lemma}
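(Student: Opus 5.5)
The plan is a multi-scale argument over the dyadic scales between $n$ and $N$, combined with a conditional independence structure obtained from explorations. Let $J\asymp \log_2(N/n)$. For each $j$ with $2dn\cdot 2^{3j}\le \frac{N}{2d}$, consider the annuli $\mathcal{A}_j:=B(x,2^{3j+2}\cdot 2dn)\setminus B(x,2^{3j}\cdot 2dn)$. A random walk from $y\in\hat\partial\hat B(x,dn)$ reaching $\partial B(x,N)$ must cross every $\mathcal{A}_j$. Call scale $j$ \emph{good} if $\mathcal{C}_w^{\nu}\cap\mathcal{A}_j$ contains a set $D_j$ such that every point of the inner sphere of $\mathcal{A}_j$ has hitting probability of $D_j$ (before leaving the slightly larger annulus) bounded below by some $c_0>0$. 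Given good scales $j_1<\dots<j_m$, the strong Markov property at the successive inner spheres yields
\[
\widetilde{\mathbb{P}}_y\big(\widetilde\tau_{\partial B(x,N)}<\widetilde\tau_{\mathcal{C}_w^\nu\cap[B(x,\frac{N}{2d})\setminus B(x,2dn)]}\big)\le (1-c_0)^{m}.
\]
It is therefore enough to show that, conditioned on $v\leftrightarrow w$, the probability that fewer than $\delta J$ scales are good is at most $C(n/N)^K$. With $\lambda:=\delta\log_2(1/(1-c_0))/3$, this gives $\lambda$-density.

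The positive capacity lower bound at a single scale comes from Lemma \ref{lemma_cap_lower}. On $\{v\leftrightarrow w\}$, the cluster crosses each $\mathcal{A}_j$. Inside $\mathcal{A}_j$, Lemma \ref{lemma_cap_lower} gives, with probability bounded below, a cluster of capacity $\gtrsim (2^{3j}n)^{d-2}$ inside a sub-ball. A capacity of order $r^{d-2}$ in a ball of radius $r$ implies a uniform hitting probability from distance $\asymp r$, by the standard last-exit decomposition together with (\ref{ineq_2.15}). It remains to glue this cluster to $\mathcal{C}_w^\nu$. Here I would use a quasi-multiplicativity/second-moment step: by (\ref{ineq_quasi}) and (\ref{ineq_multiset}), the conditional probability that the crossing path of $\mathcal{C}_w^\nu$ through $\mathcal{A}_j$ merges with a large-capacity local cluster is bounded below uniformly, given the configuration outside a neighbourhood of $\mathcal{A}_j$. (Merging just means requiring an additional connection inside $\mathcal{A}_j$, which costs only a constant factor by quasi-multiplicativity.)

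The main obstacle is turning these uniform per-scale lower bounds into an exponential tail for the number of good scales under the conditioning $\{v\leftrightarrow w\}$, since the scales are not independent. I would first try the separated-scale decomposition (factor $2^3$ gaps) and explore the cluster from $w$ inward, scale by scale. At step $j$, I condition on everything explored outside $B(x,2^{3j+3}\cdot 2dn)$ and on the event that the cluster still reaches $v$. I then use the restriction property, that unexplored loops form $\mathcal{L}^{D}$ for the explored set $D$, together with (\ref{ineq_quasi}) and (\ref{ineq_crossing_decay}) for the connection probabilities with obstacles $D$. These show that, whatever the past, the conditional probability that scale $j$ is good is at least $c_1>0$; the key inputs are that (\ref{ineq_quasi}) holds uniformly in the obstacle sets $D_1,D_2$, and that the conditioning on reaching $v$ only reweights by a bounded factor. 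A Chernoff bound for the resulting dominated Bernoulli sequence then gives
\[
\mathbb{P}(\#\text{good}<\delta J\mid v\leftrightarrow w)\le e^{-cJ}.
\]
To reach an arbitrary exponent $K$ rather than a fixed one, I take $\delta$ small. The large-deviation rate of Bernoulli($c_1$) variables at level $\delta\to0$ tends to $\log\frac{1}{1-c_1}$, which is fixed. So if this is insufficient, I would thin the sequence differently: make each "block" consist of $b$ consecutive separated scales, declared good if any one of its scales is good. This makes the per-block failure probability $(1-c_1)^b$, which is arbitrarily small for $b=b(K)$. The Chernoff rate can then exceed $K\log 2$ per $\log_2(N/n)$, at the cost of $\lambda=c_{\ref*{const_small_cluster_dense}}(K)$ depending on $K$, as allowed by the statement. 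Finally, the remaining near-boundary scales and the case of $N/n$ bounded are absorbed into $C_{\ref*{const_big_cluster_dense}}(K)$.
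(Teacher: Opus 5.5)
Your reduction from good scales to $\lambda$-density via the strong Markov property, and the capacity-to-hitting step, are fine. The proposal nevertheless has a genuine gap, in two places.

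First, the boosting at the end is illusory. Suppose each of your $m\asymp\frac13\log_2(N/n)$ separated scales is good with conditional probability at least $c_1$ given the past. The event ``fewer than $\delta J$ good scales'' contains ``no good scale''. The bound produced by domination is therefore never smaller than $\mathbb{P}(\mathrm{Bin}(m,c_1)=0)=(1-c_1)^m$, so the exponent you can reach is at most about $\frac13\log_2\frac{1}{1-c_1}$, a fixed number. Blocks do not change this: a block of $b$ scales fails with probability $(1-c_1)^b$, but there are only $m/b$ blocks, and $((1-c_1)^b)^{m/b}=(1-c_1)^m$. The Chernoff exponent per unit of $\log_2(N/n)$ is unchanged. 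This matters, since the lemma is used with $K=10d$ in Lemma \ref{lemma_oneloop_onecluster}.

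What you need is a failure probability per dyadic scale that can be made arbitrarily small, at the price of a $K$-dependent density constant. The paper gets this from $(100\delta)^{-1}$ successive attempts inside each annulus $B(x,r_{k+1})\setminus B(x,r_k)$. Each attempt takes place in a ball of radius $\delta r_k$ with capacity threshold $c(\delta r_k)^{d-2}$, and its success probability $c_\ddagger$ does not depend on $\delta$ by the scale invariance of Lemma \ref{lemma_cap_lower}. So a scale fails with probability at most $(1-c_\ddagger)^{(100\delta)^{-1}}$, the Chernoff rate $\gamma_\epsilon$ tends to infinity, and the density constant only degrades to $c(\delta)$.

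Second, the per-scale lower bound is asserted rather than derived. The estimates (\ref{ineq_quasi}), (\ref{ineq_crossing_decay}) and (\ref{ineq_multiset}) are two-sided bounds on connection probabilities, with obstacles only inside $B(x,n)$ and outside $B(x,N)$. They say nothing about the geometry of the crossing part of $\mathcal{C}_w^\nu$ inside $\mathcal{A}_j$. In particular, they do not show that it meets an independent local cluster of large capacity with probability bounded below, uniformly over an arbitrary explored past and under the conditioning on still reaching $v$. That statement is essentially a one-scale version of the lemma itself. Without a separation-type argument, ``merging costs only a constant factor'' is circular, and such an argument would also have to work at sub-scale $\delta r$ to repair the first gap.

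The missing idea is the switching identity, Lemma \ref{lemma_switching}. Conditioned on $\{v\leftrightarrow w\}$, $\mathcal{C}_w^\nu$ contains a Brownian excursion $\eta_{v,w}$ from $v$ to $w$, together with everything connected to it by the independent soup $\mathcal{L}^{\{v,w\}}$. This gives a Markovian spine inside the conditioned cluster. Each time $\eta_{v,w}$ first reaches a sphere $\partial B(x,(\frac43+\delta j)r_k)$, it lies in $\frac12 B$ for some small ball $B$. With probability at least $c_\ddagger$, it then hits a cluster of loops inside $B$ of capacity at least $c(\delta r_k)^{d-2}$ before leaving $2B$. The strong Markov property of $\eta_{v,w}$ supplies the decoupling across attempts and across scales. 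With this, you need no exploration, no quasi-multiplicativity, and no control of how conditioning on the future connection to $v$ reweights the past.
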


 \begin{proof}

  By Lemma \ref{lemma_switching}, conditioned on the event $\{v\xleftrightarrow{} w \}$, the cluster $\mathcal{C}_w^{\nu}$ contains an Brownian excursion $\eta_{v,w}$ from $v$ to $w$, together with all points connected to $\eta_{v,w}$ by $\mathcal{L}^{\{v,w\}}$. We denote $r_k:=2^kn$, and define $k_\star$ as the maximal integer $k$ such that $r_k \le \frac{N}{10}$. Note that $k_\star \asymp \ln(N/n)$. For each $k\in \mathbb{N}^+$, let 
  \begin{equation}
  	\mathfrak{B}_k^{\delta}:= \big\{ B(z, \delta r_k): z\in (\delta r_k)\cdot \mathbb{Z}^d, B(z, \delta r_k)\cap  [B(x,\tfrac{3}{4}r_{k+1})\setminus B(x,\tfrac{4}{3}r_k)] \neq \emptyset \big\}. 
  \end{equation}
   We define $\mathsf{A}_k^{\delta}$ as the event that there exists $B\in \mathfrak{B}_k^{\delta}$ such that $\{\eta_{v,w}(t):\tau_{\partial B(x, r_{k} )} \le  t\le \tau_{\partial B(x,r_{k+1})} \}$ intersects a cluster $\mathcal{C}$ in $\mathcal{L}^{B^c}$ with $\mathrm{cap}(\mathcal{C})\ge \cref{const_cap_lower}(\delta r_k)^{d-2}$. We claim that for any $\epsilon>0$, there exists $\delta>0$ such that arbitrarily given $\eta_{v,w}(\tau_{\partial B(x, r_{k} )})$,   
  \begin{equation}\label{claim39}
  	\mathbb{P}\big( \mathsf{A}_k^{\delta} \big) \ge 1-  \epsilon , \ \forall 1\le k\le k_\star. 
  \end{equation} 
By (\ref{claim39}) and the strong Markov property of $\eta_{v,w}$, the quantity 
$\mathbf{X}_{\delta}:= \sum\nolimits_{1\le k\le k_\star} \mathbbm{1}_{\mathsf{A}_k^{\delta} } $ stochastically dominates the sum of $k_\star$ i.i.d. Bernoulli random variables with mean $1-\epsilon$. Therefore, the Chernoff bound implies 
\begin{equation}
	\mathbb{P}\big( \mathbf{X}_{\delta} \le \tfrac{1}{2}k_\star \big) \lesssim e^{-\gamma_\epsilon k_\star},  
\end{equation}
 where $\gamma_\epsilon \to \infty$ as $\epsilon\downarrow 0$. Thus, by taking a sufficiently small $\delta=\delta_\dagger(K)$, we have 
 \begin{equation}
 	\mathbb{P}\big( \mathbf{X}_{\delta_\dagger} \le \tfrac{1}{2}k_\star \big) \lesssim (n/N)^{K}.
 	 \end{equation}   
  Meanwhile, on the event $\{ \mathbf{X}_{\delta_\dagger} > \tfrac{1}{2}k_\star \}$, there exist more than $\frac{1}{2}k_\star$ annuli of form $B(x,r_k)\setminus B(x,r_{k-1})$ where the cluster $\mathcal{C}_w^{\nu}$ has capacity at least $\cref{const_cap_lower}(\delta_\dagger r_k)^{d-2}$. When a random walk crosses such an annulus, it hits $\mathcal{C}_w^{\nu}$ with uniformly positive probability. This implies that $\mathcal{C}_w^{\nu}$ is $c$-dense for some constant $c=c(\delta_\dagger)>0$.

  It remains to establish the claim in (\ref{claim39}). For any $1\le j\le (100\delta)^{-1}$, when $\eta_{v,w}$ first intersects $\partial B(x,(\frac{4}{3}+\delta  j)r_k)$, the hitting position is contained in $\frac{1}{2}B$ for some $B\in \mathfrak{B}_k^{\delta}$, where $aB$ denotes the box concentric with $B$ rescaled by a factor of $a$. Therefore, it follows from Lemma \ref{lemma_cap_lower} that with uniformly positive probability $c_\ddagger$, $\eta_{v,w}$ hits a cluster $\mathcal{C}$ in $\mathcal{L}^{ B^c}$ with $\mathrm{cap}(\mathcal{C})\ge \cref{const_cap_lower}(\delta r_k)^{d-2}$ before exiting $2B$. This together with the strong Markov property of $\eta_{v,w}$ yields 
  \begin{equation}
  	\mathbb{P}\big( (\mathsf{A}_k^{\delta})^c \big)\le (1-c_{\ddagger})^{(100\delta)^{-1}}. 
  \end{equation}
  This bound implies (\ref{claim39}), and hence completes the proof. 
   \end{proof}

 \subsection{Multi-arm probabilities}

 As mentioned in Section \ref{subsection2.1_sketch}, we need to estimate probabilities that multiple large loop clusters or loops appear  simultaneously. In this subsection, we collect some estimates of this type that will be used later.

 A series of estimates for two-arm events were established in \cite{cai2025heterochromatic}. The arguments therein essentially rely on the fact that a loop cluster of diameter $R$ typically has capacity of order $R^{d-2}$. Referring to Lemma \ref{lemma_cap_lower}, this property holds for the interpolating graphs considered here. Thus, we have the following analogues of \cite[Theorems 1.1 and 1.4]{cai2025heterochromatic}. In this subsection, we always consider the loop soup $\mathcal{L}^{\nu}$ with killing rate $\nu\in [0,N^{-2}]$. Let $\mathsf{H}^{A_1,A_2}_{A_3,A_4}$ be the event that there exist two disjoint loop clusters, one connecting $A_1$ and $A_2$ and the other connecting $A_3$ and $A_4$.




   \begin{lemma}\label{lemma_twocluster}
   (1) For any $x\in \mathbb{R}^d$ and $N>n\ge 1$,
    \begin{equation}\label{two_arm1}
  	 	\mathbb{P}\big( \mathsf{H}^{B(x,n),\partial B(x,N)}_{B(x,n),\partial B(x,N)} \big) \lesssim \big( n/N \big)^{\frac{d}{2}+1}. 
  	 \end{equation} 
   (2) For any $x\in \mathbb{R}^d$, $N>n\ge 1$, $v_1,v_2\in \hat{B}(x,n)$ and $w_1,w_2\in [\hat{B}(x,N)]^c$,  
  	  \begin{equation}\label{newineq_3.13}
  	 	\mathbb{P}\big( \mathsf{H}^{v_1,w_1}_{v_2,w_2} \big) \lesssim n^{3-\frac{d}{2}} N^{-\frac{3d}{2}+1}.  
  	 \end{equation}
   
  	 
   \end{lemma}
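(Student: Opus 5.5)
The plan is to transfer the proofs of \cite[Theorems 1.1 and 1.4]{cai2025heterochromatic} to the interpolating graphs $\mathbf{G}^\nu$. That argument uses only a few inputs, and each has a counterpart already established above:
\begin{itemize}
\item the sharp one-arm and crossing estimates, Lemma \ref{lemma_new_one_arm} and (\ref{crossing_prob});
\item the quasi-multiplicativity and regularity bounds of Lemma \ref{lemma_regularity};
\item the switching identity, Lemma \ref{lemma_switching};
\item the capacity lower bound of Lemma \ref{lemma_cap_lower}, i.e.\ a cluster of diameter $R$ typically has capacity $\asymp R^{d-2}$;
\item random-walk hitting estimates such as (\ref{ineq_2.15}).
\end{itemize}

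I would prove Item (2) first and then deduce Item (1) from it.

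\textbf{Item (2).} Split according to whether the two clusters are disjoint, and explore the cluster $\mathcal{C}$ of $v_1$ and $w_1$. Its existence costs $\mathbb{P}(v_1\leftrightarrow w_1)\asymp N^{2-d}$ by (\ref{lupu_two_point}). By Lemma \ref{lemma_switching}, conditioned on this event, $\mathcal{C}$ contains an excursion $\eta$ from $v_1$ to $w_1$ together with the loops of $\mathcal{L}^{\{v_1,w_1\}}$ attached to it. Given $\mathcal{C}$, the second cluster must connect $v_2$ to $w_2$ in the loop soup restricted to the complement of $\mathcal{C}$ (restriction property).

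In the Gaussian-free-field formulation, this probability is governed by the Green's function $G_{\mathcal{C}}(v_2,w_2)$, through (\ref{lupu_two_point}) applied with $D=\mathcal{C}$. Since $v_2$ starts near $\mathcal{C}$, this Green's function is bounded by $N^{2-d}$ times the probability that a walk from $v_2$ escapes to distance $N$ while avoiding $\mathcal{C}$, in the spirit of (\ref{newineq3.8}).

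Lemma \ref{lemma_cluster_dense} shows that $\mathcal{C}$ is dense except on an event of probability $(n/N)^K$. The naive avoidance bound $(n/N)^{c}$ is too weak, however. The target exponent $3-\tfrac d2$ (after dividing out $N^{4-2d}$, it is $(n/N)^{3-d/2}$) comes from the annealed non-intersection estimate in \cite{cai2025heterochromatic}: the probability that a random walk and an independent cluster grown along a crossing excursion avoid each other across the annulus decays like $(n/N)^{3-d/2}$. To obtain it, I would run a scale-by-scale argument. At each dyadic scale $r_k$ between $n$ and $N$, both the walk and $\eta$ carry capacity-order pieces, by Lemma \ref{lemma_cap_lower} and the same block construction as in the proof of Lemma \ref{lemma_cluster_dense}. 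One then needs an exact decay rate rather than just a positive probability of collision per scale. Here I would follow \cite{cai2025heterochromatic}, expressing the avoidance probability via the harmonic measure and the quasi-multiplicativity (\ref{ineq_quasi}) to obtain a sub-multiplicative quantity with the correct exponent.

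\textbf{Item (1).} Use (\ref{ineq_bound_5.2}) twice, at the inner and outer boundaries, to reduce to a sum of point-to-point two-cluster events over $v_1,v_2\in\hat\partial\hat B(x,dn)$ and $w_1,w_2\in\hat\partial\hat B(x,N/d)$. The sum carries prefactors $n^{-d}N^{-d}$ (from $r_i^{-d/2}$ appearing twice at each end) and has $n^{2(d-1)}N^{2(d-1)}$ terms. Plugging in (\ref{newineq_3.13}) gives
\[
n^{d-2}N^{d-2}\,n^{3-\frac d2}N^{-\frac{3d}{2}+1}=n^{\frac d2+1}N^{-\frac d2-1}.
\]
A multi-cluster version of (\ref{ineq_bound_5.2}) is justified by exploring one cluster first and applying (\ref{ineq_bound_5.2}) to the second with $D$ equal to the first cluster, which is allowed since that lemma holds for arbitrary $D$.

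\textbf{Main obstacle.} The hard step is the sharp exponent $3-\tfrac d2$ for the annealed avoidance in Item (2). It requires redoing the delicate scale decomposition of \cite{cai2025heterochromatic} without translation invariance. I would check that only the capacity estimate, the quasi-multiplicativity (\ref{ineq_quasi}), and (\ref{ineq_multiset}) enter, and that each holds uniformly on $\mathbf{G}^\nu$ for $\nu\le N^{-2}$.
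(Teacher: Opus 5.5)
Your opening plan matches the paper's justification: port the proofs of \cite[Theorems 1.1 and 1.4]{cai2025heterochromatic} to $\mathbf{G}^\nu$, with Lemma \ref{lemma_cap_lower} supplying the fact that a cluster of diameter $R$ has capacity of order $R^{d-2}$. Your reduction of Item (1) to Item (2) is also sound. Exploring one cluster, applying (\ref{newineq3.3}) to each cluster in turn as in the proof of Lemma \ref{lemma_threecluster}, and inserting (\ref{newineq_3.13}) gives $(nN)^{-d}\cdot(nN)^{2(d-1)}\cdot n^{3-\frac d2}N^{1-\frac{3d}{2}}=(n/N)^{\frac d2+1}$.

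The gap is in your reconstruction of Item (2). You take the exponent $3-\frac d2$ from the annealed estimate that a crossing random walk avoids the cluster of a crossing excursion with probability $\lesssim (n/N)^{3-\frac d2}$, which is essentially (\ref{newineq3.14}). Through the switching identity (Lemma \ref{lemma_switching}) this estimate is closely tied to (\ref{newineq_3.13}). Following \cite[Remark 4.6]{cai2025heterochromatic}, the paper derives Lemma \ref{lemma_path_avoid_cluster} \emph{from} (\ref{newineq_3.13}), not conversely. Using it as an input is therefore circular.

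Your fallback does not close the gap either:
\begin{itemize}
\item The scale-by-scale argument, where the walk meets capacity-order pieces of the cluster with uniformly positive probability at each dyadic scale, is exactly the mechanism of Lemma \ref{lemma_cluster_dense}. It only gives an avoidance bound $(n/N)^{c}$ for some unspecified small $c>0$. The paper uses it solely to gain an extra factor $(n/N)^{c}$ on top of bounds that are already sharp (Lemmas \ref{lemma_oneloop_onecluster} and \ref{lemma_threecluster}).
\item Sub-multiplicativity can at best show that some exponent exists; it cannot identify its value as $3-\frac d2$.
\item The quasi-multiplicativity bound (\ref{ineq_quasi}) only handles obstacles confined to $B(x,n)$ and $[B(x,N)]^c$. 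It does not apply to an obstacle spanning the annulus.
\end{itemize}

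The sharp exponent has to come from the actual argument of \cite{cai2025heterochromatic}. The paper's proof consists of observing that the only graph-dependent input of that argument is the capacity bound, which holds on $\mathbf{G}^\nu$ by Lemma \ref{lemma_cap_lower}. You should either stop at the transfer claim and verify that input list, or supply a mechanism that actually pins down the exponent. As written, Item (2) is not established, and hence neither is Item (1).
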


As shown in \cite[Remark 4.6]{cai2025heterochromatic}, one can use (\ref{newineq_3.13}) and the switching identity in \cite{werner2025switching} to compute the probability that two independent random walks are not connected by loop clusters, or that a random walk does not hit a nearby loop cluster. The subsequent lemma follows directly from the arguments in \cite[Section 4.1]{cai2025heterochromatic}, and we therefore omit its proof.


\begin{lemma}\label{lemma_path_avoid_cluster}
	(1) For any $x\in \mathbb{R}^d$, $n\ge 1$, $N>10d^2n$, let $\eta$ be a random walk starting from an arbitrary point in $B(x,dn)$ and stopped upon hitting $\partial B(x,d^{-1}N)$. Then for any $v \in \hat{B}(x,n)$ and $w \in [\hat{B}(x,N)]^c$,  		
	\begin{equation}\label{newineq3.14}
		\mathbb{P}\big( v\xleftrightarrow{} w, \mathcal{C}_v\cap \mathrm{ran}(\eta) =\emptyset   \big) \lesssim  n^{3-\frac{d}{2}}N^{-\frac{d}{2}-1}. 
	\end{equation}
	 (2) Let $\eta_1$ and $\eta_2$ be two independent random walks, each satisfying the assumptions on $\eta$ in Item (1). Then we have 
		\begin{equation}\label{newineq3.15}
		\mathbb{P}\big(  \{ \mathrm{ran}(\eta_1) \xleftrightarrow{} \mathrm{ran}(\eta_2)  \}^c \big) \lesssim  (n/N)^{3-\frac{d}{2}}. 
	\end{equation}
 \end{lemma}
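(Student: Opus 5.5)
We prove Item (2) first and then deduce Item (1) from it, with both items resting on the switching identity (Lemma \ref{lemma_switching}) and the two-cluster estimate (\ref{newineq_3.13}).

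\textbf{Item (2).} We treat $\eta_1,\eta_2$ as independent of the loop soup. The complement event $\{\mathrm{ran}(\eta_1)\xleftrightarrow{}\mathrm{ran}(\eta_2)\}^c$ says that the loop clusters meeting $\eta_1$ are all disjoint from those meeting $\eta_2$. The plan is to realize each random walk as the "spine" of a conditioned cluster, using Lemma \ref{lemma_switching}. Conditioned on $v\xleftrightarrow{}w$, the cluster of $v$ contains an excursion from $v$ to $w$. For $v\in\hat B(x,n)$ and $w\in[\hat B(x,N)]^c$, the portion of this excursion between $\partial B(x,dn)$ and $\partial B(x,d^{-1}N)$ is comparable in law to $\eta$. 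This uses the Harnack-type comparison of hitting distributions from Lemma \ref{lemma_hitting_error} and (\ref{ineq_2.15}). The remaining loops form the soup $\mathcal{L}^{\{v,w\}}$, which is dominated by $\mathcal{L}$.

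Take two pairs $(v_1,w_1)$ and $(v_2,w_2)$ and average over their positions. This gives
\[
\mathbb{P}\big(\mathsf{H}^{v_1,w_1}_{v_2,w_2}\big)\gtrsim \mathbb{P}(v_1\xleftrightarrow{}w_1)\,\mathbb{P}(v_2\xleftrightarrow{}w_2)\cdot \mathbb{P}\big(\{\mathrm{ran}(\eta_1)\xleftrightarrow{}\mathrm{ran}(\eta_2)\}^c\big).
\]
The lower bound on the disjointness of the two conditioned clusters needs an FKG/BK-type decoupling: we keep the two excursions and drop the extra loops, which only helps, since it reduces connectivity. By (\ref{lupu_two_point}), each factor $\mathbb{P}(v_i\xleftrightarrow{}w_i)$ is of order $N^{2-d}$. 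Dividing (\ref{newineq_3.13}) by $N^{4-2d}$ then gives
\[
n^{3-\frac d2}N^{-\frac{3d}{2}+1}\cdot N^{2d-4}=n^{3-\frac d2}N^{\frac d2-3}=(n/N)^{3-\frac d2},
\]
which is (\ref{newineq3.15}).

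The delicate point is the direction of the inequality. Two walks that fail to connect in $\mathcal{L}$ must be turned into two disjoint conditioned clusters. Under the switching identity, the four components (loops in $\mathcal{L}^{\{v,w\}}$, the two Poisson families of excursions, and the odd family of $v$–$w$ excursions) are independent. Disjointness of the two resulting clusters is implied by three things holding together:
\begin{enumerate}
\item the two spines are not connected through $\mathcal{L}$;
\item the extra excursion families are small;
\item the local pieces near $v_i,w_i$ do not merge.
\end{enumerate}
Conditions (2) and (3) hold with probability bounded below, by local estimates at scales $n$ and $N$, and we invoke the quasi-multiplicativity (\ref{ineq_quasi}) to glue the scales.

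\textbf{Item (1).} Here we additionally condition on $v\xleftrightarrow{}w$ with the spine $\eta_{v,w}$, which has probability $\asymp N^{2-d}$. The event that $\mathcal{C}_v$ avoids $\mathrm{ran}(\eta)$ is then compared with the two-cluster event in which the second cluster is generated by $\eta$ viewed as the spine of a connection $v'\xleftrightarrow{}w'$. The same division gives $n^{3-\frac d2}N^{-\frac{3d}{2}+1}/N^{2-d}=n^{3-\frac d2}N^{-\frac d2-1}$, which is (\ref{newineq3.14}).

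The main obstacle is the rigorous comparison of the law of the walk $\eta$ with the middle segment of the conditioned excursion, uniformly over starting points and on interpolating graphs. For this we would use the coupling with Brownian motion together with Lemma \ref{lemma_hitting_error} to show that the Radon–Nikodym derivatives are bounded above and below on the crossing portion, exactly as in \cite[Section 4.1]{cai2025heterochromatic}.
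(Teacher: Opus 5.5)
Your route matches the one the paper points to through \cite{cai2025heterochromatic}: realize each walk as the spine of a cluster conditioned via Lemma \ref{lemma_switching}, then divide (\ref{newineq_3.13}) by one factor $N^{2-d}$ per conditioned connection. Your monotonicity step is also sound. Since the excursion families contain $v_i,w_i$, non-connection in $\mathcal{L}$ implies the same event in the restricted soup, and likewise for the event $\{v\xleftrightarrow{}w,\ \mathcal{C}_v\cap P'=\emptyset\}$ with spine $P'$.

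The gap is the comparison between spines and walks, which you flag as delicate and then only assert. The spine produced by switching is not $\eta_i$. Only its initial segment up to $\partial B(x,d^{-1}N)$ is comparable in law to $\eta_i$. The spine also contains the continuation from the exit point to $w_i$ (which may re-enter the ball), possibly further $v_i$--$w_i$ excursions, and the Poisson families of excursions at $v_i$ and $w_i$. Non-connection of the full spines is therefore a \emph{smaller} event than non-connection of the segments. So the inclusion only bounds $\mathbb{P}(\mathsf{H}^{v_1,w_1}_{v_2,w_2})$ from above by $N^{4-2d}$ times a segment non-connection probability. Combined with the upper bound (\ref{newineq_3.13}), that direction gives nothing. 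Your displayed $\gtrsim$ inequality needs the reverse: the extensions must fail to create a connection with probability bounded below \emph{jointly with} the rare event, of probability $\asymp (n/N)^{3-\frac{d}{2}}$, that the segments are not connected.

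Your justification does not supply this.
\begin{itemize}
\item That your conditions (2) and (3) hold with probability bounded below unconditionally says nothing about them on that rare event.
\item BK only gives upper bounds.
\item Harris--FKG for the Poisson loop soup does decouple the loop-mediated connections between the pieces once the paths are fixed. It does not control the path geometry: under the law tilted by non-connection, you have no control on where the two segments exit $B(x,d^{-1}N)$, or on how close the continuation of one spine comes to the other.
\item (\ref{ineq_quasi}) concerns connection probabilities off fixed sets, which is a different statement.
\end{itemize}

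What is missing is an argument producing the reverse comparison. One example is a separation/extension estimate: conditionally on non-connection up to $\partial B(x,d^{-1}N)$, with probability bounded below the configuration is well separated at that scale. Then completing the spines to $w_1,w_2$ and adding the local excursion families costs only a constant factor.

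Item (1) has the same issue in a sharper form. There $\mathcal{C}_v$ must also avoid the continuation of the spine at scale $N$, which is exactly where $\mathcal{C}_v$ is macroscopic. This extension step is the real content of the argument the paper defers to. Without it, the proposal does not yield (\ref{newineq3.14}) or (\ref{newineq3.15}).
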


Using Lemma \ref{lemma_path_avoid_cluster}, we may derive the estimate required in (\ref{ineq_1.8}). Let $\mathsf{F}_x(N,n)$ denotes the event that there exist a loop and a loop cluster that are disjoint and both cross the annulus $B(x,N)\setminus B(x,n)$.

  \begin{lemma}\label{lemma_oneloop_onecluster}
  	  Let $\cl\label{const_oneloop_onecluster}:=\cref{const_small_cluster_dense}(10d)$.	Then for any $x\in \mathbb{R}^d$ and $N>n\ge 1$, 
  	   	 \begin{equation}
  	 	\mathbb{P}\big( \mathsf{F}_x(N,n) \big) \lesssim \big( n/N \big)^{d+\cref{const_oneloop_onecluster}}. 
  	 \end{equation}
  \end{lemma}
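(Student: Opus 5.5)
Following the heuristic (\ref{ineq_1.9}), I would first reduce to the regime $N\ge C n$ for a large constant $C$; otherwise the bound is trivial. I would also shrink the annulus slightly to $B(x,N/(10d^2))\setminus B(x,10d^2 n)$, which costs only constants.

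\textbf{Decomposition of the crossing loop.} On $\mathsf{F}_x(N,n)$ there is a loop $\ell$ crossing the annulus and a disjoint cluster $\mathcal{C}$ crossing it. Using the decomposition of crossing loops into forward and backward crossing paths (Section \ref{subsection_loop_soup}), together with (\ref{ineq_crossing_loop}), the $\mu$-mass of loops crossing is $\asymp (n/N)^{d-2}$. Each such loop contains at least one forward crossing path $\eta_1$ and one backward crossing path $\eta_2$, and conditionally on their endpoints these are independent walks with law (\ref{distribution_crossing_path}). By Poisson (Mecke) formula, $\mathbb{P}(\mathsf{F})$ is bounded by $\mu$ integrated against the probability that $\mathcal{L}^{\mathrm{ran}(\ell)}$ has a cluster crossing the annulus avoiding $\ell$. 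Here we use that a cluster disjoint from $\ell$ is a cluster of the soup with loops intersecting $\ell$ removed, which by restriction is $\mathcal{L}^{\mathrm{ran}(\ell)}$, independent of $\ell$.

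\textbf{Density split.} Since the cluster crosses, choose $v\in\hat\partial\hat B(x,n')$ and $w\in\hat\partial\hat B(x,N')$ on it. Let $\mathsf{A}$ be the event that the crossing cluster is $\cref{const_small_cluster_dense}(10d)$-dense. Summing Lemma \ref{lemma_cluster_dense} with $K=10d$ over the $O(n^{d-1}N^{d-1})$ pairs $(v,w)$, weighted by $\mathbb{P}(v\leftrightarrow w)\lesssim N^{2-d}$, gives a total of order $n^{d-1}N\cdot(n/N)^{10d}$. Summing instead via (\ref{ineq_bound_5.2}) gives $\mathbb{P}(\mathsf{A}^c,\text{crossing})\lesssim (n/N)^{d+1}$. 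I expect this sum–to–crossing conversion to need some care, e.g.\ using (\ref{newineq3.3}) rather than a crude union.

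On $\mathsf{A}$, conditionally on $\mathcal{C}$ and on the endpoints of $\eta_1,\eta_2$, the backward path $\eta_2$ avoids $\mathcal{C}$ with probability at most $(n/N)^{\cref{const_oneloop_onecluster}}$ by (\ref{new3.5}), after time reversal and an endpoint-change argument through Harnack-type estimates (\ref{ineq_2.15}). Then we drop $\mathsf{A}$ and bound the probability that $\eta_1$ avoids a crossing cluster. By (\ref{newineq3.3}) and (\ref{newineq3.14}), summing over $v,w$ as in (\ref{newineq3.3}), this is
\begin{equation*}
(nN)^{-d/2}\cdot n^{d-1}N^{d-1}\cdot n^{3-d/2}N^{-d/2-1}\asymp (n/N)^{\frac d2+1}\cdot(n/N)^{\ldots},
\end{equation*}
which I expect to equal $(n/N)^{\frac d2-1}\cdot(n/N)^{3-\frac d2}=(n/N)^{2}$ in total. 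Multiplying gives $(n/N)^{d-2}\cdot(n/N)^{c}\cdot(n/N)^{2}=(n/N)^{d+c}$, matching (\ref{ineq_1.9}).

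\textbf{Main obstacle.} The hardest step is making rigorous the conditional independence between the loop's crossing paths and the cluster. The cluster lives in $\mathcal{L}^{\mathrm{ran}(\ell)}$, which depends on $\ell$, and the density statement in Lemma \ref{lemma_cluster_dense} is for the unrestricted soup. I would handle this by monotonicity: a cluster of $\mathcal{L}^{\mathrm{ran}(\ell)}$ avoiding $\ell$ is contained in a cluster of $\mathcal{L}$. It is therefore enough that the $\mathcal{L}$-cluster containing it, intersected with $\mathrm{ran}(\ell)^c$, still blocks. This is exactly the event that $\eta_2$ avoids the full cluster, since $\ell$ avoids it.

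I would therefore bound the event that both $\eta_1$ and $\eta_2$ avoid a crossing cluster $\mathcal{C}$ of $\mathcal{L}$. Here $\eta_1,\eta_2$ are independent of $\mathcal{L}$ given the endpoints, which restores independence. The density step applies to $\eta_2$ and (\ref{newineq3.14}) to $\eta_1$.
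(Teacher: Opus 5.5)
Your skeleton matches the paper's proof, and your exponent count is right: loop mass $(n/N)^{d-2}$, a density factor $(n/N)^{c}$ from Lemma \ref{lemma_cluster_dense} against one crossing path, (\ref{newineq3.14}) against the other, and the normalization $(nN)^{-\frac d2}$ from (\ref{newineq3.3}). Applying Mecke to a single loop is also a good idea. It is cleaner than the paper's conditioning on all crossing loops $\mathcal{N}$, which forces a separate case $\mathsf{F}_x^{(1)}(N,n)$ (the disjoint cluster itself contains a crossing loop).

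Note, however, that Mecke applied to $\mathsf{F}_x(N,n)$ directly gives $\frac12\int\mu(\mathrm{d}\ell)\,\mathbb{P}(\text{some cluster of }\mathcal{L}\text{ crosses and is disjoint from }\mathrm{ran}(\ell))$, with $\mathcal{L}$ independent of $\ell$. This is because a cluster of $\mathcal{L}+\delta_\ell$ disjoint from $\ell$ is the same as a cluster of $\mathcal{L}$ disjoint from $\ell$. So drop the detour through $\mathcal{L}^{\mathrm{ran}(\ell)}$. Your justification for coming back from it is also false. A cluster $\mathcal{C}'$ of $\mathcal{L}^{\mathrm{ran}(\ell)}$ lies in a cluster $\mathcal{C}$ of $\mathcal{L}$ that may meet $\ell$. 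Since $\eta_2\cap\mathcal{C}'=\emptyset$ does not imply $\eta_2\cap\mathcal{C}=\emptyset$, density of $\mathcal{C}$ says nothing about $\mathcal{C}'$.

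The genuine gap is the passage from the crossing event to point-to-point sums. Inequality (\ref{newineq3.3}) concerns connections in $\mathcal{L}^D$ for a given set $D$.
\begin{itemize}
\item If you take $D=\mathrm{ran}(\ell)$, the summands are $\mathbb{P}(y\xleftrightarrow{(\mathrm{ran}(\ell))}z)$. These only see random-walk avoidance: in $d=5$ they are of order $N^{2-d}$ for a positive proportion of $\ell$ and $y$, since two walks miss each other with positive probability. Neither Lemma \ref{lemma_cluster_dense} nor (\ref{newineq3.14}) applies to clusters of the restricted soup. You lose the gain $(n/N)^{3-\frac d2}$ and end with bounds like $(n/N)^{\frac{3d}{2}-3}$, which is too weak because $d<6$.
\item Your other ordering, splitting on density of ``the crossing cluster'' before summing, is not supported either. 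The crossing cluster need not be unique, and (\ref{ineq_bound_5.2}) does not bound the probability that some crossing cluster is non-dense or avoids $\eta_1$.
\end{itemize}

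The missing step is the paper's conditioning on $\mathcal{C}_{\mathcal{N}}$ in (\ref{bound3.21})--(\ref{bound3.22}), adapted to your setting:
\begin{enumerate}
\item Let $D$ be $\mathrm{ran}(\ell)$ together with all clusters of $\mathcal{L}$ intersecting it.
\item Conditionally on $D$, the remaining loops have the law of $\mathcal{L}^D$, so (\ref{newineq3.3}) applies with this $D$.
\item The event $\{y\xleftrightarrow{(D)}z\}$ is then exactly $\{y\xleftrightarrow{}z,\ \mathcal{C}_y\cap\mathrm{ran}(\ell)=\emptyset\}$ for the full cluster $\mathcal{C}_y$.
\item Only at this pair level, split on the event that $y\xleftrightarrow{}z$ and $\mathcal{C}_y$ is not dense. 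Its probability is $\lesssim(n/N)^{10d}N^{2-d}$ by Lemma \ref{lemma_cluster_dense}.
\item On the complement, use density against one crossing path and (\ref{newineq3.14}) against the other.
\end{enumerate}
With this step inserted your argument closes, and it no longer needs the paper's $\mathsf{F}_x^{(1)}(N,n)$ case.
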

   \begin{proof}
   Let $\mathcal{N}$ denote the collection of loops crossing the annulus $B(x,N)\setminus B(x,n)$. We denote by $\mathsf{F}_x^{(1)}(N,n)$ the subevent of $\mathsf{F}_x(N,n)$ where the involved loop cluster is required to contain a loop in $\mathcal{N}$. Let $\mathsf{F}_x^{(2)}(N,n):=\mathsf{F}_x(N,n)\setminus \mathsf{F}_x^{(1)}(N,n)$.

  On the event $\mathsf{F}_x^{(1)}(N,n)$, one has $\#\mathcal{N}\ge 2$ (where $\# U$ denotes the cardinality of $U$). It follows from (\ref{ineq_crossing_loop}) that 
  \begin{equation}
  	\mathbb{P}\big( \#\mathcal{N} \ge 2 \big) \lesssim (n/N)^{2(d-2)}. 
  \end{equation}
  Moreover, by (\ref{newineq3.15}), the probability that two such loops are not connected by any loop cluster is $O((n/N)^{3-\frac{d}{2}})$. Therefore, 
  \begin{equation}\label{bound3.18}
  	\mathbb{P}\big( \mathsf{F}_x^{(1)}(N,n) \big) \lesssim (n/N)^{2(d-2)}\cdot (n/N)^{3-\frac{d}{2}}= (n/N)^{\frac{3d}{2}-1},
  \end{equation}
 where the exponent $\frac{3d}{2}-1>d$.

 In what follows, we estimate the probability of $\mathsf{F}_x^{(2)}(N,n)$. For any $y\in \hat{\partial}\hat{B}(x,n)$ and $z\in \hat{\partial}\hat{B}(x,N)$, we define the event 
    \begin{equation}\label{def_dense_event}
    	\mathsf{A}_{y,z}:= \{y\xleftrightarrow{}z, \mathcal{C}_y\ \text{is not}\ \cref{const_oneloop_onecluster}\text{-dense}   \}. 
    \end{equation}
  By Lemma \ref{lemma_cluster_dense}, one has
  \begin{equation}\label{bound320}
  	\mathbb{P}\big(\mathsf{A}_{y,z}    \big)   \lesssim (n/N)^{10d}\cdot N^{2-d}. 
  \end{equation} 
  On the event $\mathsf{F}_x^{(2)}(N,n)$, conditioned on all loops in $\mathcal{N}$ and the clusters containing them (we denote by $\mathcal{C}_{\mathcal{N}}$ the union of these clusters), the event $\{B(x,n)\xleftrightarrow{(\mathcal{C}_{\mathcal{N}})} \partial B(x,N)\}$ occurs (recall that this event means connectivity off $\mathcal{C}_{\mathcal{N}}$). In addition, using (\ref{newineq3.3}), one has 
  \begin{equation}\label{bound3.21}
  	\begin{split}
  	&	\mathbb{P}\big(B(x,n)\xleftrightarrow{(\mathcal{C}_{\mathcal{N}})} \partial B(x,N) \mid  \mathcal{C}_{\mathcal{N}}  \big) \\
  		\lesssim & (nN)^{-\frac{d}{2}} \sum\nolimits_{y\in \hat{\partial} \hat{B}(x,dn),y\in \hat{\partial} \hat{B}(x,d^{-1}N)}  \mathbb{P}\big( y  \xleftrightarrow{(\mathcal{C}_{\mathcal{N}})} z  \mid  \mathcal{C}_{\mathcal{N}} \big). 
  	\end{split}
  \end{equation} 
  As a result, we obtain 
  \begin{equation}\label{newbound330}
  		\mathbb{P}\big( \mathsf{F}_x^{(2)}(N,n)  \big) \lesssim (nN)^{-\frac{d}{2}} \sum\nolimits_{y\in \hat{\partial} \hat{B}(x,dn),y\in \hat{\partial} \hat{B}(x,d^{-1}N)}  \mathbb{P}\big( \mathcal{N}\neq \emptyset , y  \xleftrightarrow{( \mathcal{C}_{\mathcal{N}} )} z  \big).  
  \end{equation}
Meanwhile, by the restriction property, one has 
\begin{equation}\label{bound3.22}
	\mathbb{P}\big( \mathcal{N}\neq \emptyset , y  \xleftrightarrow{( \mathcal{C}_{\mathcal{N}} )} z  \big)  \le  \mathbb{P}\big(  y  \xleftrightarrow{} z, \exists \ell \in \mathcal{N}\ \text{such that}\  \mathcal{C}_y\cap \mathrm{ran}(\ell) =\emptyset   \big). 
\end{equation} 
Note that $\mathbb{P}( \mathcal{N}\neq \emptyset )\lesssim (n/N)^{d-2}$. In addition, every loop $\ell$ in $\mathcal{N}$ includes two random walk trajectories $\eta_1$ and $\eta_2$, whose joint distribution is comparable to that of two independent random walks starting some point in $\partial B(x,dn)$ and stopped upon hitting $\partial B(x,d^{-1}N)$ (see e.g., \cite[Lemma 6.3]{cai2025separation}). On $\mathsf{A}_{y,z}^c$, the conditional probability of $\{\mathcal{C}_y\cap \mathrm{ran}(\eta_1) =\emptyset  \}$ given $\{y\xleftrightarrow{} z\}$ is at most $(n/N)^{\cref{const_oneloop_onecluster}}$. Combined with (\ref{bound320}), it yields  
\begin{equation}\label{bound3.23}
	\begin{split}
		&  \mathbb{P}\big(  y  \xleftrightarrow{} z, \exists \ell \in \mathcal{N}\ \text{such that}\  \mathcal{C}_y\cap \mathrm{ran}(\ell) =\emptyset   \big) \\
		 \lesssim  & (n/N)^{10d} \cdot N^{2-d}+  (n/N)^{d-2+ \cref{const_oneloop_onecluster}} \mathbb{P}\big( y\xleftrightarrow{} z, \mathcal{C}_y\cap \mathrm{ran}(\eta_2)=\emptyset \big)  \\
		 \overset{(\ref{newineq3.14})}{\lesssim } & n^{2-d}(n/N)^{\frac{3d}{2}-1+\cref{const_oneloop_onecluster}}. 
	\end{split}
\end{equation} 
 Putting (\ref{newbound330}), (\ref{bound3.22}) and (\ref{bound3.23}) together, we obtain  
\begin{equation*}
	\mathbb{P}\big( \mathsf{F}_x^{(2)}(N,n)  \big) \lesssim 	(nN)^{-\frac{d}{2}} \cdot \big|\hat{\partial} \hat{B}(x,dn) \big|\cdot \big|\hat{\partial} \hat{B}(x,d^{-1}N)\big|\cdot n^{2-d}(n/N)^{\frac{3d}{2}-1+\cref{const_oneloop_onecluster}} \asymp (n/N)^{d+\cref{const_oneloop_onecluster}}. 
\end{equation*}
 This together with (\ref{bound3.18}) completes the proof of this lemma. 
 \end{proof}

 Next, we establish the estimate needed in (\ref{ineq1.11}). We define $\mathsf{G}_x(N,n)$ as the event that there exist three disjoint loop clusters crossing the annulus $B(x,N)\setminus B(x, n)$.

  \begin{lemma}\label{lemma_threecluster}
  Recall $\cref{const_oneloop_onecluster}$ in Lemma \ref{lemma_oneloop_onecluster}. Then for any $x\in \mathbb{R}^d$ and $N>n\ge 1$, 
    	 \begin{equation}\label{bound324}
  	 	\mathbb{P}\big( \mathsf{G}_x(N,n) \big) \lesssim \big( n/N \big)^{d+\cref{const_oneloop_onecluster}}. 
  	 \end{equation} 
    \end{lemma}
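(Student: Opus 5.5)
We follow the heuristic (\ref{ineq1.11}) and mirror the structure of the proof of Lemma \ref{lemma_oneloop_onecluster}. On $\mathsf{G}_x(N,n)$ there are three disjoint crossing clusters. Fix two of them, $\mathcal{C}^{(1)}$ and $\mathcal{C}^{(2)}$, and regard the third cluster as a crossing cluster in the complement of $\mathcal{C}^{(1)}\cup\mathcal{C}^{(2)}$. Two sources of bound are then available. The pair $\mathcal{C}^{(1)},\mathcal{C}^{(2)}$ realizes the two-arm event, whose probability is $O((n/N)^{\frac{d}{2}+1})$ by (\ref{two_arm1}). Given the pair, the third crossing must avoid a dense cluster, which is controlled by (\ref{bound_crossing_by_hitting}).

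\textbf{Step 1: reduce to dense clusters.} As in (\ref{def_dense_event}), define the bad event that some crossing cluster $\mathcal{C}_y$, with $y\in\hat\partial\hat B(x,n)$ connected to some $z\in\hat\partial\hat B(x,N)$, is not $\cref{const_oneloop_onecluster}$-dense. By (\ref{bound320}), each pair $(y,z)$ contributes $O((n/N)^{10d}N^{2-d})$. A union bound over $y$ and $z$ (using the same normalization as in (\ref{newineq3.3}), i.e., localizing crossing events at boundary points via (\ref{ineq_bound_5.2})) gives a total of $O((n/N)^{d+1})$. This requires a crude polynomial count of boundary points, which the $10d$ exponent absorbs. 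On the complement, every crossing cluster is $\cref{const_oneloop_onecluster}$-dense.

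\textbf{Step 2: explore two clusters and bound the third.} Explore the clusters of $\mathcal{L}^{\nu}$ crossing the annulus in some order, and stop once two disjoint crossing clusters $\mathcal{C}^{(1)}$ and $\mathcal{C}^{(2)}$ are found; this has probability $\lesssim (n/N)^{\frac{d}{2}+1}$ by Lemma \ref{lemma_twocluster}(1). Conditionally on the explored clusters $D:=\mathcal{C}^{(1)}\cup\mathcal{C}^{(2)}$, the restriction property says that the remaining loops form $\mathcal{L}^{\nu}\cdot\mathbbm{1}_{\mathrm{ran}(\ell)\cap D=\emptyset}$. A third crossing cluster therefore realizes $\{B(x,n)\xleftrightarrow{(D)}\partial B(x,N)\}$. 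Applying (\ref{bound_crossing_by_hitting}) with $D$ replaced by $D\cap[B(x,\frac{N}{2d})\setminus B(x,2dn)]$ (monotonicity allows shrinking the avoided set) together with the density (\ref{new3.5}) of $\mathcal{C}^{(1)}$, the conditional probability is at most $(n/N)^{\frac d2-1}\cdot(n/N)^{\cref{const_oneloop_onecluster}}$. Multiplying gives $(n/N)^{d+\cref{const_oneloop_onecluster}}$. Adding the Step 1 error $(n/N)^{d+1}$ proves (\ref{bound324}), since $\cref{const_oneloop_onecluster}<1$.

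\textbf{Main obstacle: justifying the conditioning in Step 2.} The two-arm probability and the density of $\mathcal{C}^{(1)}$ have to be combined under one exploration. The issue is that density is asserted for clusters conditioned on $y\leftrightarrow z$, not for clusters found by an exploration that also requires a second disjoint crossing. I would handle this as in (\ref{newbound330})--(\ref{bound3.23}): sum over boundary points $y_i,z_i$ for $i=1,2$ the probability of $\mathsf{H}^{y_1,z_1}_{y_2,z_2}$ intersected with the event that $\mathcal{C}_{y_1}$ is dense. The bad part is then dropped using the Step 1 bound, which does not need disjointness. On the good part, the third-cluster bound is conditional on the explored sets and uses only the density of $D$, which is measurable with respect to $D$. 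The sum of $\mathbb{P}(\mathsf{H}^{y_1,z_1}_{y_2,z_2})$ with the normalization from (\ref{ineq_bound_5.2}) is controlled by (\ref{newineq_3.13}) and returns $(n/N)^{\frac d2+1}$. The step needing care is checking that this normalization is consistent: each pair of boundary points carries a factor $(nN)^{-\frac d2}$ and boundary cardinalities $n^{d-1}$ and $N^{d-1}$, and together with (\ref{newineq_3.13}) these must reproduce the two-arm exponent $\frac d2+1$.
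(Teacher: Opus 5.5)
You have the good part right, and your route there is more direct than the paper's. Explore from $\hat\partial\hat B(x,n)$ until two disjoint crossing clusters are found (the paper's disjoint events $\mathsf{D}_{i_1,i_2}$). On the event that $\mathcal{C}^{(1)}$ is $\cref{const_oneloop_onecluster}$-dense, monotonicity in the avoided set and (\ref{bound_crossing_by_hitting}) bound the conditional probability of a third crossing by $(n/N)^{\frac d2-1+\cref{const_oneloop_onecluster}}$, and (\ref{two_arm1}) finishes.

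\textbf{The gap is Step 1, the control of non-dense clusters.} Lemma \ref{lemma_cluster_dense} only controls a cluster pinned at a fixed pair $(y,z)$, which gives (\ref{bound320}).
\begin{itemize}
\item A plain union bound over the $\asymp n^{d-1}N^{d-1}$ pairs gives $(n/N)^{10d}\,n^{d-1}N$. The factor $(n/N)^{10d}$ cannot absorb $n^{d-1}N$, which is not a function of $n/N$. At $n=N/\log N$ this bound diverges, and it does so for any fixed exponent $K$ in place of $10d$.
\item The normalization $(nN)^{-\frac d2}$ from (\ref{newineq3.3}) would fix the count, but (\ref{ineq_bound_5.2}) only concerns connection events $A\xleftrightarrow{(D)}\mathbf{B}_i$ with a fixed avoided set $D$. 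The event ``some crossing cluster is not dense'' is neither increasing nor decreasing and is not of that form, so you cannot localize it this way.
\item Your fallback in the last paragraph has the same defect. Bounding $\mathbb{P}(\mathsf{H}^{y_1,z_1}_{y_2,z_2}\cap\mathsf{A}_{y_1,z_1})$ by $\mathbb{P}(\mathsf{A}_{y_1,z_1})$ alone (``does not need disjointness'') and normalizing gives $(n/N)^{10d}n^{d-2}$, which again diverges at $n=N/\log N$.
\item In your exploration order the clusters $\mathcal{C}^{(1)},\mathcal{C}^{(2)}$ are found by exploration, not pinned to points. So there is no pair $(y_1,z_1)$ to which Lemma \ref{lemma_cluster_dense} can be applied.
\end{itemize}

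\textbf{How the paper avoids this.} It never localizes the density event.
\begin{itemize}
\item It first localizes the last-found crossing via (\ref{newineq3.3}), producing $\hat{\mathsf{G}}(y,z)$. It then re-explores with the pinned cluster $\mathcal{C}_y$ first and repeats twice more.
\item This yields $(nN)^{-\frac{3d}{2}}\sum\mathbb{P}(\bar{\mathsf{G}}(y_1,y_2,y_3;z_1,z_2,z_3))$, with all three clusters pinned to point pairs. Only then is $\mathsf{A}_{y_1,z_1}$ split off.
\item On $\mathsf{A}_{y_1,z_1}^c$, density together with (\ref{newineq3.8}) and (\ref{newineq_3.13}) gives (\ref{bound327}).
\item On $\mathsf{A}_{y_1,z_1}$, the three events occur disjointly, and BKR gives $\mathbb{P}(\mathsf{A}_{y_1,z_1})\,\mathbb{P}(y_2\leftrightarrow z_2)\,\mathbb{P}(y_3\leftrightarrow z_3)\lesssim (n/N)^{10d}N^{6-3d}$.
\end{itemize}
The two factors $N^{2-d}$ that come from disjointness are what make the normalized count a pure power of $n/N$:
\begin{equation*}
(nN)^{\frac{3d}{2}-3}\,(n/N)^{10d}N^{6-3d}=(n/N)^{\frac{23d}{2}-3}.
\end{equation*}
So disjointness is essential for the bad part. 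To repair your proof, pin all clusters to point pairs first, then split on density per tuple, using BKR on the non-dense part. Your simpler good-part argument cannot be kept unchanged, since it relies on density of an unpinned cluster.
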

  \begin{proof}
  	 We enumerate the points in $\hat{\partial}\hat{B}(x,n)$ by $\{w_i\}_{1\le i\le l}$. For $1\le i_1<i_2\le l-1$, we define $\mathsf{D}_{i_1,i_2}$ as the event that $w_{i_1}$ and $w_{i_2}$ are the only two points in $\{w_i\}_{1\le i\le i_2}$ that are connected to $\partial B(x,N)$ by loop clusters, and that $\mathcal{C}_{w_{i_1}}\cap \mathcal{C}_{w_{i_2}}=\emptyset$. On the event $\mathsf{G}_x(N,n)$, there exist $1\le i_1<i_2\le l-1$ such that $\mathsf{D}_{i_1,i_2}$ occurs and that a loop cluster in $\mathcal{L}^{\cup_{1\le i\le i_2}\mathcal{C}_{w_{i }}}$ crosses the annulus $B(x,N)\setminus B(x, n)$. As a result, the probability of $\mathsf{G}_x(N,n)$ is at most 
  	\begin{equation}
  		\begin{split}
  	  & \sum_{1\le i_1<i_2\le l-1}   \mathbb{E}\big[ \mathbbm{1}_{\mathsf{D}_{i_1,i_2} } \cdot \mathbb{P}\big(  B(x,n) \xleftrightarrow{(\cup_{1\le i\le i_2}\mathcal{C}_{w_{i }}) }  \partial  B(x,N)  \big)  \big] \\
  			\overset{(\ref{newineq3.3})}{\lesssim } &(nN)^{-\frac{d}{2}} \sum_{y\in \hat{\partial} \hat{B}(x,dn),z\in \hat{\partial} \hat{B}(x,d^{-1}N)} \sum_{1\le i_1<i_2\le l-1}   \mathbb{E}\big[ \mathbbm{1}_{\mathsf{D}_{i_1,i_2} } \cdot \mathbb{P}\big(  y \xleftrightarrow{(\cup_{1\le i\le i_2}\mathcal{C}_{w_{i }}) }  z  \big)  \big] \\
  			\le &  (nN)^{-\frac{d}{2}} \sum_{y\in \hat{\partial} \hat{B}(x,dn),z\in \hat{\partial} \hat{B}(x,d^{-1}N)}  \mathbb{P}\big( \hat{\mathsf{G}}(y,z)  \big),
  		\end{split}
  	\end{equation}
  	where $\hat{\mathsf{G}}(y,z)$ denotes the event that there exist three disjoint loop clusters, one connecting $y$ and $z$, and the other two crossing  $B(x,N)\setminus B(x, n)$. Repeating this argument twice, we obtain that $\mathbb{P} ( \mathsf{G}_x(N,n)  )$ is at most of order 
  	\begin{equation}\label{bound326}
  			 	   (nN)^{-\frac{3d}{2}}\sum_{y_1,y_2,y_3\in \hat{\partial} \hat{B}(x,dn),z_1,z_2,z_3\in \hat{\partial} \hat{B}(x,d^{-1}N)}  \mathbb{P}\big( \bar{\mathsf{G}}(y_1,y_2,y_3;z_1,z_2,z_3)  \big).
  	\end{equation}
  Here $\bar{\mathsf{G}}(y_1,y_2,y_3;z_1,z_2,z_3)$ denotes the event that there exist three disjoint loop clusters connecting $y_i$ to $z_i$ respectively for $i\in \{1,2,3\}$. On this event, if $\mathsf{A}_{y_1,z_1}^c$ also occurs (recall the definition of $\mathsf{A}_{y_1,z_1}$ from (\ref{def_dense_event})), then it follows from (\ref{newineq3.8}) that given the clusters $\mathcal{C}_{y_1}$ and $\mathcal{C}_{y_2}$, the conditional probability of $\{y_3\xleftrightarrow{} z_3\}$ is $O(n^{\cref{const_oneloop_onecluster}}N^{2-d-\cref{const_oneloop_onecluster}})$. Consequently,  
   \begin{equation}\label{bound327}
  	\begin{split}
  		 & \mathbb{P}\big( \bar{\mathsf{G}}(y_1,y_2,y_3;z_1,z_2,z_3), \mathsf{A}_{y_1,z_1}^c   \big) \\  
  		\lesssim    &  n^{\cref{const_oneloop_onecluster}}N^{2-d-\cref{const_oneloop_onecluster}} \mathbb{P}\big( \mathsf{H}^{y_1,z_1}_{y_2,z_2} \big)    \overset{(\ref{newineq_3.13})}{ \lesssim } (n/N)^{3-\frac{d}{2}+\cref{const_oneloop_onecluster}}  N^{6-3d}. 
  	\end{split}
  \end{equation}  
On the other hand, $\bar{\mathsf{G}}(y_1,y_2,y_3;z_1,z_2,z_3)\cap \mathsf{A}_{y_1,z_1}$ implies that $\mathsf{A}_{y_1,z_1}$, $\{y_2\xleftrightarrow{} z_2\}$ and $\{y_3 \xleftrightarrow{} z_3\}$ are certified by three disjoint collections of loops. Thus, by the BKR inequality (see e.g., \cite{arratia2018van}), we have 
\begin{equation}\label{bound328}
	\begin{split}
		& \mathbb{P}\big( \bar{\mathsf{G}}(y_1,y_2,y_3;z_1,z_2,z_3) , \mathsf{A}_{y_1,z_1}    \big)  \\
		\le & \mathbb{P}\big(  \mathsf{A}_{y_1,z_1}    \big) \cdot \mathbb{P}\big( y_2\xleftrightarrow{} z_2  \big) \cdot \mathbb{P}\big( y_3 \xleftrightarrow{} z_3  \big)  \overset{(\ref{lupu_two_point}),(\ref{bound320})}{ \lesssim}   (n/N)^{10d} N^{6-3d}.  
	\end{split}
\end{equation} 
Plugging (\ref{bound327}) and (\ref{bound328}) into (\ref{bound326}), we obtain that $\mathbb{P}  ( \mathsf{G}_x(N,n) )$ is at most 
  \begin{equation*}
  	\begin{split}
  		C (nN)^{-\frac{3d}{2}}\cdot \big(\big| \hat{\partial} \hat{B}(x,dn) \big| \cdot  \big| \hat{\partial} \hat{B}(x,d^{-1}N) \big| \big)^{3}\cdot  (n/N)^{3-\frac{d}{2}+\cref{const_oneloop_onecluster}}  N^{6-3d}\asymp (n/N)^{d+\cref{const_oneloop_onecluster}}, 
  	\end{split}
  \end{equation*}
 which gives the desired bound (\ref{bound324}).  
  \end{proof}

 \subsection{Crossing probabilities for massive loop soups}\label{section_control_massive}

 In this subsection, we consider loop soups on $\widetilde{\mathbb{Z}}^d$ with $3\le d\le 5$. We take $n\ll  m  \ll N   \ll M$ (where $a \ll b$ means $a \le cb$), and denote by $\mathcal{L}$ and $\mathcal{L}^{\nu}$ the loop soups of intensity $\frac{1}{2}$ on $\widetilde{\mathbb{Z}}^d$ and its variant with killing rate $\nu\in [0,N^{-2}]$ respectively. To show that the loops in $\mathcal{L}^{\nu}$ cannot form macroscopic clusters by themselves, we need the following lemma:

  
%
%
 
\begin{lemma}\label{lemma_crossing_massiveGFF}
	For any $d\ge 3$, there exists $\cl\label{const_massive_crossing}>0$ such that for any $M\ge 1$, 
	\begin{equation}
		\mathbb{P}\big( B( \tfrac{M}{2})  \xleftrightarrow{} \partial B( M) \big) \lesssim  M^{d}e^{-\cref{const_massive_crossing}\nu M^2}.
	\end{equation}
	Here $\xleftrightarrow{}$ denotes connectivity via loops in  $\mathcal{L}^{\nu}$.
\end{lemma}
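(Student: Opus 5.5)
Via the isomorphism theorem, I will replace the massive loop soup $\mathcal{L}^{\nu}$ on $\widetilde{\mathbb{Z}}^d$ by the massive GFF $\phi$ with covariance $G^{\nu}(\cdot,\cdot)$, the Green's function of the diffusion killed at the points $\bar{\mathfrak{c}}$. Clusters of $\mathcal{L}^{\nu}$ are then sign clusters of $\phi$. The key input is that $G^{\nu}$ decays exponentially at scale $\nu^{-1/2}$: $G^{\nu}(v,w)\lesssim (|v-w|+1)^{2-d} e^{-c\sqrt{\nu}|v-w|}$, which follows from standard killed random walk estimates, since the walk survives time $t$ with probability about $e^{-\nu t}$. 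The bound to prove is only interesting when $\nu M^2$ is large (otherwise $M^d e^{-c\nu M^2}\gtrsim 1$ is trivial). I would therefore aim for the estimate
\begin{equation*}
\mathbb{P}\big(B(\tfrac{M}{2})\xleftrightarrow{}\partial B(M)\big)\lesssim M^{d}e^{-c\nu M^2},
\end{equation*}
with the polynomial prefactor coming from a union bound over starting vertices.

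\textbf{Reduction to a point-to-set crossing.} Cover $B(\frac{M}{2})$ by the $O(M^d)$ unit edges/vertices of $\widetilde{\mathbb{Z}}^d$ and take a union bound. It then suffices to show that for each point $v$ with $|v|\le M/2$, the probability that the cluster of $v$ reaches distance $M/2$ from $v$ is $\lesssim e^{-c\nu M^2}$. (Edges add only $O(1)$ factors, using the metric-graph connectivity near a vertex.)

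\textbf{Bounding a single long arm.} A loop cluster of diameter $\ge M/2$ containing $v$ must contain a chain of loops. The cleanest route I see is a chaining/renormalization argument at scale $s:=\nu^{-1/2}$. Assume $s\le M$, otherwise the claim is trivial. Partition space into boxes of side $s$, with $K:=M/s=\sqrt{\nu}M$ boxes along the path. Each loop of $\mathcal{L}^{\nu}$ has duration with exponential tail at rate $\nu$, so the mass of loops of diameter $\ge r$ meeting a unit box is $\lesssim r^{2-d}e^{-c\nu r^2}$, as in (\ref{bound26}). The crossing then forces one of two things: a single loop of diameter $\ge \sqrt{M s}$-ish, or a long sequence of disjoint crossings of $s$-boxes. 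For the second case I would use the BKR inequality (as in Lemma \ref{lemma_threecluster}). Crossing an annulus of scale $s$ around a point has probability bounded by some $p<1$ once killing is present; indeed by (\ref{crossing_prob}) it is at most a constant strictly below one after iterating over a fixed number of dyadic scales. This yields $p^{K'}$ for a chain of length $K'$.

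\textbf{Main obstacle: getting $\nu M^2$ rather than $\sqrt{\nu}M$.} The chaining bound above gives only $e^{-c\sqrt{\nu}M}$, which is weaker than $e^{-c\nu M^2}$ when $\nu M^2\ge1$. So the crude renormalization must be refined. What I would try first is a direct second-moment/exploration bound using the Lupu two-point formula (\ref{lupu_two_point}), i.e.\ via (\ref{newineq3.3}):
\begin{equation*}
\mathbb{P}\big(B(\tfrac M2)\leftrightarrow\partial B(M)\big)\lesssim M^{-d}\sum_{y,z}\mathbb{P}(y\leftrightarrow z)\lesssim M^{-d}\cdot M^{2(d-1)}\max_{|y-z|\gtrsim M}G^{\nu}(y,z).
\end{equation*}
Here $G^{\nu}(y,z)$ is bounded by a heat-kernel integral $\int_0^\infty t^{-d/2}e^{-|y-z|^2/t-\nu t}\,dt$. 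Splitting at $t=M^2$ gives a contribution $e^{-\nu M^2}$ from $t\ge M^2$, and $e^{-cM^2/t}$ for small $t$. Bounding the latter by $e^{-c\nu M^2}$ requires $\nu\le 1$, which holds since $\nu\le N^{-2}$. This produces the exponent $\nu M^2$ (up to the trivial regime) with polynomial prefactor $M^{d}$, matching the statement. The delicate step is justifying the one-point-to-set reduction (\ref{newineq3.3}) on $\widetilde{\mathbb{Z}}^d$ with killing. I would rederive it from the GFF first-passage/harmonic-average argument, which holds for any killing.
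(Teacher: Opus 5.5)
\textbf{Same route as the paper, but the last step fails.} Your final route is the paper's proof. It takes a union bound over pairs of lattice points $y$ near $\partial B(\frac M2)$ and $z$ near $\partial B(M)$. It then uses the arcsine formula with $G^{\nu}(y,y)\asymp G^{\nu}(z,z)\asymp 1$ (valid since $\nu\le 1$) to get $\mathbb{P}(y\leftrightarrow z)\lesssim G^{\nu}(y,z)\asymp \widetilde{\mathbb{P}}_y(\widetilde\tau_z<\widetilde\zeta)$. It ends with a decay estimate for the killed walk.

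The paper only uses the crude count of $O(M^{2d-2})$ pairs. You do not need \eqref{newineq3.3}, which in Lemma~\ref{lemma_regularity} is stated only for $\nu\le N^{-2}$ and $N\ge 10d^2n$, i.e.\ in the regime where the claim is trivial.

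The gap is the last step. Splitting $\int_0^\infty t^{-d/2}e^{-M^2/t-\nu t}\,\mathrm{d}t$ at $t=M^2$ does not give $e^{-c\nu M^2}$. For $t\le M^2$, the inequality $e^{-cM^2/t}\le e^{-c\nu M^2}$ holds only when $t\le \nu^{-1}$; the condition $\nu\le 1$ is not enough. When $\nu M^2>1$, the window $[\nu^{-1},M^2]$ contains the minimizer $t_*=M/\sqrt{\nu}$ of $M^2/t+\nu t$, where the integrand equals $t_*^{-d/2}e^{-2\sqrt{\nu}M}$. So the integral is of order $e^{-c\sqrt{\nu}M}$ up to polynomial factors. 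This is exactly the correlation length $\nu^{-1/2}$ you correctly identified in your first paragraph.

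\textbf{The exponent $\nu M^2$ is unreachable.} This cannot be repaired, because $\sqrt{\nu}M$ is the true exponent. Take $k\asymp M/\sqrt{\nu}$ jumps and a lattice point $z\notin B(M)$ with $|z|\le 2M$. The walk from $\bm{0}$ sits at $z$ after $k$ jumps with probability $\gtrsim k^{-d/2}e^{-CM^2/k}$, and it survives the killing with probability at least $(1+\nu/d)^{-k}\ge e^{-\nu k/d}$. Hence $\widetilde{\mathbb{P}}_{\bm{0}}(\widetilde\tau_z<\widetilde\zeta)\gtrsim k^{-d/2}e^{-C\sqrt{\nu}M}$. Since $\arcsin x\ge x$, the arcsine formula gives
$\mathbb{P}(B(\frac M2)\leftrightarrow\partial B(M))\ge\mathbb{P}(\bm{0}\leftrightarrow z)\gtrsim (M/\sqrt{\nu})^{-d/2}e^{-C\sqrt{\nu}M}$.
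For fixed $\nu$ and $M\to\infty$, this exceeds $M^{d}e^{-c\nu M^2}$.

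So the ``main obstacle'' you flagged is a real obstruction, not a weakness of chaining. Your renormalization heuristic with boxes of side $\nu^{-1/2}$ was pointing at the right answer.

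\textbf{The paper has the same error, and the fix is harmless.} The paper's proof asserts $\mathbb{P}_y(\tau_z<\infty)\lesssim M^{2-d}e^{-c\nu M^2}$ without justification, so it contains the same error. What the argument, yours or the paper's, actually proves is $\lesssim M^{d}e^{-c\sqrt{\nu}M}$, and the lemma should be stated in that form. The corrected bound suffices for its only use, \eqref{boundK1} via Corollary~\ref{coro3.11}. There $M\asymp\delta^{\bref{para_error}-1}$ and $\nu=\delta^{2(1-\bref{para_3})}$, so $\sqrt{\nu}M\asymp\delta^{\bref{para_error}-\bref{para_3}}$ still blows up polynomially in $\delta^{-1}$. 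The term therefore remains super-polynomially small.
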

\begin{proof}
	Applying the union bound, one has 
	\begin{equation}\label{a330}
	\begin{split}
				  \mathbb{P}\big( B( \tfrac{M}{2})  \xleftrightarrow{} \partial B( M)  \big) 
				 \le &M^{2d-2}  \max_{y \in \hat{\partial}\hat{B}( \frac{M}{2}), z\in \hat{\partial}\hat{B}( M) }\mathbb{P}\big( y\xleftrightarrow{} z \big). 
	\end{split}
	\end{equation}
	By (\ref{lupu_two_point}) and $G(y,y)\asymp G(z,z)\asymp 1$ (which follows from $\nu\in [0,1]$), we have 
	\begin{equation}
		\mathbb{P}\big( y\xleftrightarrow{} z \big) \lesssim \mathbb{P}_y(\tau_z<\infty) \lesssim  M^{2-d}e^{-c\nu M^2}  
	\end{equation}
	for $y \in \hat{\partial}\hat{B}( \frac{M}{2})$ and $z\in \hat{\partial}\hat{B}( M)$. 
	Combined with (\ref{a330}), it completes the proof. 
\end{proof}

  As explained in Section \ref{subsection2.1_sketch}, for a macroscopic loop cluster, we will use the collection of killed loops it contains to approximate the entire cluster. Although these killed loops are typically large, there are still many of relatively small diameters. One of our tasks is to show that the contribution of these small loops is negligible. To be precise, we define 
\begin{equation}
  \widehat{\mathcal{L}}:=(\mathcal{L}-\mathcal{L}^{\nu}) \cdot \mathbbm{1}_{|\ell|\ge m},\ \widecheck{\mathcal{L}}:=(\mathcal{L}-\mathcal{L}^{\nu}) \cdot \mathbbm{1}_{|\ell|< m}\  \text{and}\   \overline{\mathcal{L}}:=\mathcal{L}-\widehat{\mathcal{L}}.  
\end{equation}
  For $\alpha\ge 0$, we define $\widecheck{\mathcal{L}}_{\alpha}$ as the analogue of $\widecheck{\mathcal{L}}$ obtained by replacing the intensity $1/2$ with $\alpha$. Let $\overline{\mathcal{L}}_{\alpha}:= \mathcal{L}^{\nu}+\widecheck{\mathcal{L}}_{\alpha}$ (the subscript $\alpha$ of $\overline{\mathcal{L}}_{\alpha}$ does not simply refer to the underlying intensity; rather, $\overline{\mathcal{L}}_{\alpha}$ consists of loops from $\mathcal{L}^{\nu}$ with intensity $1/2$ together with loops from $\widecheck{\mathcal{L}}_{\alpha}$ with intensity $\alpha$). Note that $\overline{\mathcal{L}}_{0}= \mathcal{L}^{\nu}$ and $\overline{\mathcal{L}}_{1/2}=\overline{\mathcal{L}}$. The following lemma provides the key estimate to control the influence of $\widecheck{\mathcal{L}}$.



        \begin{lemma}\label{lemma3.10_two_arm}
        	We retain the notations above. Let $\cl\label{const_check_loop}:=\cref{const_pivotal_3}\land \cref{const_coro_density}$. Then we have 
        	\begin{equation}\label{ineq_two_arm_310}
        		\mathbb{P}\big(  \mathsf{H}_{\alpha} \big) \lesssim \frac{n^{d-2+\cref{const_check_loop}}}{M^{d-2}N^{\cref{const_check_loop}}},\ \ \forall 0\le \alpha \le \tfrac{1}{2},  
        	\end{equation}
        	where $\mathsf{H}_{\alpha}$ denotes the event that there exist two disjoint clusters in $\overline{\mathcal{L}}_{\alpha}$ crossing the annulus $B( M)\setminus B( n)$.  
        \end{lemma}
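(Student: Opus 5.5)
We reduce the two-arm estimate for the composite soup $\overline{\mathcal{L}}_{\alpha}$ to the massive soup $\mathcal{L}^{\nu}=\overline{\mathcal{L}}_0$, and then treat the small killed loops $\widecheck{\mathcal{L}}_\alpha$ as a perturbation. The constants $\cref{const_pivotal_3}$ and $\cref{const_coro_density}$ in the statement indicate two ingredients: a pivotality (or differential) inequality in $\alpha$, and a density estimate for clusters of the massive soup.

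\textbf{Step 1: the massive two-arm bound.} The first goal is to show that for $\mathcal{L}^{\nu}$,
\begin{equation*}
\mathbb{P}(\mathsf{H}_0)\lesssim (n/N)^{\frac d2+1}\cdot \mathbb{P}\big(B(N)\xleftrightarrow{}\partial B(M)\big)^{2}\quad\text{(schematically)}.
\end{equation*}
The factor $(n/N)^{\frac d2+1}$ comes from Lemma \ref{lemma_twocluster}(1), applied at scales up to $N$. Beyond scale $N$ there is killing at rate $\nu\le N^{-2}$. Here we use Lemma \ref{lemma_crossing_massiveGFF}: each further dyadic crossing at scale $2^jN$ costs a factor $e^{-c4^j\nu N^2}$ on top of the polynomial decay. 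Quasi-multiplicativity (\ref{ineq_quasi}) and the BKR inequality let us glue the inner two-arm event to the outer crossings. Using $\nu\asymp N^{-2}$, the outer part is summable and gives at most $(N/M)^{d-2}$ times a factor whose decay is superpolynomial in $M/N$. Combining, and using $d/2+1\ge d-2$ for $d\le 5$ together with the room in exponents, gives $n^{d-2+c}M^{2-d}N^{-c}$.

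\textbf{Step 2: adding $\widecheck{\mathcal{L}}_\alpha$ via Russo-type differentiation.} Since $\widecheck{\mathcal{L}}_\alpha$ is Poisson with intensity $\alpha\cdot(\mu-\mu^\nu)\mathbbm 1_{|\ell|<m}$, Margulis--Russo for Poisson processes gives
\begin{equation*}
\tfrac{d}{d\alpha}\mathbb{P}(\mathsf{H}_\alpha)=\int \big[\mathbb{P}(\mathsf{H}_\alpha\text{ for }\overline{\mathcal{L}}_\alpha+\ell)-\mathbb{P}(\mathsf{H}_\alpha)\big]\,\mathrm{d}\widecheck\mu(\ell).
\end{equation*}
Adding a loop can only merge clusters. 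It can create $\mathsf{H}$ only in one way: the loop $\ell$ (of diameter $<m$, killed, hence of mass $\lesssim\nu m^2$ per unit-scale location by (\ref{bound26})) must be pivotal. That is, it joins two partial arms, so that near $\ell$ there are three or four arms in an annulus $B(y,|\ell|^{\cdot})$. This is where a pivotal estimate, presumably the lemma giving $\cref{const_pivotal_3}$, bounds the probability of a pivotal at $y$. The density lemma (Lemma \ref{lemma_cluster_dense} / $\cref{const_coro_density}$) shows that arms near a small loop are dense, so a local small loop rarely changes the number of disjoint crossing clusters.

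\textbf{Step 3: integration.} Summing over locations $y\in B(M)$ and loop sizes $<m\ll N$, and using the weight $\nu m^2\le (m/N)^2$, gives a Gr\"onwall-type inequality
\begin{equation*}
\tfrac{d}{d\alpha}\mathbb{P}(\mathsf{H}_\alpha)\lesssim (m/N)^{c}\,\mathbb{P}(\mathsf{H}_\alpha)+\text{RHS of (\ref{ineq_two_arm_310})}.
\end{equation*}
Integrating over $\alpha\in[0,\tfrac12]$ yields the bound uniformly in $\alpha$.

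\textbf{Main obstacle.} The hardest step is the pivotal bound in Step 2. We need, uniformly in $\alpha$, a local four-arm or three-arm estimate around a small loop that beats its loop-measure volume factor $\sim m^{d}$ per box. The first approach to try is the method of Lemma \ref{lemma_threecluster}: decompose through (\ref{newineq3.3}), apply the density event $\mathsf{A}^c$ to gain $(n/N)^{c}$, and use BKR on the remaining disjoint arms.
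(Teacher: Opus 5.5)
\textbf{There is a genuine gap, in Steps 2--3.} The event $\mathsf{H}_\alpha$ is not monotone in $\alpha$. A loop of $\widecheck{\mathcal{L}}_\alpha$ can merge two crossing clusters, which destroys $\mathsf{H}$. It can also join an inner partial arm to an outer partial arm, creating a second crossing cluster and hence creating $\mathsf{H}$. The positive part of $\frac{d}{d\alpha}\mathbb{P}(\mathsf{H}_\alpha)$ is therefore carried by a pivotal configuration for $\overline{\mathcal{L}}_\alpha$: an inner arm, an outer arm, and a third disjoint crossing cluster. By construction this configuration lies outside $\mathsf{H}_\alpha$.

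Your inequality $\frac{d}{d\alpha}\mathbb{P}(\mathsf{H}_\alpha)\lesssim (m/N)^c\,\mathbb{P}(\mathsf{H}_\alpha)+\cdots$ is only asserted. Proving it would require a three-arm/pivotal estimate for the non-monotone soup $\overline{\mathcal{L}}_\alpha$, strong enough to beat the $\asymp (M/r)^d$ positions of a loop of size $r$. That is harder than the lemma itself. In the paper the Russo argument runs the other way: Lemma \ref{lemma3.10_two_arm} is the input that controls pivotal small loops in Corollary \ref{coro3.11}.

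You have also misread the constants. $\cref{const_pivotal_3}$ comes from Lemma \ref{newlemma_pivotal}, which does not bound the probability of a pivotal at a point. It says that, given $B(n)\xleftrightarrow{}\partial B(M)$ in $\mathcal{L}$, outside probability $(n/N)^{\cref{const_pivotal_3}}$ a positive fraction of the scales up to $N$ carry a pivotal massive cluster of capacity $\gtrsim r_{2i}^{d-2}$. $\cref{const_coro_density}$ is the density constant of Corollary \ref{coro_density}, not of Lemma \ref{lemma_cluster_dense}.

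\textbf{The missing idea is that no perturbation in $\alpha$ is needed.} Those pivotal massive clusters consist of loops of $\mathcal{L}^{\nu}$. So they lie inside every crossing cluster of any $\mathcal{L}'$ with $\mathcal{L}^{\nu}\le\mathcal{L}'\le\mathcal{L}$, which is why Corollary \ref{coro_density} applies uniformly to $\mathcal{L}'=\overline{\mathcal{L}}_\alpha$. Let $\mathsf{A}$ be the event that some crossing cluster of $\overline{\mathcal{L}}_\alpha$ is not $\cref{const_coro_density}$-dense.
\begin{itemize}
\item On $\mathsf{H}_\alpha\cap\mathsf{A}$, the events $\mathsf{A}$ and $\{B(n)\xleftrightarrow{}\partial B(M)\}$ occur disjointly. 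BKR, Corollary \ref{coro_density} and (\ref{crossing_prob}) then give the bound with exponent $\cref{const_pivotal_3}$.
\item On $\mathsf{H}_\alpha\cap\mathsf{A}^c$, explore the clusters of the points of $\hat{\partial}\hat{B}(n)$ in order until the first one reaches $\partial B(M)$. By the restriction property and (\ref{bound_crossing_by_hitting}), a second crossing that avoids this dense cluster costs $(n/M)^{\frac d2-1}(n/N)^{\cref{const_coro_density}}$. Summing over the exploration contributes one more factor $\mathbb{P}(B(n)\xleftrightarrow{}\partial B(M))\lesssim (n/M)^{\frac d2-1}$.
\end{itemize}
This treats every $\alpha\in[0,\tfrac12]$, including $\alpha=0$, at once, so your Step 1 is unnecessary. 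As written it is also imprecise. Only $\nu\le N^{-2}$ is assumed, possibly $\nu=0$, so the superpolynomial decay from Lemma \ref{lemma_crossing_massiveGFF} is not available. And (\ref{ineq_quasi}) decouples single connections, not a two-arm event across scale $N$.
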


 \noindent  (P.S. At first sight, it may seem puzzling that the event $\mathsf{H}_{\alpha}$ in (\ref{ineq_two_arm_310}) does not depend on $N$, whereas the bound on the right-hand side does. This arises because the capacity estimates for clusters in $\mathcal{L}^{\nu}$ (which is the main component of $\overline{\mathcal{L}}_{\alpha}$) are effective only up to scale $N$ (since $\nu\in[0,N^{-2}]$; recall Lemma 3.3). We therefore consider the mutual constraints between the two disjoint clusters of $\overline{\mathcal{L}}_{\alpha}$ only up to scale $N$, which accounts for the appearance of $N$ in the bound.)


Before proving Lemma \ref{lemma3.10_two_arm}, we first present its application.    Using Lemmas \ref{lemma_crossing_massiveGFF} and \ref{lemma3.10_two_arm}, we obtain the following estimate for the crossing probability of $\overline{\mathcal{L}}$. In the subsequent proof, this estimate will be used to bound the probability that $\overline{\mathcal{L}}$ contains a large cluster.  

     \begin{corollary}\label{coro3.11}
        	Under the same conditions as in Lemma \ref{lemma3.10_two_arm}, we have 
        	\begin{equation}\label{bound340}
        		\mathbb{P}\big(    B( \tfrac{M}{2}) \xleftrightarrow{ \overline{ \mathcal{L}} } \partial B( M)  \big) \lesssim M^{d}e^{-\cref{const_massive_crossing}\nu M^2}+\frac{M^2m^{\cref{const_check_loop}}}{N^{2+\cref{const_check_loop}}}. 
        	\end{equation}
        \end{corollary}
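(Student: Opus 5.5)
The natural strategy is to interpolate in $\alpha$ from $\overline{\mathcal{L}}_0=\mathcal{L}^{\nu}$ to $\overline{\mathcal{L}}_{1/2}=\overline{\mathcal{L}}$. At $\alpha=0$ the crossing probability is controlled by Lemma \ref{lemma_crossing_massiveGFF}. The increase as $\alpha$ grows to $1/2$ comes from adding the killed loops of diameter $<m$, and we bound it by a Russo-type derivative formula for the Poisson intensity. Concretely, set
$$f(\alpha):=\mathbb{P}\big(B(\tfrac{M}{2})\xleftrightarrow{\overline{\mathcal{L}}_\alpha}\partial B(M)\big).$$
Since $\widecheck{\mathcal{L}}_\alpha$ is a Poisson process with intensity $\alpha\,\mu\cdot\mathbbm{1}_{\{\ell\ \text{killed},\,|\ell|<m\}}$, the Margulis--Russo formula for Poisson processes gives
$$f'(\alpha)=\int \mathbb{P}\big(\ell\ \text{is pivotal for the crossing in}\ \overline{\mathcal{L}}_\alpha\big)\,\mathrm{d}\mu^{\mathrm{kill}}_{<m}(\ell),$$
where "$\ell$ is pivotal" means the crossing fails without $\ell$ but holds once $\ell$ is added. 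The crossing event is increasing, so $f'\ge0$, and it suffices to bound $f(1/2)-f(0)\le\frac12\sup_{\alpha}f'(\alpha)$.

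Next I would bound the pivotal probability for a loop $\ell$ of diameter $<m$ located near a point $z$, split by the position of $z$.

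\emph{Case $z\in B(\tfrac{3M}{4})\setminus B(\tfrac{M}{2}-m)$ or similar interior positions.} Here pivotality forces two disjoint clusters of $\overline{\mathcal{L}}_\alpha\setminus\{\ell\}$ crossing from $\partial B(z,m)$ to distance of order $M$: one reaches $B(M/2)$ and the other reaches $\partial B(M)$. In that case I would apply Lemma \ref{lemma3.10_two_arm}, centred at $z$, with inner radius $m$ and outer radius of order $M$. The required uniformity in the centre should follow from the same proof, since the lemma is stated at the origin on $\widetilde{\mathbb{Z}}^d$, which is translation invariant. This gives a pivotal probability $\lesssim m^{d-2+c}M^{2-d}N^{-c}$, where $c$ denotes $\cref{const_check_loop}$.

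\emph{Case $z$ within distance $m$ of $\partial B(M/2)$ or $\partial B(M)$.} This case needs only one long arm, so I would treat it in the same way with an adjusted annulus, using half-space-type bounds or simply absorbing it through the one-arm estimate.

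Integrating over the killed loops: the $\mu$-mass of killed loops of diameter in $[2^j,2^{j+1})$ rooted near a fixed unit cell is about $2^{-jd}\cdot\nu 2^{2j}$, since the mass of loops of diameter $\asymp r$ per unit cell is $r^{-d}$ by (\ref{bound2.12}) rescaled, and the killing probability is $O(\nu r^2)$ as in (\ref{bound26}). Summing over $M^d$ cells and dyadic $r\le m$, with $\nu\le N^{-2}$, gives
$$\sup_\alpha f'(\alpha)\lesssim M^d\sum_{r\le m}r^{-d}\,\nu r^{2}\cdot r^{d-2+c}M^{2-d}N^{-c}\lesssim M^2 N^{-2}m^{c}N^{-c}.$$
This is exactly the second term in (\ref{bound340}). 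Adding $f(0)\le CM^de^{-c'\nu M^2}$ from Lemma \ref{lemma_crossing_massiveGFF}, with $c'=\cref{const_massive_crossing}$, completes the argument.

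The main obstacle is justifying the pivotality reduction uniformly in $\alpha$ and in the location of $\ell$. In particular, one must check that removing a loop of diameter $<m$ destroys the crossing only through two disjoint crossing clusters of the annulus around it, so that Lemma \ref{lemma3.10_two_arm} applies with inner radius comparable to the loop's diameter and not to $m$, which is what makes the dyadic sum converge. Boundary effects near $\partial B(M)$ are the secondary concern; if they cause trouble, I would enlarge the outer radius slightly and use monotonicity of crossing events.
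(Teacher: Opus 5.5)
Your proposal is correct and is essentially the paper's proof. Both use Russo's formula in $\alpha$ from $\overline{\mathcal{L}}_0=\mathcal{L}^{\nu}$ to $\overline{\mathcal{L}}_{1/2}=\overline{\mathcal{L}}$, and split the pivotal small killed loops dyadically into loops of diameter $\ge 2^k/10$ inside balls $B(y,2^k)$ with $2^k\le 10m$, each of probability $\lesssim \nu 2^{2k}$ by (\ref{bound26}). Both then apply Lemma \ref{lemma3.10_two_arm} on $B(y,cM)\setminus B(y,2^k)$, sum over the $\asymp (M/2^k)^d$ centres, and use Lemma \ref{lemma_crossing_massiveGFF} at $\alpha=0$.

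The only difference is that the paper does not single out loops near $\partial B(\tfrac{M}{2})$ or $\partial B(M)$; it applies the two-arm bound at every centre. If you do treat those loops separately, you will need an arm estimate for clusters that avoid the nearby sphere. The plain one-arm bound $(2^k/M)^{\frac{d}{2}-1}$, summed over the $\asymp (M/2^k)^{d-1}$ boundary balls, does not yield (\ref{bound340}) when $d\in\{4,5\}$.
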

        \begin{proof}
        For each $k\in \mathbb{N}^+$ with $2^{k}\le 10m$, let $\{y^j_{k}\}_{1\leq j\leq l_k}$ be the collection of points in $2^k\cdot \mathbb{Z}^d$ such that $B(y^j_{k},2^k)$ intersects $B( M )\setminus B(  \frac{M}{2})$. We define the event  
         \begin{equation}
         	\mathsf{A}_k^j:= \big\{  \exists \ell \in \mathcal{L}-\mathcal{L}^{\nu}\ \text{such that}\ \mathrm{ran}(\ell)\subset B(y^j_{k},2^k) , |\ell| \ge \tfrac{2^k}{10} \big\}.
         \end{equation}
         Here $|\ell|$ denotes the Euclidean diameter of the loop $\ell$.  We also define $\mathsf{H}_k^j$ as the event that there exist two disjoint clusters in $\overline{\mathcal{L}}_{\alpha}$ crossing $B(y^j_{k},cM)\setminus B(y^j_{k},2^k)$.

        By Russo's formula, the difference 
        \begin{equation}
        	\mathbb{P}\big( B( \tfrac{M}{2}) \xleftrightarrow{ \overline{ \mathcal{L}} } \partial B( M)\big) - \mathbb{P}\big( B( \tfrac{M}{2}) \xleftrightarrow{   \mathcal{L}^{\nu}} \partial B( M) \big) 
        \end{equation}
        is bounded by the supremum over $\alpha\in [0,1/2]$ of the expected total mass of the loops in $\widecheck{\mathcal{L}}$ that are pivotal for the event $\{B( \tfrac{M}{2}) \xleftrightarrow{ \overline{ \mathcal{L}}_{\alpha} } \partial B( M)\}$. By the union bound, this total mass is bounded from above by 
        \begin{equation}\label{ineq344}
        \begin{split}
           & \sum\nolimits_{k\ge 1: 2^k\le 10m} \sum\nolimits_{1\le j\le l_k} \mathbb{P}\big( \mathsf{A}_k^j\big) \cdot  \mathbb{P}\big(\mathsf{H}_k^j  \big)   \\
   \overset{(\ref{bound26}), (\ref{ineq_two_arm_310})}{\lesssim } &   \sum\nolimits_{k\ge 1: 2^k\le 10m} \sum\nolimits_{1\le j\le l_k}   2^{2k}N^{-2}\cdot \frac{2^{(d-2+\cref{const_check_loop})k}}{M^{d-2}N^{\cref{const_check_loop}}}  \\
   \overset{l_k\asymp (M/2^k)^d}{\asymp} & \frac{M^2}{N^{2+\cref{const_check_loop}}}  \sum\nolimits_{k\ge 1: 2^k\le 10m}   2^{\cref{const_check_loop}k} \asymp \frac{M^2m^{\cref{const_check_loop}}}{N^{2+\cref{const_check_loop}}}. 
        \end{split}
         \end{equation} 
       Combined with Lemma \ref{lemma_crossing_massiveGFF}, this gives (\ref{bound340}). 
        \end{proof}

 We now turn to the proof of Lemma \ref{lemma3.10_two_arm}. The key is to show that for a cluster $\mathcal{C}$ in $\mathcal{L}$ crossing an annulus, with high probability $\mathcal{C}$ is sufficiently dense that a random walk crossing the same annulus has only polynomially small probability of avoiding all massive loops in $\mathcal{L}^{\nu}$ contained in $\mathcal{C}$. To this end, we need to prove the following analogue of \cite[(3.3)]{cai2025gap}. Specifically, we take a large constant $C_\dagger>0$ and denote $r_i:=C_{\dagger}^in$  for each $i\in \mathbb{N}$. We define $i_\star:=\min\{i\ge 1: r_{2i+3}\ge N \}$. Note that $i_{\star} \asymp \log(N/n)$. For any $i\in \mathbb{N}$ and $a>0$, let $\mathsf{V}^i_a$ be the event that there exists $z \in  \hat{B}( 2r_{2i+1})\setminus \hat{B}( \frac{1}{2}r_{2i+1})$ such that the cluster $\mathcal{C}_z^{\nu,\partial B(z,r_{2i})}$ satisfies $\mathrm{cap}(\mathcal{C}_z^{\nu,\partial B(z,r_{2i})})\ge ar_{2i}^{d-2}$ and is pivotal for the event $\{B( n)\xleftrightarrow{}\partial B( M)\}$ (i.e., its removal changes whether $\{B( n)\xleftrightarrow{}\partial B( M)\}$ occurs). Here the notation $\xleftrightarrow{}$ represents connectivity via loops in the loop soup $\mathcal{L}$ on $\widetilde{\mathbb{Z}}^d$; this convention also applies to the following lemma. We then define the quantity $\mathbf{V}_a:= \sum\nolimits_{1\le i\le i_{\star}} \mathbbm{1}_{\mathsf{V}^i_a}$. The following lemma shows that with high probability, $\mathsf{V}_a^i$ occurs for a positive proportion of the scales $1\le i\le i_\star$.


\begin{lemma}\label{newlemma_pivotal}
We retain the notation above. Then there exist $\cl\label{const_pivotal_1},\cl\label{const_pivotal_2},\cl\label{const_pivotal_3}>0$ such that  
	\begin{equation}\label{ineq_lemma_pivotal}
		\mathbb{P}\big(   \mathbf{V}_{\cref{const_pivotal_1}} \le \cref{const_pivotal_2} i_{\star}     \mid  B( n)\xleftrightarrow{} \partial B( M)  \big) \le  (n/N)^{\cref{const_pivotal_3}}. 
	\end{equation}
\end{lemma}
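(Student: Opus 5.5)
We prove (\ref{ineq_lemma_pivotal}) by the same multiscale strategy as Lemma \ref{lemma_cluster_dense}: show that at each scale $i$, conditionally on the configuration at scales $\le 2i$, the event $\mathsf{V}^i_{c_{\ref*{const_pivotal_1}}}$ occurs with uniformly positive probability. Then apply a Chernoff-type bound over the $i_\star\asymp\log(N/n)$ scales. Since $\mathsf{V}^i_a$ involves pivotality for the global event $\{B(n)\xleftrightarrow{}\partial B(M)\}$, we cannot condition on a single excursion as before. Instead we follow the scheme of \cite[(3.3)]{cai2025gap}: use Lemma \ref{lemma_switching} to describe the conditioned cluster, and exploit the quasi-multiplicativity (\ref{ineq_quasi}) to decouple scales.

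\textbf{Step 1 (reduction to point-to-point connection).} By (\ref{ineq_bound_5.2}) and (\ref{crossing_prob}), up to constants the conditional law given $\{B(n)\xleftrightarrow{}\partial B(M)\}$ is dominated by an average over the conditional laws given $\{y\xleftrightarrow{}z\}$, with $y\in\hat\partial\hat B(dn)$ and $z\in\hat\partial\hat B(d^{-1}M)$. It therefore suffices to prove
\[
\mathbb{P}\big(\mathbf V_{c_{\ref*{const_pivotal_1}}}\le c_{\ref*{const_pivotal_2}}i_\star \,\big|\, y\xleftrightarrow{}z\big)\lesssim (n/N)^{c_{\ref*{const_pivotal_3}}}
\]
uniformly in $y,z$. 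Under this conditioning, Lemma \ref{lemma_switching} says the cluster contains the odd Poisson family of excursions from $y$ to $z$. With high probability there is an odd number of crossings of each annulus $B(r_{2i+2})\setminus B(r_{2i})$; with probability bounded below (uniformly in $i$) exactly one excursion crosses it, and all remaining loops of $\mathcal{L}^{\{y,z\}}$ crossing $B(r_{2i+2})\setminus B(r_{2i})$ are absent. The cost of this last requirement is bounded via (\ref{ineq_crossing_loop}) and Lemma \ref{lemma_twocluster}.

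\textbf{Step 2 (one scale).} On the good event of Step 1, fix the single excursion $\eta$ crossing scale $2i+1$. Exactly as in the proof of Lemma \ref{lemma_cluster_dense}, Lemma \ref{lemma_cap_lower} shows that with probability at least $c_\ddagger>0$, $\eta$ hits some $z$ near $\partial B(r_{2i+1})$ whose cluster $\mathcal C_z^{\nu,\partial B(z,r_{2i})}$ has capacity $\ge c\,r_{2i}^{d-2}$. This uses $\nu\le N^{-2}\le r_{2i}^{-2}$ for $i\le i_\star$, which is why we stop at $i_\star$.

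Pivotality must then be checked: removing this local cluster disconnects $B(n)$ from $\partial B(M)$. A sufficient condition is that $\eta$ is the unique connection through the annulus and that the local massive cluster carries the only link between the two halves of $\eta$ there. I would enforce this by a local surgery (finite-energy) argument. Resample the loops of $\mathcal{L}$ hitting $B(z,r_{2i})$ so that (a) the massive cluster is present, and (b) the path $\eta$ is ``cut'' inside $B(z,r_{2i})$, i.e.\ the connection passes only through that cluster, while no other loop cluster crosses $B(r_{2i+2})\setminus B(r_{2i})$. The cost of (b) is a constant factor. This is controlled using Lemma \ref{lemma_twocluster}(1) to rule out a second crossing cluster with probability bounded away from 1, together with (\ref{ineq_crossing_decay}) to show that the conditioning on the outer connection changes probabilities only by constants.

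\textbf{Step 3 (iteration).} By the spatial Markov property of the excursions and loop soup, together with quasi-multiplicativity (\ref{ineq_quasi}), the indicators $\mathbbm{1}_{\mathsf V^i_a}$ stochastically dominate i.i.d.\ Bernoulli$(c)$ variables across well-separated scales. Note that even and odd indices $r_{2i}$ vs.\ $r_{2i+1}$ give the needed separation. Chernoff's inequality then gives a bound $e^{-c' i_\star}=(n/N)^{c_{\ref*{const_pivotal_3}}}$, with $c_{\ref*{const_pivotal_2}}$ chosen small.

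The main obstacle is Step 2(b): obtaining uniformly positive conditional probability for pivotality of the local massive cluster, given everything outside. Pivotality is a global, non-monotone property. I would attack it first through the two-arm estimate (\ref{two_arm1}) combined with a finite-energy modification of loops inside $B(z,r_{2i})$.
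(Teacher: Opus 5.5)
There is a genuine gap, and it sits exactly at the step you flag as the main obstacle. Steps 1--2 analyse $\mathsf{V}^i_a$ inside the picture given by Lemma \ref{lemma_switching}. That lemma only identifies the law of the \emph{occupation field} under $\{y\xleftrightarrow{} z\}$.

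\textbf{The pivotality event is not occupation-field measurable.} $\mathsf{V}^i_a$ depends on which loops are present and which of them survive the killing. It also depends on whether the remaining loops still connect $B(n)$ to $\partial B(M)$ once $\mathcal{C}_z^{\nu,\partial B(z,r_{2i})}$ is deleted. In the switching representation the connection is carried by the excursion $\eta$ of component (4), which is not a union of loops of $\mathcal{L}$. Deleting clusters of the loop-soup component therefore never disconnects $y$ from $z$, and $\eta$ has no loop structure that a local resampling could ``cut''. So the surgery in Step 2(b) cannot even be formulated in that picture. The analogy with Lemma \ref{lemma_cluster_dense} worked only because density is a property of the cluster as a set, which is occupation-field measurable.

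\textbf{Step 1 has a related defect.} (\ref{ineq_bound_5.2}) is a first-moment bound on connection probabilities. It gives no domination of conditional laws for a non-monotone event such as $\{\mathbf{V}\le c\,i_\star\}$. Worse, since $y\notin B(n)$ and $z\in B(M)$, the event $\{y\xleftrightarrow{} z\}$ does not imply $\{B(n)\xleftrightarrow{}\partial B(M)\}$. On the complement of the latter nothing is pivotal, so $\mathbf{V}=0$. Your reduced estimate would therefore force $\mathbb{P}(B(n)\xleftrightarrow{}\partial B(M)\mid y\xleftrightarrow{} z)\ge 1-O((n/N)^{c})$. This is false: with conditional probability bounded below, the cluster of $y$ misses $B(n)$ or stops short of $\partial B(M)$. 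Likewise, (\ref{ineq_crossing_decay}) and (\ref{ineq_quasi}) compare connection probabilities. They do not show that conditioning on the outer connection costs only constants for a local configuration, nor do they produce the i.i.d.\ domination claimed in Step 3.

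\textbf{How the paper proceeds.} It works directly with the loop soup. It explores the partial cluster $\mathcal{C}_i$ of $B(n)$, built only from intersections inside $B(r_{2i})$, and jumps to the next index $i_+$ at which this partial cluster is localized. The single-loop crossing exponent $d-2$ exceeds the cluster crossing exponent $\frac d2-1$, so the jumps $i_+-i$ have exponential tails. This gives $\gtrsim\log(N/n)$ steps with high probability, and it is the decoupling mechanism your Step 3 lacks.

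At each step, the idea you are missing has three parts. Fix $u\in\hat{\partial}\hat{B}(r_{2i+1})$.
(i) Using the two-arm/separation arguments of \cite[Section 6.2]{cai2025heterochromatic}, show that $\mathbb{P}(\mathcal{C}_i\xleftrightarrow{}\partial B(M))$ is comparable to the probability of two \emph{disjoint} clusters $\mathcal{C}^{\mathrm{in}},\mathcal{C}^{\mathrm{out}}$. They join $B(u,\frac14 r_{2i})$ to $\mathcal{C}_i$ and to $\partial B(M)$ respectively, each has capacity $\ge c\,r_{2i}^{d-2}$ in $B(u,\frac12 r_{2i})$, and both avoid a hole $B(u,c'r_{2i})$.
(ii) By the restriction property, independently obtain a massive cluster of capacity $\ge c\,r_{2i}^{d-2}$ inside the hole, via Lemma \ref{lemma_cap_lower}.
(iii) Insert one massive loop in $B(u,r_{2i})$ hitting all three sets. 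Thanks to the capacity lower bounds, this costs only a constant factor.

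That loop is then the only link between $\mathcal{C}^{\mathrm{in}}$ and $\mathcal{C}^{\mathrm{out}}$. This makes a large-capacity $\mathcal{C}_z^{\nu,\partial B(z,r_{2i})}$ pivotal with uniformly positive conditional probability. Your suggestion of ``two-arm estimate plus finite energy'' points in this direction. Without the disjoint-arms-with-a-hole comparison in (i), however, the insertion does not produce pivotality.
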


Notably, one of the main results of \cite{cai2025gap} is that removing small loops changes the one-arm exponent of the loop soup $\mathcal{L}$. A key ingredient in its proof is an analogue of Lemma \ref{newlemma_pivotal}, with the event $\mathsf{V}^i_c$ replaced by the existence of a small pivotal loop in the annulus $B(r_{2i+2})\setminus B(r_{2i})$. The proof of this result in \cite[Section 3]{cai2025gap} consists of two ingredients. The first ingredient is an exploration process. Briefly, when the exploration reaches scale $r_{2i}$, we let $\mathcal{C}_i$ be the partial loop cluster containing $B(n)$, where connections are
formed only through intersections within $B(r_{2i})$. We then move the exploration to scale $r_{2i_+}$, where $i_+:=\min\{j\ge i+1:\mathcal{C}_j\cap \partial B(r_{2j-1})=\emptyset \}$. The process stops once $i_+\ge i_\star$ (recall that $i_\star:=\min\{i\ge 1: r_{2i+3}\ge N \}$). Since the exponent of the crossing probability for loop clusters (i.e., $\frac{d}{2}-1$; see (\ref{crossing_prob})) is smaller than that for single loops (i.e., $d-2$; see (\ref{ineq_crossing_loop})), the increment $i_+-i$ admits an exponential tail. Consequently, the exploration process typically consists of at least $c\log(N/n)$ steps (see \cite[Lemma 3.1]{cai2025gap}). The second ingredient is to show that at each step of the exploration process (say, when it reaches scale $r_{2i}$), with a uniformly positive probability there exists a pivotal edge in $B(r_{2i+2} ) \setminus  B(r_{2i})$. The corresponding statement for Lemma \ref{newlemma_pivotal} is that for some small constant $c>0$, with a uniformly positive probability there exists a pivotal cluster $\mathcal{C}_z^{\nu,\partial B(z,r_{2i})}$ with $z \in  \hat{B}(2r_{2i+1})\setminus \hat{B}( \frac{1}{2}r_{2i+1})$ and $\mathrm{cap}(\mathcal{C}_z^{\nu,\partial B(z,r_{2i})})\ge c r_{2i}^{d-2}$. This property can be established through the following steps: 
\begin{enumerate}[(i)]

	\item  Fix an arbitrary point $y\in \hat{\partial}\hat{B}(r_{2i+1})$. We then show that the probability of $\{\mathcal{C}_i\xleftrightarrow{} \partial B(M)\}$ is of the same order as the probability of having two disjoint clusters $\mathcal{C}^{\mathrm{in}},\mathcal{C}^{\mathrm{out}}$ such that $\mathcal{C}^{\mathrm{in}}$ connects $B(y,\frac{1}{4} r_{2i})$ to $\mathcal{C}_i$, $\mathcal{C}^{\mathrm{out}}$ connects $B(y, \frac{1}{4} r_{2i})$ to $\partial B(M)$, and both $\mathcal{C}^{\mathrm{in}}$ and $\mathcal{C}^{\mathrm{out}}$ have capacity at least $cr_{2i}^{d-2}$ within $B(y,\frac{1}{2} r_{2i})$ and are disjoint from $B(y,c'r_{2i})$ (we denote this event by $\mathsf{A}_1$). This bound can be obtained via the argument in the proof of \cite[Section 6.2]{cai2025heterochromatic}, based on the estimates for two-arm probabilities.

		\item   According to Lemma \ref{lemma_cap_lower}, with a uniformly positive probability there exists $z \in \hat{B}(y,c'r_{2i})$ such that $\mathrm{cap}(\mathcal{C}_z^{\nu,\partial B(y,c'r_{2i} )})\ge cr_{2i}^{2-d}$ (we denote this event by $\mathsf{A}_2$). Note that such $z$ is contained in $\hat{B}(2r_{2i+1})\setminus \hat{B}( \frac{1}{2}r_{2i+1})$. By the restriction property, the events $\mathsf{A}_1$ and $\mathsf{A}_2$ are independent.

	   \item   On the event $\mathsf{A}_1\cap \mathsf{A_2}$ (whose probability, by the analysis above, is of the same order as that of $\{\mathcal{C}_i\xleftrightarrow{} \partial B(M)\}$), if one adds a loop $\ell$ within $B(y,r_{2i})$ intersecting $\mathcal{C}^{\mathrm{in}}$, $\mathcal{C}^{\mathrm{out}}$ and $\mathcal{C}_z^{\nu,\partial B(y,c'r_{2i})}$, then the cluster $\mathcal{C}_z^{\nu,\partial B(z,r_{2i})}$ is pivotal for $\{\mathcal{C}_i\xleftrightarrow{} \partial B(M)\}$ (since it contains $\ell$) and has capacity at least $cr_{2i}^{2-d}$ (since it contains $\mathcal{C}_z^{\nu,\partial B(y,c'r_{2i} )}$). Since the capacities of $\mathcal{C}^{\mathrm{in}}$, $\mathcal{C}^{\mathrm{out}}$ and $\mathcal{C}_z^{\nu,\partial B(y,c'r_{2i} )}$ are at least $cr_{2i}^{2-d}$, adding such a loop changes the probability by only a constant factor. To sum up, these estimates together imply the second ingredient for Lemma \ref{newlemma_pivotal}, thereby completing the proof.

\end{enumerate}  
As explained above, the proof of Lemma \ref{newlemma_pivotal} is essentially an adaptation of the arguments in \cite{cai2025gap}, and we therefore omit it. A detailed proof is provided in \cite{technicalpaper}.

 Next, we record a corollary of Lemma \ref{newlemma_pivotal} as follows. Recall the definition of a $\lambda$-dense set in (\ref{new3.5}); here we take $x=\bm{0}$.



  \begin{corollary}\label{coro_density}
  Recall the constant $\cref{const_pivotal_3}$ from Lemma \ref{newlemma_pivotal}. There exists $\cl\label{const_coro_density}>0$ such that for any point process $\mathcal{L}'$ satisfying $\mathcal{L}^{\nu}\le \mathcal{L}'\le \mathcal{L}$,  
  	\begin{equation}\label{coro_ineq_3.16}
  		\mathbb{P}\big( \exists \mathcal{C}' \in \mathfrak{C}'[M,n]\ \text{such that}\ \mathcal{C}' \ \text{is not}\ \cref{const_coro_density}\text{-dense}  \big)\lesssim \frac{n^{\frac{d}{2}-1+\cref{const_pivotal_3}}}{M^{\frac{d}{2}-1}N^{\cref{const_pivotal_3}} }, 
  	\end{equation}
  	where $\mathfrak{C}'[M,n]$ denotes the collection of clusters in $\mathcal{L}'$ crossing $B(M)\setminus B(n)$. 
  	  \end{corollary}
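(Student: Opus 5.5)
Any $\mathcal{C}'\in\mathfrak{C}'[M,n]$ crosses $B(M)\setminus B(n)$ in $\mathcal{L}'\le\mathcal{L}$, so it lies inside a cluster of $\mathcal{L}$ crossing the same annulus. Hence the event in (\ref{coro_ineq_3.16}) forces $\{B(n)\xleftrightarrow{}\partial B(M)\}$. By (\ref{crossing_prob}) this event has probability $\asymp (n/M)^{\frac{d}{2}-1}$. Multiplying by the conditional bound $(n/N)^{\cref{const_pivotal_3}}$ from Lemma \ref{newlemma_pivotal} gives exactly the right-hand side of (\ref{coro_ineq_3.16}). So it suffices to show the following: on $\{B(n)\xleftrightarrow{}\partial B(M)\}\cap\{\mathbf{V}_{\cref{const_pivotal_1}}>\cref{const_pivotal_2}i_\star\}$, every $\mathcal{C}'\in\mathfrak{C}'[M,n]$ is $\cref{const_coro_density}$-dense.

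Fix such a $\mathcal{C}'$ and a scale $i$ where $\mathsf{V}^i_{\cref{const_pivotal_1}}$ occurs, with the pivotal massive cluster $\mathcal{K}_i:=\mathcal{C}_z^{\nu,\partial B(z,r_{2i})}$ located at $z\in\hat B(2r_{2i+1})\setminus\hat B(\frac12 r_{2i+1})$. I claim $\mathcal{K}_i\subset\mathcal{C}'$.

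\begin{itemize}
\item Since $\mathcal{K}_i$ is pivotal for $\{B(n)\xleftrightarrow{}\partial B(M)\}$, every crossing path made of $\mathcal{L}$-loops must pass through $\mathcal{K}_i$. Otherwise removing $\mathcal{K}_i$ would not destroy the crossing.
\item $\mathcal{C}'$ is a union of $\mathcal{L}$-loops that crosses, so it contains a crossing path and therefore meets $\mathcal{K}_i$.
\item $\mathcal{K}_i$ is connected and consists of loops of $\mathcal{L}^\nu\le\mathcal{L}'$. Because $\mathcal{C}'$ is a cluster of $\mathcal{L}'$, it must contain all of $\mathcal{K}_i$.
\end{itemize}

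The subtle point is that pivotality is phrased for the whole cluster $\mathcal{K}_i$. One must check that "removing $\mathcal{K}_i$ kills every crossing" is the right reading. If not, I would use the weaker fact that the unique $\mathcal{L}$-crossing cluster contains $\mathcal{K}_i$, and argue from the construction in steps (i)–(iii) that any crossing sub-cluster of $\mathcal{L}'$ must pass through $\mathcal{K}_i$.

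Density then follows by the argument at the end of Lemma \ref{lemma_cluster_dense}. Each $\mathcal{K}_i$ lies in $B(z,r_{2i})\subset B(3r_{2i+1})\setminus B(\frac14 r_{2i+1})$, since $C_\dagger$ is large, and has capacity at least $\cref{const_pivotal_1}r_{2i}^{d-2}\asymp r_{2i+1}^{d-2}$ with constants depending on $C_\dagger$. By a standard capacity/hitting estimate via (\ref{ineq_2.15}) and the last-exit decomposition, a walk started inside $B(\frac14 r_{2i+1})$ that reaches $\partial B(4r_{2i+1})$ hits $\mathcal{K}_i$ with probability at least some $c>0$. Apply the strong Markov property successively at the hitting times of $\partial B(\frac14 r_{2i+1})$ over the at least $\cref{const_pivotal_2}i_\star$ good scales; these annuli are disjoint because the scales are spaced by $C_\dagger^2$. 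The avoidance probability is then at most $(1-c)^{\cref{const_pivotal_2}i_\star}\le (n/N)^{\lambda}$ for some $\lambda>0$, using $i_\star\asymp\log(N/n)$.

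This controls the path up to scale $N$. For the remaining range up to $M$, note that (\ref{new3.5}) with $M$ in place of $N$ only requires avoiding $\mathcal{C}'$ within $B(\frac{M}{2d})\setminus B(2dn)$, and hitting $\partial B(M)$ first requires crossing up to $N$ anyway. Since $(n/N)^{\lambda}$ need not be at most $(n/M)^{\lambda}$, I would take $\cref{const_coro_density}$ small enough and read the density requirement at the relevant scale $N$. That is the scale at which the corollary is applied in Lemma \ref{lemma3.10_two_arm}, with the outer radius effectively $N$, consistent with the P.S.\ remark. The main obstacle is the containment step $\mathcal{K}_i\subset\mathcal{C}'$ for an arbitrary intermediate process $\mathcal{L}'$, together with the uniformity over all crossing $\mathcal{C}'$ simultaneously. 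The latter comes for free, because the good event is defined once in terms of $\mathcal{L}$ and $\mathcal{L}^\nu$ only.
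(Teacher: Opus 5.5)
Your proposal is correct and follows the paper's argument. You reduce, via Lemma \ref{newlemma_pivotal} and (\ref{crossing_prob}), to the inclusion $\{\mathbf{V}>c\, i_\star\}\subset\{\text{every crossing } \mathcal{C}' \text{ is dense}\}$; you use pivotality together with $\mathcal{L}^{\nu}\le\mathcal{L}'$ to show that the high-capacity pivotal massive cluster lies in every $\mathcal{C}'\in\mathfrak{C}'[M,n]$; and you conclude with a uniformly positive hitting probability at each of the $\gtrsim\log(N/n)$ good scales plus the strong Markov property. Your reading of density at scale $N$ rather than $M$ is the one the paper uses implicitly (its proof concludes from $\mathbf{V}>c\,i_\star\asymp\log(N/n)$, and Lemma \ref{lemma3.10_two_arm} applies the corollary in that form), and your worry about how to read pivotality is unnecessary because the crossing event is increasing.
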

  \begin{proof}
   Referring to Lemma \ref{newlemma_pivotal}, it suffices to show that for some constant $c_\dagger>0$, 
   \begin{equation}\label{inclusion350}
   	\{ \mathbf{V}_{\cref{const_pivotal_1}} > \cref{const_pivotal_2} i_\star \}\subset \big\{ \text{every}\  \mathcal{C}' \in \mathfrak{C}'[M,n] \ \text{is}\ c_\dagger\text{-dense} \big\}. 
   \end{equation}
 Recall that when $\mathsf{V}_{\cref{const_pivotal_1}}^i$ occurs, there exists
$z \in  \hat{B}(2r_{2i+1})\setminus \hat{B}(\frac{1}{2}r_{2i+1})$ such that the cluster $\mathcal{C}_z^{\nu,\partial B(z,r_{2i})}$ satisfies $\mathrm{cap}(\mathcal{C}_z^{\nu,\partial B(z,r_{2i})})\ge \cref{const_pivotal_1} r_{2i}^{d-2}$ and is pivotal for the event $\{B(n)\xleftrightarrow{}\partial B(M)\}$. In fact, $\mathcal{C}_z^{\nu,\partial B(z,r_{2i})}$ must be contained in every $\mathcal{C}'\in \mathfrak{C}'[M,n]$; otherwise, such a cluster $\mathcal{C}'$ would still certify the event $\{B(n)\xleftrightarrow{}\partial B(M)\}$ after the removal of $\mathcal{C}_z^{\nu,\partial B(z,r_{2i})}$, which is contradictory to the pivotality of $\mathcal{C}_z^{\nu,\partial B(z,r_{2i})}$. Therefore, for any $\mathcal{C}'\in \mathfrak{C}'[M,n]$, if $\mathsf{V}_{\cref{const_pivotal_1}}^i$ occurs, then whenever a random walk starting from some $y\in \hat{\partial}\hat{B}(dn)$ crosses the annulus $B(r_{2i+2})\setminus B(r_{2i})$, it intersects $\mathcal{C}'$ with a uniformly positive probability. As a result, $\mathbf{V}_{\cref{const_pivotal_1}} > \cref{const_pivotal_2} i_\star \asymp \log(N/n)$ implies that $\mathcal{C}'$ is $c_\dagger$-dense for some $c_\dagger>0$. This proves (\ref{inclusion350}) and hence completes the proof of the corollary. 
  \end{proof}

 With Corollary \ref{coro_density} at hand, we are now ready to establish Lemma \ref{lemma3.10_two_arm}.

         \begin{proof}[Proof of Lemma \ref{lemma3.10_two_arm}]
        We denote by $\mathsf{A}$ the event in (\ref{coro_ineq_3.16}) with $\mathcal{L}'=\overline{\mathcal{L}}_{\alpha}$. Therefore, $\mathsf{H}_{\alpha}\cap \mathsf{A}$ implies that $\mathsf{A}$ and $B(n)\xleftrightarrow{} \partial B(M)$ occur disjointly. Thus, by the BKR inequality, (\ref{crossing_prob}) and (\ref{coro_ineq_3.16}), we have 
        	\begin{equation}\label{ineq337}
        		\mathbb{P}\big(\mathsf{H}_{\alpha}\cap \mathsf{A} \big)\lesssim   \mathbb{P}\big(\mathsf{A}  \big)  \cdot    \mathbb{P}\big(B(n)\xleftrightarrow{} \partial B(M)  \big)  \lesssim  \frac{n^{d-2+\cref{const_pivotal_3}}}{M^{d-2}N^{\cref{const_pivotal_3}}}.         	\end{equation}
         Next, we estimate the probability of $\mathsf{H}_{\alpha}\cap \mathsf{A}^c$. We enumerate the points in $\hat{\partial } \hat{B}(n)$ as $\{y_i\}_{1\le i\le l}$. We define $\mathsf{D}_i$ as the event that the cluster in $\overline{\mathcal{L}}_{\alpha}$ containing $y_i$ (denoted by $\mathcal{C}_i$) is the unique cluster in $\{\mathcal{C}_j\}_{1\le j\le i}$ that reaches $\partial B(M)$. By the restriction property, the probability $\mathbb{P}( \mathsf{H}_{\alpha}\cap \mathsf{A}^c )$ is at most  
        	 \begin{equation}\label{ineq_AcH}
        	 	\begin{split}
        	 	& 	 \sum\nolimits_{1\le i\le l} \mathbb{E}\big[ \mathbbm{1}_{\mathsf{D}_i\cap \{ \mathcal{C}_i\ \text{is}\ \cref{const_coro_density}\text{-dense} \}} \cdot \mathbb{P}\big( B(n) \xleftrightarrow{ (\cup_{1\le j\le i} \mathcal{C}_j )}  \partial B(M)  \big) \big]\\ 
        	 		 \overset{ (\ref{bound_crossing_by_hitting})}{ \lesssim}  &\sum\nolimits_{1\le i\le l}  \mathbb{E}\Big[ \mathbbm{1}_{\mathsf{D}_i\cap \{ \mathcal{C}_i\ \text{is}\ \cref{const_coro_density}\text{-dense} \}} \cdot \frac{n^{\frac{d}{2}-1+\cref{const_coro_density}}}{M^{\frac{d}{2}-1}N^{\cref{const_coro_density}}} \Big] \\
        \le & \frac{n^{\frac{d}{2}-1+\cref{const_coro_density}}}{M^{\frac{d}{2}-1}N^{\cref{const_coro_density}}}\cdot \mathbb{P}\big( B(n)\xleftrightarrow{} \partial B(M)  \big) 	 	\overset{ (\ref{crossing_prob})}{ \lesssim}   \frac{n^{d-2+\cref{const_coro_density}}}{M^{d-2}N^{\cref{const_coro_density}}}. 
        	 	\end{split}
        	 \end{equation} 
      Combining (\ref{ineq337}) and (\ref{ineq_AcH}), we complete the proof.         \end{proof}

 \subsection{Anti-concentration of loop clusters}

  In the subsequent proofs, we need the following property: with high probability, there is no loop cluster (after rescaling) within $B(2)$ whose diameter is close to $\epsilon$. Therefore, a local modification of a cluster inside a small box will not change whether it falls within the range under consideration. We now show that this property holds for almost all $\epsilon>0$:

%

   \begin{lemma}\label{lemma_continuous_diameter}
 	For any $3\le d\le 5$, $0<a<b<1$, Lebesgue-a.e. $\epsilon>0$, the clusters $\mathfrak{C}$ of the loop soup on $\mathbb{Z}^d$ satisfies that for all sufficiently large $k\in \mathbb{N}^+$, 
 	\begin{equation}\label{345}
 	\mathbb{P}\big( \exists \mathcal{C}\in \mathfrak{C}\ \text{such that}\  \delta\cdot    \mathcal{C}\subset B(2)\ \text{and} \   \delta\cdot  | \mathcal{C}|\in [\epsilon   -\delta^{b} , \epsilon +\delta^{b}]   \big) \le  \delta^{a}, 
 	\end{equation}
 	where $\delta:=2^{-k}$, and $|\mathcal{C}|$ denotes the Euclidean diameter of the cluster $\mathcal{C}$.
 \end{lemma}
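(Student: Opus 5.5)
We will deduce Lemma \ref{lemma_continuous_diameter} from a counting argument, using only that the expected number of macroscopic clusters is bounded. The idea is that, for fixed $k$, the windows $[\epsilon-\delta^b,\epsilon+\delta^b]$ for different $\epsilon$ cannot all be heavily charged: integrating over $\epsilon$ gives a small total. Borel--Cantelli over the dyadic scales $k$ then selects a full-measure set of good $\epsilon$.

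Fix $\epsilon_0>0$, and restrict to $\epsilon\in[\epsilon_0,1]$; larger $\epsilon$ can be handled the same way up to the constraint $\delta\cdot\mathcal{C}\subset B(2)$. For $\delta=2^{-k}$, let $N_k$ be the number of clusters $\mathcal{C}\in\mathfrak{C}$ with $\delta\cdot\mathcal{C}\subset B(2)$ and $\delta|\mathcal{C}|\ge \epsilon_0/2$. Define
\[
f_k(\epsilon):=\mathbb{P}\big(\exists \mathcal{C}\in\mathfrak{C}:\ \delta\cdot\mathcal{C}\subset B(2),\ \delta|\mathcal{C}|\in[\epsilon-\delta^b,\epsilon+\delta^b]\big).
\]

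The first step is a uniform bound
\[
\mathbb{E}[N_k]\le C(\epsilon_0).
\]
To prove it, cover $B(2/\delta)$ by $O(\epsilon_0^{-d})$ balls $B(x_j,\epsilon_0/(100\delta))$. Every counted cluster crosses some annulus $B(x_j,\epsilon_0/(8\delta))\setminus B(x_j,\epsilon_0/(100\delta))$. It then suffices to show that the number of disjoint clusters crossing a fixed annulus of bounded aspect ratio has bounded expectation. For this we iterate the exploration argument of Lemma \ref{lemma_threecluster}: by (\ref{newineq3.3}) and the restriction property, the probability of $j$ disjoint crossing clusters decays geometrically in $j$ once $j\ge 3$. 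If that iteration is inconvenient, we will use the cruder route of bounding the count by the number of crossing clusters and invoking (\ref{crossing_prob}) and Lemma \ref{lemma_threecluster} together with a BKR-type iteration. Either way, the output is $\mathbb{E}[\#\text{crossing clusters}]\le C$ uniformly in the scale.

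The second step is the Fubini computation:
\[
\int_{\epsilon_0}^{1} f_k(\epsilon)\,\mathrm{d}\epsilon\le \mathbb{E}\Big[\sum_{\mathcal{C}}\int \mathbbm{1}_{|\epsilon-\delta|\mathcal{C}||\le\delta^b}\,\mathrm{d}\epsilon\Big]\le 2\delta^{b}\,\mathbb{E}[N_k]\le C\delta^{b}.
\]
By Markov's inequality, the set $E_k:=\{\epsilon\in[\epsilon_0,1]: f_k(\epsilon)>\delta^a\}$ has Lebesgue measure at most $C\delta^{b-a}=C2^{-k(b-a)}$. Since $b>a$, this is summable in $k$, so by Borel--Cantelli (for Lebesgue measure) a.e. $\epsilon\in[\epsilon_0,1]$ lies in only finitely many $E_k$. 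That is exactly (\ref{345}) for all large $k$. Letting $\epsilon_0\downarrow0$ along a countable sequence gives the statement for a.e. $\epsilon>0$; the constraint $\delta\mathcal{C}\subset B(2)$ makes $\epsilon>4$ trivial.

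The main obstacle is the first step, the uniform-in-$k$ bound on the expected number of macroscopic clusters in a fixed bounded region. The rest is soft. The three-arm bound of Lemma \ref{lemma_threecluster} alone controls only $\mathbb{P}(\ge3)$, not the expectation, so we will need the exploration iteration, in which each additional crossing cluster costs a uniformly bounded-below factor. To get this we condition on the explored clusters and apply (\ref{crossing_prob}) with the restriction property, which gives a factor strictly less than one at each step. If that geometric decay is hard to obtain, an alternative is to bound $\mathbb{E}[N_k]$ by the expected volume of $\mathcal{L}$-points in $B(2/\delta)$ connected to distance $\epsilon_0/(8\delta)$, divided by the typical minimal volume of a macroscopic cluster. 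That alternative is less clean, so we would try the exploration first.
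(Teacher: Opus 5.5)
Your proposal is correct and follows the paper's proof essentially step for step: Tonelli over $\epsilon$ gives $\int \mathbb{P}(\cdot)\,\mathrm{d}\epsilon \le 2\delta^{b}\,\mathbb{E}[\#\text{macroscopic clusters}]$, that expectation is bounded uniformly in $k$ by covering with $O(1)$ balls and using the BKR bound $p^{j}$ on the probability of $j$ disjoint crossing clusters (your ``cruder route'' via (\ref{crossing_prob}) and BKR is exactly what the paper does), and Markov's inequality plus Borel--Cantelli in $k$ finishes. The one adjustment needed is to take the annulus aspect ratio to be a large constant $C_\dagger$ rather than your fixed $12.5$: (\ref{crossing_prob}) only bounds the single-crossing probability up to an unspecified constant, so it guarantees $p\le \tfrac{1}{2}$ only once the ratio is large enough, and this costs just a bounded number of extra balls in the covering.
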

 

 \begin{proof}
  We denote by $\mathsf{A}_k^{\epsilon}$ the event in (\ref{345}). 
 	   We denote by $\mathfrak{C}_{t}$ the collection of clusters in $\mathfrak{C}$ that are contained in $B(2^{k+1})$ and have diameters at least $t 2^{k}$. For all sufficiently large $k$, since $\epsilon 2^k-2^{(1-b)k}  >\epsilon 2^{k-1}$, one has 
 	   \begin{equation}
 	   	\begin{split}
 	   	\int_{ M^{-1} \le \epsilon \le 4} \mathbbm{1}_{\mathsf{A}_k^{\epsilon}} \mathrm{d}\epsilon \le 2^{-bk+1} \#\mathfrak{C}_{(2M)^{-1}} , \ \forall M\ge 1. 
 	   	\end{split}
 	   \end{equation} 
 	 By taking the expectation on both sides and using Tonelli's theorem, we have 
 	  \begin{equation}\label{347}
  	\int_{M^{-1}\le \epsilon \le 4}  	\mathbb{P}\big( \mathsf{A}_k^{\epsilon} \big)  \mathrm{d}\epsilon \le 2^{-bk+1} \mathbb{E}\big[\#\mathfrak{C}_{(2M)^{-1}}   \big]. 
  	  \end{equation}
  	  By (\ref{crossing_prob}), there exists a sufficiently large constant $C_\dagger>0$ such that 
  	  \begin{equation}
  	  	\mathbb{P}\big(  B(x,n)\xleftrightarrow{} \partial B(x,C_\dagger n ) \big)\le \tfrac{1}{2}, \ \forall x\in \mathbb{Z}^d\ \text{and}\ n\ge 1.
  	  	  	  \end{equation}
 Let $R:=(4C_\dagger M)^{-1}2^{k}$ and define  
  	  \begin{equation}
  	  	\mathbf{Y}:=\{ y\in R \cdot \mathbb{Z}^d: B(y, R)\cap B(2^k)\neq \emptyset \}.
  	  \end{equation}
  Note that $\# \mathbf{Y}\asymp M^d$. For each $y\in \mathbf{Y}$, let $\mathcal{N}_y$ denote the number of clusters that intersect $B(y,R)$ and have diameter at least $M^{-1}2^{k-1}=2C_\dagger R$. In fact, by (\ref{crossing_prob}) and the BKR inequality, $\mathcal{N}_y$ admits an exponential tail: for any $j\in \mathbb{N}^+$, 
  \begin{equation}
  	\mathbb{P}\big(  \mathcal{N}_y\ge j\big)\le \big[ \mathbb{P}\big( B(y,R)\xleftrightarrow{} \partial B(y, C_\dagger R  ) \big) \big]^j\le  2^{-j}.
  \end{equation}
  As a result, we have $\mathbb{E}[\mathcal{N}_y]\lesssim 1$ and thus, 
  \begin{equation}\label{newineq359}
  	\mathbb{E}\big[\#\mathfrak{C}_{(2M)^{-1}}   \big]  \le \sum\nolimits_{y\in \mathbf{Y}} \mathbb{E}\big[\mathcal{N}_y \big] \lesssim  M^d. 
  \end{equation}
   Let $\mathfrak{U}_{M,k}$ denote the collection of $\epsilon\in [M^{-1},4]$ such that $\mathbb{P} ( \mathsf{A}_k^{\epsilon})\ge 2^{-ak}$. Hence, 
  	  \begin{equation}\label{348}
  	  		\int_{M^{-1}\le \epsilon \le 4}  	\mathbb{P}\big( \mathsf{A}_k^{\epsilon} \big)  \mathrm{d}\epsilon \ge  2^{-ak} \cdot \mathrm{m}(\mathfrak{U}_{M,k}),
  	  \end{equation}  
  	  where $\mathrm{m}(\cdot)$ is the Lebesgue measure. Combining (\ref{347}), (\ref{newineq359}) and (\ref{348}), one has 
  	  \begin{equation}
  	\sum\nolimits_{k\ge 1} 	\mathrm{m}(\mathfrak{U}_{M,k}) \lesssim 	\sum\nolimits_{k\ge 1}  2^{(a-b)k}M^{d} <\infty.
  	  \end{equation}
  	  Thus, by the Borel-Cantelli lemma, there exists a set $\mathcal{E}_M$ of Lebesgue measure zero such that for any $\epsilon\in [M^{-1},4]\setminus \mathcal{E}_M$, $\mathbb{P} ( \mathsf{A}_k^{\epsilon} )\le  2^{-ak}$ holds for all sufficiently large $k$. Since $M$ is arbitrary, we obtain the bound (\ref{345}).  
 \end{proof}

 The following estimate is useful for controlling the error caused by graph modifications near the boundary of $\mathbb{B}$.

 

  \begin{lemma}\label{lemma_cluster_closetoboundary}
 	For any $3\le d\le 5$, $0<a<b<1$, $\epsilon>0$, the clusters $\mathfrak{C}$ of the loop soup on $\mathbb{Z}^d$ satisfies that for all sufficiently small $\delta>0$, 
 	 \begin{equation}\label{350}
 	 	\mathbb{P}\big( \exists \mathcal{C}\in \mathfrak{C}\ \text{such that}\   \delta\cdot \mathcal{C}\subset B(1+ \delta^b),\ \delta\cdot \mathcal{C}\not\subset B(1-\delta^b)\ \text{and} \   | \mathcal{C}|\ge \epsilon \delta^{-1}  \big) \le  \delta^{a}. 
 	 \end{equation}
 \end{lemma}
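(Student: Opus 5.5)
Write $N:=\delta^{-1}$. On the event in (\ref{350}) there is a cluster $\mathcal{C}\in\mathfrak{C}$ with $\mathcal{C}\subset B((1+\delta^b)N)$, $|\mathcal{C}|\ge \epsilon N$, and some point $v\in\mathcal{C}$ lying in the thin shell $S:=B((1+\delta^b)N)\setminus B((1-\delta^b)N)$. The plan is a covering argument combined with a ``half-space'' two-arm type bound. Cover $S$ by boxes $B(y,r)$ with $r:=\delta^{b'}N$ for some $b'\in(a,b)$ close to $b$ to be chosen, and $y$ ranging over $r\mathbb{Z}^d$ near $S$; the number of such boxes is $\asymp (N/r)^{d-1}=\delta^{-(d-1)b'}$. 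Because $\delta^bN\ll r$, the shell near $y$ lies within distance $r$ of the sphere $\partial B(N)$. For a fixed box, the event forces a cluster that intersects $B(y,r)$, reaches $\partial B(y,\epsilon N/2)$ (since $|\mathcal{C}|\ge\epsilon N$ and $\mathcal{C}$ meets $B(y,r)$, by the triangle inequality it must exit $B(y,\epsilon N/3)$ once $\delta$ is small), and never leaves $B((1+\delta^b)N)$, i.e.\ it avoids the exterior region, which near $y$ looks like a half-space at distance $\lesssim r$ from $y$.

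The key estimate is therefore a one-arm bound in a half-space: for $y$ within distance $2r$ of $\partial B(N)$, with $D:=[B((1+\delta^b)N)]^c$,
\[
\mathbb{P}\big(B(y,r)\xleftrightarrow{(D)}\partial B(y,\epsilon N/2)\big)\lesssim (r/\epsilon N)^{\frac{d}{2}-1+\lambda}
\]
for some $\lambda>0$ (I expect in fact $\lambda$ of order one, e.g.\ exponent $d/2$ in the half-space geometry, but any $\lambda$ with $\frac d2 -1+\lambda>d-1$ is not available in general, so a better route is needed; see below). To obtain such a bound, I would use (\ref{bound_crossing_by_hitting}): the crossing probability avoiding $D$ is controlled by $(r/N)^{\frac d2-1}$ times the probability that a walk started near $\partial B(y,dr)$ reaches $\partial B(y,\epsilon N/2)$ before hitting $D$, and by the standard gambler's-ruin estimate for walks near a (nearly flat) boundary that probability is $\lesssim r/(\epsilon N)$. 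This gives exponent $\frac d2$, which multiplied by the box count $\delta^{-(d-1)b'}$ gives $\delta^{(1-b')\frac d2-(d-1)b'}$ times $\epsilon$-dependent constants, insufficient by itself when $b'$ is not small.

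Hence the main obstacle is the counting: I would instead not cover the whole shell but use the first-moment count of macroscopic clusters from the proof of Lemma \ref{lemma_continuous_diameter}. Concretely, the expected number of clusters in $B(2N)$ of diameter $\ge\epsilon N$ is $O_\epsilon(1)$ by (\ref{newineq359}). Condition on (or explore) such a cluster; the event says its outermost radius $\max_{v\in\mathcal{C}}|v|$ falls in $[(1-\delta^b)N,(1+\delta^b)N]$. So it suffices to show that the law of this maximal radius has no mass $\gg\delta^{a}$ in windows of width $\delta^bN$, uniformly. For this I would combine the half-space bound above at the scale of the window: the cluster realizing its maximum radius in the window must have a point $v$ in $S$ from which it has a ``one-sided'' arm of length $\epsilon N/2$ avoiding $D$; summing over boxes of size $r=\delta^bN$ at points of $S$, each contributes $(\delta^b)^{\frac d2}$ times the density-of-arrivals factor, and I would sharpen by using (\ref{ineq_quasi})-type quasi-multiplicativity together with the fact that the cluster must also avoid the complementary ball locally (a two-sided constraint giving an extra factor via Lemma \ref{lemma_twocluster}-style arguments). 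If this direct route falls short for large $b$, the fallback, mirroring Lemma \ref{lemma_continuous_diameter}, is to integrate over the radius $1$ replaced by $\rho\in[1/2,2]$ and use scale invariance of the loop soup on $\widetilde{\mathbb Z}^d$ only approximately, which is why I regard the uniform half-space estimate as the step requiring the most care.
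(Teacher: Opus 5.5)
Your proposal has a genuine gap: none of the routes you sketch actually closes.

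\textbf{The box-covering route.} The lemma asks for a bound essentially linear in the relative shell width $\delta^b$. With boxes of relative size $r/N=\delta^{b'}$ there are $\asymp\delta^{-(d-1)b'}$ of them, so a box-by-box union bound needs a per-box estimate of order $(r/N)^{d}$, i.e.\ a boundary arm exponent $d$. The estimate you extract from (\ref{bound_crossing_by_hitting}) plus gambler's ruin has exponent only $d/2$. The resulting total is $\delta^{b'd/2-(d-1)b'}=\delta^{-b'(d/2-1)}$, which diverges for \emph{every} $b'>0$ when $d\ge 3$. Your exponent $(1-b')\frac d2-(d-1)b'$ is a miscomputation; the failure is not confined to large $b'$.

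\textbf{The ``better route''.} It restates the lemma (the maximal radius of a macroscopic cluster puts no mass $\gg\delta^a$ in thin windows). It then returns to the same half-space bound with an unspecified ``two-sided'' sharpening, and nothing there supplies the missing factor $(r/N)^{d/2}$.

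\textbf{The fallback.} Averaging over the radius $\rho$ requires a scale invariance that the discrete soup does not have. Approximate invariance across scales is essentially what Proposition \ref{prop_1.2} establishes, and its proof uses this lemma, so invoking it would be circular. Without invariance, averaging over $\rho$ only yields a bound for Lebesgue-a.e.\ radius, as in Lemma \ref{lemma_continuous_diameter}, not for the fixed radius $1$.

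\textbf{The missing idea.} You should average over the \emph{center}, using the exact translation invariance of $\mathbb{Z}^d$. Let $\mathsf{A}_z$ be the event with all balls centered at $z$. Then $\mathbb{P}(\mathsf{A}_{\bm 0})=|\hat B(\delta^{-1})|^{-1}\,\mathbb{E}\big[\sum_{z\in\hat B(\delta^{-1})}\mathbbm{1}_{\mathsf{A}_z}\big]$. Swapping to a sum over clusters bounds this by $|\hat B(\delta^{-1})|^{-1}\,\mathbb{E}\big[\sum_{\mathcal{C}\in\mathfrak{C}_\epsilon}\#Q(\mathcal{C})\big]$, where $Q(\mathcal{C})$ is the set of centers $z$ with $\mathcal{C}\subset B(z,\delta^{-1}+\delta^{b-1})$ and $\mathcal{C}\not\subset B(z,\delta^{-1}-\delta^{b-1})$.

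The bound $\#Q(\mathcal{C})\lesssim\delta^{-d+b}$ is purely deterministic:
\begin{itemize}
\item The set $U(R)=\{x:\mathcal{C}\subset B(x,R)\}=\bigcap_{y\in\mathcal{C}}B(y,R)$ is convex.
\item Unit balls around points of $Q(\mathcal{C})$ lie in $U(\delta^{-1}+\delta^{b-1}+1)\setminus U(\delta^{-1}-\delta^{b-1}-1)$.
\item The volume of this difference is $\int\mathrm{vol}_{d-1}(\partial U(t))\,\mathrm{d}t\lesssim\delta^{b-1}\cdot\delta^{1-d}$, since the surface area of convex sets is monotone under inclusion.
\end{itemize}
Combine this with $\mathbb{E}[\#\mathfrak{C}_\epsilon]\lesssim\epsilon^{-d}$ from (\ref{newineq359}), which is the first-moment ingredient you correctly identified. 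This gives $\mathbb{P}(\mathsf{A}_{\bm 0})\lesssim\epsilon^{-d}\delta^{b}\le\delta^a$ for small $\delta$.

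No half-space arm estimate is needed at all. The required smoothness comes from randomizing the center of the ball, not from any fine property of the cluster law.
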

 \begin{proof}
  Let $\mathfrak{C}_{\epsilon}$ denote the collection of clusters in $\mathfrak{C}$ that are contained in $B(2\delta^{-1})$ and have diameters at least $\epsilon \delta^{-1}$. For each $z\in \mathbb{Z}^d$, we define the event 
  \begin{equation*}
  	 \mathsf{A}_z:= \big\{ \exists \mathcal{C}\in \mathfrak{C}\ \text{such that}\    \mathcal{C}\subset B(z,\delta^{-1}+\delta^{b-1}),\  \mathcal{C}\not\subset B(z,\delta^{-1}-\delta^{b-1})\ \text{and} \   | \mathcal{C}|\ge \epsilon \delta^{-1}  \big\}. 
  \end{equation*}
   Note that $\mathsf{A}_{\bm{0}}$ is the event appearing in (\ref{350}). By the translation invariance of $\mathbb{Z}^d$,   
   \begin{equation}\label{352}
   	 \mathbb{P}( \mathsf{A}_{\bm{0}}) = \big| \hat{B}(\delta^{-1}) \big|^{-1}  \mathbb{E}\Big[  \sum\nolimits_{z\in \hat{B}(\delta^{-1}) } \mathbbm{1}_{\mathsf{A}_z}\Big]. 
   \end{equation}
   Since a cluster $\mathcal{C}$ certifying the event $\mathsf{A}_z$ for some $z\in \hat{B}(\delta^{-1})$ must lie in $\mathfrak{C}_{\epsilon}$,   
 \begin{equation}\label{353}
  	 \sum\nolimits_{z\in \hat{B}(\delta^{-1}) } \mathbbm{1}_{\mathsf{A}_z} \le \sum\nolimits_{\mathcal{C}\in \mathfrak{C}_{\epsilon}}   Q(\mathcal{C}),  
  \end{equation}
  where $Q(\mathcal{C}):=\{ z\in \mathbb{Z}^d: \mathcal{C}\subset B(z,\delta^{-1}+\delta^{b-1}), \mathcal{C}\not\subset B(z,\delta^{-1}-\delta^{b-1}) \}$. We claim   
   \begin{equation}\label{354}
   	      \#Q(\mathcal{C})\lesssim \delta^{-d+b}    , \ \forall \mathcal{C}\in \mathfrak{C}_{\epsilon}. 
   \end{equation}
  Combining (\ref{352}), (\ref{353}) and (\ref{354}), we obtain that for all sufficiently small $\delta>0$, 
  \begin{equation}
  	 \begin{split}
  	 	 \mathbb{P}( \mathsf{A}_{\bm{0}}) \le C \delta^{-b}   \mathbb{E}\big[\big|\mathfrak{C}_{\epsilon} \big| \big]  \overset{(\ref{newineq359})}{\le  } C'\delta^{b}  \epsilon^{-d}\le \delta^{a} . 
  	 \end{split}
  \end{equation}

  It remains to prove the bound (\ref{354}). For any $R\ge 1$, let 
  \begin{equation}\label{def_UR}
  	U(R):=\{ x \in \mathbb{R}^d: \mathcal{C}\subset B(x,R) \}. 
  \end{equation}
 We claim that $U(R)$ satisfies the following two properties: 
 \begin{enumerate}

 	\item $U(R)= \cap_{y\in \mathcal{C}}B(y,R)$;

 	\item $ B(z,1)\subset U(\delta^{-1}+\delta^{b-1}+1)\setminus U(\delta^{-1}- \delta^{b-1}-1)$ for all $z\in Q(\mathcal{C})$.

 \end{enumerate}
 It follows from Property (1) that $U(R)$ is convex. In addition, Property (2) implies  
 \begin{equation}\label{newadd_368}
 	\begin{split}
 		& \mathrm{vol}_{d}\big( \cup_{z\in Q(\mathcal{C})}B(z,1)  \big)\\
 		\le &\mathrm{vol}_{d}\big(   U(\delta^{-1}+\delta^{b-1}+1)\setminus U(\delta^{-1}- \delta^{b-1}-1) \big) \\
 		= & \int_{\delta^{-1}- \delta^{b-1}-1\le t\le \delta^{-1}+\delta^{b-1}+1}  \mathrm{vol}_{d-1}\big( \partial U(t)  \big) \mathrm{d}t.  
 	\end{split}
 \end{equation} 
  Since the area of the boundary of a convex set is increasing with respect to set inclusion (by Cauchy's surface area formula; see e.g. \cite[Theorem 5.5.2]{klain1997introduction}), one has
   \begin{equation}\label{newadd_369}
   	\mathrm{vol}_{d-1}\big( \partial U(t)  \big) \le \mathrm{vol}_{d-1}\big( \partial B(4\delta^{-1})\big) \lesssim \delta^{-d+1}    \end{equation}
   for all $\delta^{-1}- \delta^{b-1}-1\le t\le \delta^{-1}+\delta^{b-1}+1$. By (\ref{newadd_368}) and (\ref{newadd_369}), we obtain (\ref{354}): 
   \begin{equation}
   	\# Q(\mathcal{C})\lesssim  \mathrm{vol}_{d}(\cup_{z\in Q(\mathcal{C})}B(z,1)) \lesssim  \delta^{-d+b}. 
   \end{equation}

    It remains to prove Properties (1) and (2). For Property (1), by the definition of $U(R)$ in (\ref{def_UR}), a point $x\in \mathbb{R}^d$ belongs to $U(R)$ if and only if $|x-y|<R$ for all $y\in \mathcal{C}$. Since the latter condition is also equivalent to $x\in \cap_{y\in \mathcal{C}}B(y,R)$, we obtain Property (1). For Property (2), for any $z\in Q(\mathcal{C})$, by definition one has $\mathcal{C}\subset B(z,\delta^{-1}+\delta^{b-1})$ and $\mathcal{C}\not\subset B(z,\delta^{-1}-\delta^{b-1}) $, which together with the triangle inequality implies that $\mathcal{C}\subset B(x,\delta^{-1}+\delta^{b-1}+1)$ and $\mathcal{C}\not\subset B(x,\delta^{-1}-\delta^{b-1}-1)$ hold for all $x\in B(z,1)$. This proves Property (2) and thus completes the proof.  
 \end{proof}



%

 \section{Proof of Proposition \ref{prop_1.2}}\label{section3_proof_prop1.2}

This section is devoted to proving Proposition \ref{prop_1.2}. Assume that $3\le d\le 5$ and that $\epsilon>0$ satisfies the condition in Lemma \ref{lemma_continuous_diameter}. Let $\delta>0$ be sufficiently small and of the form $\delta=2^{k}$ with $k\in \mathbb{N}^+$, and fix $\mathcal{A}\in \mathfrak{X}^{\mathrm{o}}_n$. In addition, we choose small parameters $\{\beta_j\}_{1\le j\le 6}$ satisfying the following conditions: 
 \begin{itemize}

 	\item[(i)]  $0<\bl\label{para_2} <  \bl\label{para_error}< \bl\label{para_3}<\bl\label{para_4}< \bl\label{new_6}  < \bl\label{para_1} <(100d)^{-1}\cref{const_hitting}\cref{const_crossingloop}(1-\cref{const_crossingloop_new})$;



 	 \item[(ii)] $\bref{para_3}<(2d)^{-1}\cref{const_check_loop}\bref{para_4}$;

 	\item[(iii)]  $\bref{para_4}< (6d)^{-1}\cref{const_oneloop_onecluster} \bref{para_1}$ and $\bref{para_1}< \frac{d+\cref{const_oneloop_onecluster} }{d+\frac{1}{2} \cref{const_oneloop_onecluster}}\cdot \bref{new_6} $, so that $(\bref{new_6}-\bref{para_4})(d+\cref{const_oneloop_onecluster})>d(\bref{para_2}+\bref{para_1})$.

%
%
%



 \end{itemize}
We enumerate the graphs constructed in Section \ref{subsection_isometric_graph} that interpolate between $\mathbb{Z}^d$ and $2\cdot \mathbb{Z}^d$ as $\{\mathbf{G}_i\}_{1\le i\le K}$, with $L=\delta^{\bref{para_1}-1}$ and $M=\delta^{-(\bref{para_2}+\bref{para_1})}$ in such a way  that $\mathbf{G}_1=\mathbf{G}(L,M,\xi\equiv 1)$, $\mathbf{G}_K=\mathbf{G}(L,M,\xi\equiv 2)$ and that for every $1\le i\le K-1$, $\mathbf{G}_i$ and $\mathbf{G}_{i+1}$ differ only within a single box of side length $L$. For convenience, let $\mathbf{G}_{K+1}$ denote the weighted graph with vertex set $\frac{1}{2}\cdot \mathbb{Z}^d$ and weights $\omega(v,w) = 2^{d-2}$ if $v$ and $w$ are adjacent, and $\omega(v,w) = 0$ otherwise.


 Let $\nu=\delta^{2(1-\bref{para_3})}$. We denote the loop soups of intensity $1/2$ on $\mathbf{G}_{i}$ and $\mathbf{G}_{i}^{\nu}$ by $\mathcal{L}_i$ and $\mathcal{L}_i^{\nu}$ respectively (here the subscript $i$ is used to indicate the underlining graph, and the intensity is always $1/2$ unless otherwise specified). We denote by $\mathfrak{C}_i$ the collection of clusters in $\mathcal{L}_i$, and set  
 \begin{equation}\label{notation3.1}
 	\widehat{\mathcal{L}}_i:=(\mathcal{L}_i-\mathcal{L}_i^{\nu} )\cdot \mathbbm{1}_{|\ell|\ge \delta^{\bref{para_4}-1}},\ \widecheck{\mathcal{L}}_i:=(\mathcal{L}_i-\mathcal{L}_i^{\nu} )\cdot \mathbbm{1}_{|\ell| < \delta^{\bref{para_4}-1}}\  \text{and}\   \overline{\mathcal{L}}_i:=\mathcal{L}_i-\widehat{\mathcal{L}}_i. 
 \end{equation}
 Let $\mathfrak{C}_i$ (resp. $\mathfrak{C}_i^{\circ}$) denote the collection of clusters in $\mathcal{L}_i$ (resp. $\mathcal{L}_i-\widecheck{\mathcal{L}}_i$). Note that $\mathfrak{C}_1$ and $\mathfrak{C}_{K+1}$ are the objects of interest in this proposition, defined on the graphs $\widetilde{\mathbb{Z}}^d$ and $\frac{1}{2}\cdot \widetilde{\mathbb{Z}}^d$ respectively. For each $\mathcal{C}\in \mathfrak{C}_i$, we define $\widehat{\mathcal{C}}$ as the union of loops $\ell\in \widehat{\mathcal{L}}_i$ contained in $\mathcal{C}$ (we set $\widehat{\mathcal{C}}:=\emptyset$  if $\mathcal{C}$ does not contain any loop in $\widehat{\mathcal{L}}_i$), and define $\widehat{\mathfrak{C}}_i:=\{\widehat{\mathcal{C}}:\mathcal{C}\in \mathfrak{C}_i \}$. For each $\mathcal{C}^{\circ}\in\mathfrak{C}_i^{\circ}$, we define $\widehat{\mathcal{C}}^{\circ}$ analogously and set $\widehat{\mathfrak{C}}_i^{\circ}:=\{\widehat{\mathcal{C}}^{\circ}:\mathcal{C}^{\circ}\in \mathfrak{C}_i^{\circ}\}$. For $i\ge 1$, we write $r_i:=(i-1)\delta^{ \cref{const_crossingloop_new}(\bref{para_1}- 1) }$. Note that Condition (i) implies $r_K\le C\delta^{ \cref{const_crossingloop_new}(\bref{para_1}- 1)- d(\bref{para_2}+\bref{para_1}) }< \frac{1}{2}\delta^{ \bref{para_error}-1 }$. For convenience, we write $\widehat{\mathfrak{C}}_0^{\circ}:= \mathfrak{C}_1$ and $r_0:=0$. For each $0\le i\le K$, we define $\mathsf{A}_i$ as the intersection of the following events:

  \begin{itemize}

	\item  $\mathsf{A}^1_i$: $(\delta\cdot \widehat{ \mathfrak{C}}_i^{\circ})^{>\epsilon}_{\mathbb{B}} \sqsubseteq  \mathcal{A} + B(\delta r_i )$;

	\item  $\mathsf{A}^2_i$: There does not exist $\mathcal{C} \in  \widehat{ \mathfrak{C}}_i^{\circ}$ such that $\mathcal{C}\subset B(2\delta^{-1}-\delta^{ \bref{para_error}-1 } \cdot \mathbbm{1}_{i\ge 1} -r_i)$ and 
  $$|\mathcal{C}|\in [\epsilon \delta^{-1} - \delta^{ \bref{para_error}-1 }  + r_i ,\  \epsilon \delta^{-1}+  \delta^{ \bref{para_error}-1 } \cdot  ( \mathbbm{1}_{i=0} +1 ) - r_i ];$$

	\item  $\mathsf{A}^3_i$: There does not exist $\mathcal{C}\in  \widehat{ \mathfrak{C}}_i^{\circ}$ such that $\mathcal{C}\subset B(\delta^{-1} +\delta^{ \bref{para_error}-1 } \cdot (\mathbbm{1}_{i=0}+1) -r_i)$, $\mathcal{C} \not\subset B(\delta^{-1} - \delta^{ \bref{para_error}-1 }+r_i)$ and $|\mathcal{C}| \ge \frac{1}{2}\epsilon \delta^{-1} + r_i$.

\end{itemize}  
  By Lemmas \ref{lemma_continuous_diameter} and \ref{lemma_cluster_closetoboundary}, we have 
   \begin{equation}\label{ineq3.3}
   	\mathbb{P}\big( (\delta\cdot   \mathfrak{C})^{>\epsilon}_{\mathbb{B}} \sqsubseteq  \mathcal{A}   \big) \le \mathbb{P} (\mathsf{A}_0 )  +   2\delta^{\frac{1}{2}\bref{para_error}}. 
   \end{equation}

    Before diving into the details, we briefly describe the proof strategy as follows. Broadly speaking, we aim to compare the probabilities of $\mathsf{A}_i$ and $\mathsf{A}_{i+1}$ for $0\le i\le K$. We begin with the event $\mathsf{A}_0$, concerning the clusters $\mathfrak{C}_1$ of the complete loop soup $\mathcal{L}_1$. To transfer from $\mathfrak{C}_1$ to the clusters $\widehat{\mathfrak{C}}_1^{\circ}$ associated with $\mathsf{A}_1$, we proceed in the following two steps: 
  \begin{itemize}
  	
  	\item[-]  Step $1$: remove all loops in $\overline{\mathcal{L}}_1$ (which consists of the massive loop soup $\mathcal{L}^{\nu}$ and the remaining loops $\widecheck{\mathcal{L}}_1$ of diameter less than $\delta^{\bref{para_4}-1}$) from the clusters in $\mathfrak{C}_1$, while preserving the connectivity relations among the loops in $\widehat{\mathcal{L}}_1$;

  	\item[-]  Step $2$: Ignore the contribution of $\widecheck{\mathcal{L}}_1$ to the connectivity relations among the loops in $\widehat{\mathcal{L}}_1$.

  \end{itemize}
  To control the influence of Step $1$, we utilize Corollary \ref{coro3.11} to show that with high probability, the diameters of the clusters removed in Step $1$ (which are clusters in $\overline{\mathcal{L}}_1$) do not exceed $\delta^{ \bref{para_error}-1 }$ (accordingly, the constraints in the events $\{\mathsf{A}_0^j\}_{j\in\{1,2,3\}}$ need to be tightened by $\delta^{\bref{para_error}-1}$, with respect to the Hausdorff distance; this explains the subtraction of $\delta^{\bref{para_error}-1}$ in the definitions of $\mathsf{A}_i^j$, $j\in\{2,3\}$ when passing from $i=0$ to $i=1$). For Step 2, we employ Russo's formula to show that ignoring the loops in $\widecheck{\mathcal{L}}_1$ typically does not change the connectivity relations among the loops in $\widehat{\mathcal{L}}_1$. The comparison between the probabilities of $\mathsf{A}_{K+1}$ and the event of interest on the right-hand side of (\ref{ineq_prop1.2}) proceeds by reversing the above procedure---we add back the loops removed in Steps 1 and 2, and the resulting errors are controlled by exactly the same estimates.

     For $1\le i\le K-1$, the approach to bounding the difference $\mathbb{P}(\mathsf{A}_i)-\mathbb{P}(\mathsf{A}_{i+1})$ has been outlined in Section \ref{subsection2.1_sketch}, so we do not repeat it here. For $i=K$, the graphs $\mathbf{G}_K$ and $\mathbf{G}_{K+1}$ differ only outside $B(\delta^{-\bref{para_2}-1})$. Hence, if the modification affects any cluster within $B(\delta^{-1})$, there must exist a loop crossing the annulus
$B(\delta^{-\bref{para_2}-1})\setminus B(\delta^{-1})$, whose probability can be estimated using (\ref{ineq_crossing_loop}).

   We now turn to the detailed proof. We first show that 
 \begin{equation}\label{ineq_i0_lemma3.1}
 			\mathbb{P} (\mathsf{A}_0 ) \le \mathbb{P} (\mathsf{A}_{1} )   +  2\delta^{\cref{const_check_loop}\bref{para_4}-2d\bref{para_3}}, 
 \end{equation}
 where the exponent $\cref{const_check_loop}\bref{para_4}-2d\bref{para_3}$ is positive by Condition (ii). Let $\mathsf{A}_1^{*}$ (resp. $\{\mathsf{A}_1^{j,*}\}_{1\le j\le 3}$) be the counterpart of $\mathsf{A}_1$ (resp. $\{\mathsf{A}_1^{j}\}_{1\le j\le 3}$) obtained by replacing $\widehat{ \mathfrak{C}}_1^{\circ}$ with $\widehat{ \mathfrak{C}}_1$. We define $\mathsf{K}_1$ as the event that there exists a cluster in $\overline{\mathcal{L}}_{1}$ that intersects $B(2\delta^{-1})$ and has diameter greater than $\frac{1}{2}\delta^{\bref{para_error}-1}$. Note that $B(2\delta^{-1})$ can be covered by $C\delta^{-d\bref{para_error}}$ balls of radius $c\delta^{\bref{para_error}-1}$, and that on the event $\mathsf{K}_1$, there exists one of these balls $B(z,c\delta^{\bref{para_error}-1})$ such that $B(z,c\delta^{\bref{para_error}-1})\xleftrightarrow{\overline{\mathcal{L}}_1} B(z,2c\delta^{\bref{para_error}-1})$ occurs. Therefore, by Corollary \ref{coro3.11}, we have 
\begin{equation}\label{boundK1}
	 \begin{split}
	\mathbb{P}\big( \mathsf{K}_1 \big) \le   C \delta^{-d\bref{para_error}}\cdot    \frac{\delta^{ 2(  \bref{para_error}-1 ) } \delta^{ \cref{const_check_loop} (  \bref{para_4} -1 ) } }{\delta^{ (2+\cref{const_check_loop} ) (   \bref{para_3}-1 ) }}  <  \delta^{ \cref{const_check_loop}\bref{para_4}- 2d\bref{para_3}  }.
	 \end{split}
\end{equation} 
 Meanwhile, we have the inclusion
 \begin{equation}\label{inclusionA2}
 	\mathsf{K}_1^c \cap  \mathsf{A}_0^2 \subset \mathsf{A}_1^{2,*}. 
 \end{equation}
  In fact, on $\mathsf{K}_1^c\cap (\mathsf{A}_1^{2,*})^c$, there exists $\widehat{\mathcal{C}} \in \widehat{\mathfrak{C}}_1$ such that $\widehat{\mathcal{C}}\subset B(2\delta^{-1}-\delta^{ \bref{para_error}-1 })$ and $|\widehat{\mathcal{C}}|\in [\epsilon \delta^{-1} - \delta^{ \bref{para_error}-1 }    ,\  \epsilon \delta^{-1}+  \delta^{ \bref{para_error}-1 } ]$, then since $\mathcal{C}\subset \widehat{\mathcal{C}}+ B(\frac{1}{2}\delta^{ \bref{para_error}-1 })$ (ensured by $\mathsf{K}_1^c$), we have $\mathcal{C} \subset B(2\delta^{-1} )$  and $| \mathcal{C} |\in [\epsilon \delta^{-1} - \frac{1}{2} \delta^{ \bref{para_error}-1 }    ,\  \epsilon \delta^{-1}+  \frac{3}{2}\delta^{ \bref{para_error}-1 } ]$, which contradicts the event $\mathsf{A}_0^2$. Hence, $\mathsf{K}_1^c\cap (\mathsf{A}_1^{2,*})^c\subset (\mathsf{A}_0^2)^c$, which implies (\ref{inclusionA2}). For the same reason, we also have  
  \begin{equation}\label{inclusionA3}
 	 \mathsf{K}_1^c\cap \mathsf{A}_0^3\subset \mathsf{A}_1^{3,*}.  
 \end{equation}
 Using (\ref{inclusionA2}) and (\ref{inclusionA3}), we can further derive that 
 \begin{equation}\label{inclusionA1}
		 \mathsf{K}_1^c\cap \mathsf{A}_0 \subset \mathsf{A}_1^*. 
\end{equation} 
 To see this, on the event $\mathsf{K}_1^c$, if there exists $\mathcal{C}$ included in $( \delta\cdot  \mathfrak{C}_{1})^{>\epsilon}_{\mathbb{B}}$ such that $\widehat{\mathcal{C}}$ does not belong to $( \delta\cdot \widehat{ \mathfrak{C}}_{1})^{>\epsilon}_{\mathbb{B}}$, then since $\mathcal{C}\subset \widehat{\mathcal{C}}+ B(\frac{1}{2}\delta^{ \bref{para_error}-1 })$ (ensured by $\mathsf{K}_1^c$), the cluster $\mathcal{C}$ must satisfy $|\mathcal{C}|\in [\epsilon \delta^{-1}  , \epsilon \delta^{-1} + \frac{1}{2}\delta^{ \bref{para_error}-1 } ]$, which contradicts $\mathsf{A}_0^2$. On the other hand, if for some $\widehat{\mathcal{C}}$ with $\delta\cdot \widehat{\mathcal{C}}\in ( \delta\cdot \widehat{ \mathfrak{C}}_{1})^{>\epsilon}_{\mathbb{B}}$, the cluster $\mathcal{C}$ does not belong to $( \delta\cdot  \mathfrak{C}_{1})^{>\epsilon}_{\mathbb{B}}$, then the inclusion $\mathcal{C}\subset \widehat{\mathcal{C}}+ B(\frac{1}{2}\delta^{ \bref{para_error}-1 })$ implies that $\mathcal{C}\subset B(\epsilon \delta^{-1}+\frac{1}{2}\delta^{ \bref{para_error}-1 })$ and $\mathcal{C}\not\subset B( \epsilon \delta^{-1} )$, which is incompatible with $\mathsf{A}_0^{3,*}$. In conclusion, we obtain $\mathsf{K}_1^c\cap \mathsf{A}_0 \subset \mathsf{A}_1^{1,*}$, which together with (\ref{inclusionA2}) and (\ref{inclusionA3}) yields (\ref{inclusionA1}). Combining (\ref{boundK1}) and (\ref{inclusionA1}), we get 
 \begin{equation}\label{addto_48}
 		\mathbb{P} (\mathsf{A}_0 ) \le \mathbb{P} (\mathsf{A}_{1}^* )   +   \delta^{\cref{const_check_loop}\bref{para_4}-2d\bref{para_3}}. 
 \end{equation}

 In what follows, we bound the difference $\mathbb{P} (  \mathsf{A}_1^*  )-\mathbb{P}  ( \mathsf{A}_1 )$ using Russo's formula. Precisely, for $\alpha\ge 0$, let $\widecheck{\mathcal{L}}_1^{\alpha}$ be defined as $\widecheck{\mathcal{L}}_1$ with intensity $1/2$ replaced by $\alpha$, and denote by $\mathsf{A}_1^\alpha$ the counterpart of $\mathsf{A}_1$ obtained by replacing $\mathcal{L}_1$ with $\mathcal{L}_1-\widecheck{\mathcal{L}}_1^{\alpha}$. By Russo's formula, $\mathbb{P} (  \mathsf{A}_1^*  )-\mathbb{P}  ( \mathsf{A}_1 )$ is bounded by the supremum over $\alpha\in [0,1/2]$ of the expected total mass of the loops in $\widecheck{\mathcal{L}}_1$ that are pivotal for the event $\mathsf{A}_1^\alpha$. Note that such a pivotal loop must connect two disjoint loop clusters in $\mathcal{L}_1-\widecheck{\mathcal{L}}_1^{\alpha}$ with diameter at least $\frac{1}{2}\epsilon \delta^{-1}$. It has been computed in (\ref{ineq344}) that the expected total mass of these pivotal loops is at most 
 \begin{equation}
 	C\epsilon^2 \delta^{\cref{const_check_loop} \bref{para_4}- (2+\cref{const_check_loop})\bref{para_3}  } < \delta^{ \cref{const_check_loop}\bref{para_4}-2d\bref{para_3}  }, 
 \end{equation}
 where we take $M=\frac{1}{2}\epsilon \delta^{-1}$, $m=\delta^{\bref{para_4}-1}$ and $N=\delta^{\bref{para_3}-1}$. Consequently, we obtain 
 \begin{equation}\label{ineq45}
	\mathbb{P} \big(  \mathsf{A}_1^* \big) \le \mathbb{P} \big(  \mathsf{A}_1 \big) + \delta^{ \cref{const_check_loop}\bref{para_4}-2d\bref{para_3}  }. 
\end{equation}
  Combined with (\ref{addto_48}), it yields (\ref{ineq_i0_lemma3.1}).

%

  Let $\mathsf{A}_{K+1}$ be the analogue of $\mathsf{A}_K$ obtained by replacing $\widehat{\mathfrak{C}}_K$ with $\widehat{\mathfrak{C}}_{K+1}$. Recall that $\mathbf{G}_{K+1}=\frac{1}{2}\cdot \mathbb{Z}^d$. Hence, the same reasoning as in the proof of (\ref{ineq_i0_lemma3.1}) also gives 
  \begin{equation}\label{ineq3.7}
  \mathbb{P} (\mathsf{A}_{K+1}  ) \le \mathbb{P}\big(( \delta\cdot  \mathfrak{C}_{K+1})^{>\epsilon}_{\mathbb{B}}  \sqsubseteq  \mathcal{A}+ B(\delta  r_K+ \tfrac{1}{2}\delta^{\bref{para_error}})  \big) + 2\delta^{\cref{const_check_loop}\bref{para_4}-2d\bref{para_3}}.  
  \end{equation}   
 In addition, recall that $\mathbf{G}_K$ and $\mathbf{G}_{K+1}$ differ only outside $[-\delta^{-\bref{para_2}-1},\delta^{-\bref{para_2}-1}]^d$. Thus, if the graph modification affects $(\delta\cdot \widehat{ \mathfrak{C}}_K)^{>\epsilon}_{\mathbb{B}}$, then there exists a loop intersecting both $\partial [-\delta^{-\bref{para_2}-1},\delta^{-\bref{para_2}-1}]^d$ and $B(\delta^{-1})$. Combined with (\ref{ineq_crossing_loop}), it implies 
   \begin{equation}\label{ineq3.5}
       \mathbb{P} (\mathsf{A}_K )  \le  \mathbb{P} (\mathsf{A}_{K+1}) + C \delta^{\bref{para_2}(d-2)}.
      \end{equation}

The following lemma forms the core of our argument.

 \begin{lemma}\label{lemma_3.1}
 		 There exists $\cl\label{const_lemma3.1}>0$ (depending on the parameters) such that 
 	\begin{equation}\label{ineq_lemma3.1}
 		\mathbb{P} (\mathsf{A}_i  ) \le \mathbb{P} (\mathsf{A}_{i+1} ) + \delta^{d(\bref{para_2}+\bref{para_1})+\cref{const_lemma3.1}}, \ \forall 1\le i\le K-1. 
 		 	\end{equation}
 \end{lemma}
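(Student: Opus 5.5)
Fix $1\le i\le K-1$ and let $x+[0,L)^d$ with $L=\delta^{\bref{para_1}-1}$ be the single box in which $\mathbf{G}_i$ and $\mathbf{G}_{i+1}$ differ. The plan is to couple $\mathcal{L}_i$ and $\mathcal{L}_{i+1}$ via Lemma \ref{lemma_coupling_crossingloop}. On the complement of its bad event, which has probability at most $\Cref{const_crossingloop_big}L^{-\cref{const_crossingloop}}$, the following hold. Loops avoiding the box coincide. Crossing loops reaching $\partial B(x,10dL)$ coincide outside $B(x,5dL)$ and move by at most $L^{\cref{const_crossingloop_new}}\le r_{i+1}-r_i$ inside. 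Killing of corresponding crossing loops agrees. Since $L^{-\cref{const_crossingloop}}=\delta^{\cref{const_crossingloop}(1-\bref{para_1})}$ and $\bref{para_1}$ is tiny by Condition (i), this error is already $\ll\delta^{d(\bref{para_2}+\bref{para_1})+c}$. The shrinking margins $r_i$ in $\mathsf{A}^1_i,\mathsf{A}^2_i,\mathsf{A}^3_i$ absorb Hausdorff perturbations of size $L^{\cref{const_crossingloop_new}}$. So it suffices to show that, apart from a small exceptional event, the partition of $\widehat{\mathcal{L}}$-loops into clusters of $\mathcal{L}-\widecheck{\mathcal{L}}$ is the same in both soups. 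I will split according to whether some loop of $\widehat{\mathcal{L}}_i$ (diameter $\ge\delta^{\bref{para_4}-1}$) meets $B(x,5dL)$.

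\textbf{Case 1: such a large loop exists.} It then crosses $B(x,\delta^{\bref{new_6}-1})\setminus B(x,5dL)$, where $\bref{new_6}<\bref{para_1}$ and $\delta^{\bref{new_6}-1}\ll\delta^{\bref{para_4}-1}$. Consider two exceptional events: $\mathsf{F}_x(\delta^{\bref{new_6}-1},5dL)$ (for either soup, with killing allowed since $\nu\le N^{-2}$ at these scales), and the existence of three disjoint crossing clusters of this annulus. By Lemmas \ref{lemma_oneloop_onecluster} and \ref{lemma_threecluster}, these have probability $\lesssim\delta^{(\bref{new_6}-\bref{para_1})(d+\cref{const_oneloop_onecluster})}$. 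Condition (iii) is designed so that $(\bref{new_6}-\bref{para_4})(d+\cref{const_oneloop_onecluster})>d(\bref{para_2}+\bref{para_1})$, so this probability is bounded by $\delta^{d(\bref{para_2}+\bref{para_1})+c}$; I would check the analogous inequality with $\bref{para_1}$ in place of $\bref{para_4}$ and, if needed, shift the inner radius. Off these events, every cluster whose connectivity can be altered by changing loops inside $B(x,5dL)$ must cross the annulus, and therefore must already intersect the coupled large loop outside $B(x,5dL)$. Consequently the cluster partition of large loops is identical in the two soups.

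\textbf{Case 2: no large loop meets $B(x,5dL)$.} Here the changes come only through $\overline{\mathcal{L}}$, i.e.\ through the massive soup $\mathcal{L}^\nu$ together with small killed loops. Step 2 of the $i=0$ argument (Russo's formula with (\ref{ineq344})) already shows that $\widecheck{\mathcal{L}}$ can be ignored up to acceptable error. By the isomorphism theorem, the remaining task is to explore the sign clusters of the massive GFF containing the large loops, outside $B(x,\delta^{\bref{new_6}-1})$. The coupling makes this exploration, and the occupation field on it, identical in both soups. If at most one explored cluster enters the ball, nothing changes. Three or more such clusters is controlled by Lemma \ref{lemma_threecluster} as in Case 1. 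With exactly two partial clusters $D_1,D_2$, I would apply Lemma \ref{newlemma2.3} to write the connection probability as $q/(2-q)$, and then compute $q$ from (\ref{formula_LW}) as $1-\exp(-2\sum\mathbb{K}_{D_1\cup D_2}(v_1,v_2)\phi_{v_1}\phi_{v_2})$. The modification in $B(x,5dL)$ only changes excursions entering that ball. Lemma \ref{lemma_hitting_error}, used at scale $\delta^{\bref{new_6}-1}$ together with a last-exit decomposition, shows these change the kernel by a relative factor $O(L^{-\cref{const_hitting}})$. Hence $q$, and therefore $q/(2-q)$, changes by $O(\delta^{\cref{const_hitting}(1-\bref{para_1})})$. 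This bound beats $\delta^{d(\bref{para_2}+\bref{para_1})}$ by Condition (i).

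\textbf{Assembling and the main obstacle.} Summing the Case 1 and Case 2 errors with the coupling error gives (\ref{ineq_lemma3.1}). I expect Case 2 to be the main obstacle. Making the exploration argument rigorous requires three things: conditioning on a stopping set so that the conditional law inside is a GFF with the explored boundary values; checking that this conditioning is compatible with the coupling of Lemma \ref{lemma_coupling_crossingloop} for the massive soup; and converting the relative excursion-kernel error into a bound on events $\mathsf{A}^j_{i}$, which involve several clusters simultaneously. For the last point, the two-partial-cluster reduction is essential: conditionally, the events differ only through the single indicator $D_1\leftrightarrow D_2$.
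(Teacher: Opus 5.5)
Your architecture is the paper's: the coupling of Lemma~\ref{lemma_coupling_crossingloop}, a split by whether a large loop comes near the box, multi-arm bounds in the first case, and in the second an exploration of massive sign clusters reduced via Lemma~\ref{newlemma2.3}, (\ref{formula_LW}) and Lemma~\ref{lemma_hitting_error}. However, \textbf{Case 1 fails quantitatively as written}.

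\emph{Why the exponent is too weak.} The annulus $B(x,\delta^{\bref{new_6}-1})\setminus B(x,5dL)$ has radius ratio $5dL/\delta^{\bref{new_6}-1}\asymp\delta^{\bref{para_1}-\bref{new_6}}$. Lemmas~\ref{lemma_oneloop_onecluster} and~\ref{lemma_threecluster} therefore give $\delta^{(\bref{para_1}-\bref{new_6})(d+\cref{const_oneloop_onecluster})}$; the exponent you wrote has the wrong sign and is negative. This bound must be $o(K^{-1})=o(\delta^{d(\bref{para_2}+\bref{para_1})})$. Condition (iii) gives the opposite: it forces $\bref{para_1}<\frac{d+\cref{const_oneloop_onecluster}}{d+\frac12\cref{const_oneloop_onecluster}}\bref{new_6}$, hence $(\bref{para_1}-\bref{new_6})(d+\cref{const_oneloop_onecluster})<\bref{new_6}<d(\bref{para_2}+\bref{para_1})$. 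So the Case~1 errors summed over $K$ steps diverge. Condition (iii) is tailored to a different annulus, $B(x,\frac14\delta^{\bref{para_4}-1})\setminus B(x,\delta^{\bref{new_6}-1})$.

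\emph{Why no parameter choice rescues it.} Your Case~2 three-cluster count lives on the radii $[\delta^{\bref{new_6}-1},\frac14\delta^{\bref{para_4}-1}]$. Then both $\bref{para_1}-\bref{new_6}$ and $\bref{new_6}-\bref{para_4}$ would have to exceed $\frac{d}{d+\cref{const_oneloop_onecluster}}\bref{para_1}$. But they sum to $\bref{para_1}-\bref{para_4}<\bref{para_1}$, so this is impossible.

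\emph{The fix.} A loop of diameter at least $\delta^{\bref{para_4}-1}$ meeting the inner ball must cross all the way to radius $\frac14\delta^{\bref{para_4}-1}$, so the whole scale gap is available in Case~1. The paper uses $B(x,\frac14\delta^{\bref{para_4}-1})\setminus B(x,\delta^{\bref{new_6}-1})$ there. Correspondingly, it splits on whether a large loop meets $B(x,\delta^{\bref{new_6}-1})$, not $B(x,5dL)$. (Keeping inner radius $5dL$ but enlarging the outer radius to $\frac14\delta^{\bref{para_4}-1}$ would also work, via the first half of (iii), provided Case~2 then explores outside $B(x,5dL)$.)

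\emph{The radius mismatch in Case 2.} The split radius and the exploration radius must coincide. With your split, large loops may enter $B(x,\delta^{\bref{new_6}-1})\setminus B(x,5dL)$. Partial clusters explored outside $B(x,\delta^{\bref{new_6}-1})$ then miss the connections those loops make inside the ball. The connection event is no longer a sign-cluster connection of the massive GFF between explored sets with known boundary values, which is what Lemma~\ref{newlemma2.3} and (\ref{formula_LW}) compute.

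Two further points.

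First, Lemma~\ref{lemma_coupling_crossingloop} matches killed status but not membership in $\widehat{\mathcal{L}}$. A coupled pair can straddle the sharp diameter threshold $\delta^{\bref{para_4}-1}$. This changes the set of large loops and can change which case occurs for $\mathcal{L}_i$ versus $\mathcal{L}_{i+1}$, and the margins $r_i$ do not absorb it. The paper bounds this event separately, by $O(\delta^{1-\cref{const_crossingloop_new}-\bref{para_4}})$, and you need such an estimate too.

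Second, drop the Russo step in Case~2. $\widehat{\mathfrak{C}}_i^{\circ}$ is built from $\mathcal{L}_i-\widecheck{\mathcal{L}}_i$, so small killed loops never enter $\mathsf{A}_i$ for $i\ge 1$. Paying the $i=0$ error $\delta^{\cref{const_check_loop}\bref{para_4}-2d\bref{para_3}}$ at every step would in any case be fatal, since it is not $o(K^{-1})$.
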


   \begin{proof}[Proof of Proposition \ref{prop_1.2}]
   	Since $K \asymp  \delta^{-d(\bref{para_2}+\bref{para_1})}$, it follows from Lemma \ref{lemma_3.1} that 
   \begin{equation}\label{bound413}
   		\mathbb{P} (\mathsf{A}_1  ) \le \mathbb{P} (\mathsf{A}_{K} ) + C\delta^{\cref{const_lemma3.1}}. 
   \end{equation}
   By (\ref{ineq3.3}), (\ref{ineq_i0_lemma3.1}), (\ref{ineq3.7}), (\ref{ineq3.5}) and (\ref{bound413}), we obtain 
   \begin{equation}
   	\begin{split}
   		   &	\mathbb{P}\big( (\delta\cdot   \mathfrak{C} )^{>\epsilon}_{\mathbb{B}} \sqsubseteq  \mathcal{A}   \big)  \\
   		   	\le & 	\mathbb{P}\big( (\delta \cdot   \mathfrak{C}_{K+1} )^{>\epsilon}_{\mathbb{B}} \sqsubseteq  \mathcal{A} + B(\delta  r_K+ \tfrac{1}{2}\delta^{\bref{para_error}})    \big)  
   		   	 + C \delta^{(\frac{1}{2}\bref{para_error})\land (\cref{const_check_loop}\bref{para_4}- 2d\bref{para_3})  \land [\bref{para_2}(d-2)] \land \cref{const_lemma3.1}   }.
   	\end{split}
   \end{equation}
    Combined with $\delta  r_K+ \tfrac{1}{2}\delta^{\bref{para_error}}\le  \delta^{\bref{para_error}}$ and $\delta\cdot  \mathfrak{C}_{K+1} \overset{\mathrm{d}}{=}\tfrac{\delta}{2}\cdot  \mathfrak{C} $, it completes the proof.    \end{proof}

    It remains to prove Lemma \ref{lemma_3.1}.

\begin{proof}[Proof of Lemma \ref{lemma_3.1}]

        Suppose that $\mathbf{G}_{i}$ and $\mathbf{G}_{i+1}$ differ only within the box $\mathbf{B}_i:=z_i+[0, \delta^{\bref{para_1}-1})^d$. Recall that for each $s\in \{i,i+1\}$, the clusters in $\widehat{\mathfrak{C}}_s^{\circ}$ are obtained by connecting loops in $\widehat{\mathcal{L}}_s$ via the massive loop soup $\mathcal{L}_s^{\nu}$. We divide $\mathsf{A}_s$ into the following two sub-events: 
        \begin{equation}
        	\mathsf{A}_{s,1}:= \mathsf{A}_s \cap \big\{\exists \ell \in \widehat{\mathcal{L}}_s\ \text{intersecting}\ B(z_i,\delta^{\bref{new_6}-1}) \big\}\ \text{and}\ \mathsf{A}_{s,2}:= \mathsf{A}_s\setminus \mathsf{A}_{s,1}.
        \end{equation} 
        We next compare $\mathsf{A}_{i,l}$ with $\mathsf{A}_{i+1,l}$ for each $l\in\{1,2\}$.

           \textbf{Case 1: Comparison between $\mathsf{A}_{i,1}$ and $\mathsf{A}_{i+1,1}$.} By Lemma \ref{lemma_coupling_crossingloop}, there exists a coupling between $\mathcal{L}_i$ and $\mathcal{L}_{i+1}$ such that with probability $1-O(\delta^{\cref{const_crossingloop} (1-\bref{para_1})})$, the following events occur:  
           \begin{itemize}

           	\item[(a)]  The loops in $\mathcal{L}_1$ and $\mathcal{L}_2$ that do not intersect $\mathbf{B}_i$ coincide.

           	\item[(b)] There exists a bijection between loops of $\mathcal{L}_i$ and $\mathcal{L}_{i+1}$ crossing the annulus $B(z_i, 10d\delta^{\bref{para_1}-1})\setminus \mathbf{B}_i$ such that for every pair of corresponding loops $\ell_i\in\mathcal{L}_i$ and $\ell_{i+1}\in\mathcal{L}_{i+1}$, they coincide outside $B(z_i, 5d\delta^{\bref{para_1}-1})$ and their Hausdorff distance is at most $\delta^{\cref{const_crossingloop_new}(\bref{para_1}-1)}$. Moreover, $\ell_i\in\mathcal{L}_i^{\nu}$ if and only if $\ell_{i+1}\in\mathcal{L}_{i+1}^{\nu}$.

           \end{itemize} 
              The key is to show that after the graph modification, with probability at least $1-\delta^{d(\bref{para_2}+\bref{para_1})+\cref{const_lemma3.1}}$ the following events occur:
     \begin{itemize}

     	\item[(c)] The connectivity relations among the loops in $\widehat{\mathcal{L}}_i$ do not change; 
     	

     	\item[(d)] Any loop in $\widehat{\mathcal{L}}_i$ intersecting $\mathbf{B}_i$ differs from its counterpart only inside $B(z_i,5d\delta^{\bref{para_1}-1})$, and their Hausdorff distance is at most $\delta^{\cref{const_crossingloop_new}(\bref{para_1}-1)}$.


     \end{itemize}
     Given this estimate (denoted by ($\star$)), the proof of this lemma is straightforward. In fact, when Events (c) and (d) both occur, there exists a bijection between the clusters of $\widehat{\mathfrak{C}}_i^{\circ}$ and $\widehat{\mathfrak{C}}_{i+1}^{\circ}$ contained in $B(2\delta^{-1})$ such that the Hausdorff distance between every pair of corresponding clusters is at most $\delta^{\cref{const_crossingloop_new}(\bref{para_1}-1)}$. As in (\ref{inclusionA2}), this correspondence together with the event $\mathsf{A}_i^j$ implies that $\mathsf{A}_{i+1}^j$ occurs, for $j\in \{2,3\}$. Moreover, when $\mathsf{A}_{i}^2\cap \mathsf{A}_i^3$ occurs, a cluster $\mathcal{C}_i$ satisfies $\delta \cdot \mathcal{C}_i \in (\delta\cdot \widehat{ \mathfrak{C}}_i^{\circ})^{>\epsilon}_{\mathbb{B}}$ if and only if its corresponding cluster $\mathcal{C}_{i+1}$ satisfies $\delta \cdot \mathcal{C}_{i+1} \in (\delta\cdot \widehat{ \mathfrak{C}}_{i+1}^{\circ})^{>\epsilon}_{\mathbb{B}}$. Consequently, the intersection of $\mathsf{A}_i$ with Events (c) and (d) implies $\mathsf{A}_{i+1}$. This inclusion together with ($\star$) gives the desired bound (\ref{ineq_lemma3.1}).

      Next, we turn to the proof of ($\star$). Assume that $\ell$ is a loop in $\widehat{\mathcal{L}}_i$ intersecting $B(z_i,\delta^{\bref{new_6}-1})$. We denote by $\ell^{\star}$ its corresponding loop in $\widehat{\mathcal{L}}_{i+1}$ under the aforementioned coupling. On Event (b), if $\ell^{\star}\notin \widehat{\mathcal{L}}_{i+1}$ (i.e., $|\ell^{\star}|<\delta^{\bref{para_4}-1}$), then we have 
     \begin{equation}\label{event3.12}
     	\mathrm{ran}(\ell) \subset  B(z_i, \delta^{\bref{para_4}-1}+ 2\delta^{\cref{const_crossingloop_new}(\bref{para_1}-1)} )\  \text{and}\ 	\mathrm{ran}(\ell) \not\subset  B( z_i, \delta^{\bref{para_4}-1}- 2\delta^{\cref{const_crossingloop_new}(\bref{para_1}-1)} ). 
     \end{equation}  
  For (\ref{event3.12}) to occur, $\ell$ must contain a random walk trajectory that starts from $\hat{\partial}\hat{B}(z_i ,\delta^{\bref{para_4}-1}- 2\delta^{\cref{const_crossingloop_new}(\bref{para_1}-1)} )$ and hits $B(z_i,\delta^{\bref{para_1}-1})$ before exiting $B(z_i, \delta^{\bref{para_4}-1} + 2\delta^{\cref{const_crossingloop_new}(\bref{para_1}-1)})$, whose probability is $O(\delta^{\cref{const_crossingloop_new}(\bref{para_1}-1)-(\bref{para_4}-1)})$. Therefore, 
  \begin{equation}
  	\mathbb{P}\big( \ell^{\star}\notin \widehat{\mathcal{L}}_{i+1} \big) \lesssim \delta^{\cref{const_crossingloop_new}(\bref{para_1}-1)-(\bref{para_4}-1)} \le \delta^{1-\cref{const_crossingloop_new}-\bref{para_4}}. 
  \end{equation}
 Meanwhile, Lemma \ref{lemma_oneloop_onecluster} shows that the probability of having a loop cluster in $\mathcal{L}_{i}$ (resp. $\mathcal{L}_{i+1}$) that crosses the annulus $B(z_i, \frac{1}{4}\delta^{\bref{para_4}-1})\setminus B(z_i,\delta^{\bref{new_6}-1})$ and is disjoint from $\ell$ (resp. $\ell^{\star}$) is $O(\delta^{(\bref{new_6}-\bref{para_4})(d+\cref{const_oneloop_onecluster})})$. When such a cluster does not exist, the cluster $\mathcal{C}_i$ (resp. $\mathcal{C}_{i}^{\star}$) containing $\ell$ (resp. $\ell^{\star}$) is the unique cluster in $\mathfrak{C}_i$ (resp. $\mathfrak{C}_{i+1}$) that crosses $B(z_i, \frac{1}{4}\delta^{\bref{para_4}-1})\setminus B(z_i,\delta^{\bref{new_6}-1})$, which implies that Event (c) occurs. Meanwhile, Event (d) is ensured by Event (b). To sum up, we obtain 
  \begin{equation}\label{419}
  \begin{split}
  	  	 \mathbb{P}\big( \mathsf{A}_{i,1} \big)- \mathbb{P}\big( \mathsf{A}_{i+1,1} \big)   \le  C  \delta^{[\cref{const_crossingloop} (1-\bref{para_1})]\land (1-\cref{const_crossingloop_new}-\bref{para_4})\land [(\bref{new_6}-\bref{para_4})(d+\cref{const_oneloop_onecluster})]}    < \tfrac{1}{2}\delta^{d(\bref{para_2}+\bref{para_1})+\cref{const_lemma3.1}}.
  \end{split}
  \end{equation}
  Here we used the fact that $\cref{const_crossingloop} (1-\bref{para_1})$, $1-\cref{const_crossingloop_new}-\bref{para_4}$ and $(\bref{new_6}-\bref{para_4})(d+\cref{const_oneloop_onecluster})$ are all greater than $d(\bref{para_2}+\bref{para_1})$, which can be derived from Conditions (i) and (iii).




            \textbf{Case 2: Comparison between $\mathsf{A}_{i,2}$ and $\mathsf{A}_{i+1,2}$.} In this case, $\widehat{\mathcal{L}}_i$ and $\widehat{\mathcal{L}}_{i+1}$ coincide. For each $\ell \in \widehat{\mathcal{L}}_i$, we define $\mathcal{C}_\ell^{\pm}$ as the partial sign cluster outside $B(z_i, \delta^{\bref{new_6}-1})$ containing $\ell$, i.e., the collection of points $v$ that can be connected to $\ell$ by a path disjoint from $B(z_i, \delta^{\bref{new_6}-1})$ along which $\phi$ has a constant sign. Note that $\mathcal{C}_\ell^{\pm}$ is measurable with respect to the occupation field (i.e., it can be determined without knowing the sign of the GFF). Let $\widehat{\mathcal{C}}^{\pm}:= \cup_{\ell \in \widehat{\mathcal{L}}_i}\mathcal{C}_\ell^{\pm}$. Under the coupling in Lemma \ref{lemma_coupling_crossingloop}, with probability $1-O(\delta^{\cref{const_crossingloop} (1-\bref{para_1})})$ the occupation fields of $\mathcal{L}_i^\nu$ and $\mathcal{L}_{i+1}^\nu$ coincide outside $B(z_i, 5d\delta^{\bref{para_1}-1})$, and hence $\widehat{\mathcal{C}}^{\pm}$ is the same for $\mathcal{L}_i$ and $\mathcal{L}_{i+1}$ (we denote this event by $\mathsf{U}$). Next, we assume that the event $\mathsf{U}$ occurs and consider the following three cases separately according to the number $\mathcal{N}$ of sign clusters in $\widehat{\mathcal{C}}^{\pm}$ intersecting $\hat{\partial}\hat{B}(z_i, \delta^{\bref{new_6}-1})$: (1) $\mathcal{N}\le 1$; (2) $\mathcal{N}\ge 3$; (3) $\mathcal{N}=2$. 

      When $\mathcal{N}\le 1$, the connectivity relations among the loops in $\widehat{\mathcal{L}}_i$ remain unchanged. When $\mathcal{N}\ge 3$, there exist three disjoint sign clusters in $\widehat{\mathcal{C}}^{\pm}$ intersecting $\hat{\partial}\hat{B}(z_i, \delta^{\bref{new_6}-1})$; referring to Lemma \ref{lemma_threecluster}, the probability of this event is bounded by $C\delta^{(\bref{new_6}- \bref{para_4})(d+\cref{const_oneloop_onecluster})}$. Combining these estimates, we obtain 
      \begin{equation}\label{newineq420}
      	\begin{split}
      		\mathbb{P}\big( \mathsf{A}_{i,2}\big) \le & \mathbb{P}\big( \mathsf{A}_{i,2}, \mathsf{U} ,\mathcal{N}\le 1 \big)+ \mathbb{P}\big( \mathsf{A}_{i,2}, \mathsf{U} ,\mathcal{N}\ge 3 \big)+ \mathbb{P}\big( \mathsf{U}^c \big)   \\
      		&+ \mathbb{P}\big( \mathsf{A}_{i,2}, \mathsf{U} ,\mathcal{N}=2\big)\\
      		\le  & \mathbb{P}\big( \mathsf{A}_{i+1 ,2}, \mathsf{U}  , \mathcal{N}\le 1 \big) + C\big(\delta^{\cref{const_crossingloop} (1-\bref{para_1})} +\delta^{(\bref{new_6}- \bref{para_4})(d+\cref{const_oneloop_onecluster})}   \big) \\
      		&+ \mathbb{P}\big( \mathsf{A}_{i,2}, \mathsf{U} ,\mathcal{N}=2\big). 
      	\end{split}
      \end{equation}

      It remains to consider the case when $\mathcal{N}=2$, i.e., there are exactly two clusters $\mathcal{C}^*_1,\mathcal{C}^*_2$ in $\widehat{\mathcal{C}}^{\pm}$ intersecting $\hat{\partial}\hat{B}(z_i, \delta^{\bref{new_6}-1})$.     Note that $\big| \frac{q_1}{2-q_1} -\frac{q_2}{2-q_2}\big|\le 2|q_1-q_2|$ holds for all $q_1,q_2\in [0,1]$. Therefore, by Lemma \ref{newlemma2.3} and (\ref{formula_LW}), given $\widehat{\mathcal{C}}^{\pm}$ and the values of the occupation field on the boundary of $\widehat{\mathcal{C}}^{\pm}$, the difference between the conditional probabilities of $\{\mathcal{C}^*_1\xleftrightarrow{} \mathcal{C}^*_2\}$ on $\widetilde{\mathbf{G}}_i$ and $\widetilde{\mathbf{G}}_{i+1}$ is bounded by 
       \begin{equation}\label{bound420}
       2	\big| e^{-2S^i } - e^{- S^{i+1}}\big|,
       \end{equation}
       where for $j\in \{i,i+1\}$, $S^j:=\sum_{v_1\in \widetilde{\partial}\mathcal{C}^*_1, v_2\in \widetilde{\partial}\mathcal{C}^*_2 } \mathbb{K}^j_{\widehat{\mathcal{C}}^{\pm}}(v_1,v_2)\phi_{v_1} \phi_{v_2}$, and $\mathbb{K}^{j}_\cdot(\cdot, \cdot)$ is the boundary excursion kernel on $\widetilde{\mathbf{G}}_j$. Moreover, it follows from Lemma \ref{lemma_hitting_error} that      \begin{equation}
     	\big| \mathbb{K}^i_{\widehat{\mathcal{C}}^{\pm}}(v_1,v_2)-  \mathbb{K}^{i+1}_{\widehat{\mathcal{C}}^{\pm}}(v_1,v_2)  \big| \lesssim \delta^{\cref{const_hitting}(1-\bref{para_1})}   \big( \mathbb{K}^i_{\widehat{\mathcal{C}}^{\pm}}(v_1,v_2) + \mathbb{K}^{i+1}_{\widehat{\mathcal{C}}^{\pm}}(v_1,v_2) \big)      \end{equation}
     	for all $v_1\in \widetilde{\partial}\mathcal{C}^*_1$ and $v_2\in \widetilde{\partial}\mathcal{C}^*_2$. As a result,  
     	\begin{equation}\label{bound422}
     		\big|S^i-S^{i+1}\big|\lesssim \delta^{\cref{const_hitting}(1-\bref{para_1})}\big( S^i+ S^{i+1} \big) \lesssim \delta^{\cref{const_hitting}(1-\bref{para_1})} \cdot \max \big\{ S^i, S^{i+1}   \big\}. 
     	\end{equation}
     	Meanwhile, for any $t_1,t_2\ge 0$, one has 
     	\begin{equation}\label{421}
     		\big| e^{-t_1} -e^{-t_2} \big|= \big| \int_{t_1}^{t_2} e^{-s} \mathrm{d}s \big| \le  |t_1-t_2|. 
     	\end{equation}
     	 By (\ref{bound420}), (\ref{bound422}) and (\ref{421}), the difference between the conditional probabilities of $\{\mathcal{C}^*_1\xleftrightarrow{} \mathcal{C}^*_2\}$ on $\widetilde{\mathbf{G}}_i$ and $\widetilde{\mathbf{G}}_{i+1}$ is $O( \delta^{\cref{const_hitting}(1-\bref{para_1})})$. Moreover, if the occurrence of $\{\mathcal{C}^*_1\xleftrightarrow{} \mathcal{C}^*_2\}$ remains unchanged, then Event (c) occurs, implying that $\mathsf{A}_{i,2}$ occurs if and only if $\mathsf{A}_{i+1,2}$ does. Consequently, we obtain 
 \begin{equation*}
 	\begin{split}
 		& \mathbb{P}\big( \mathsf{A}_{i,2}, \mathsf{U} ,\mathcal{N}=2\big)- \mathbb{P}\big( \mathsf{A}_{i+1,2}, \mathsf{U} ,\mathcal{N}=2\big)\\
 		\le & \mathbb{E}\big[ \mathbbm{1}_{\mathsf{U} ,\mathcal{N}=2 } \cdot  \big| \mathbb{P}\big(\mathcal{C}^*_1\xleftrightarrow{} \mathcal{C}^*_2\ \text{for}\ \mathcal{L}_i^{\nu} \mid \widehat{\mathcal{C}}^{\pm} \big) - \mathbb{P}\big( \mathcal{C}^*_1\xleftrightarrow{} \mathcal{C}^*_2\ \text{for}\ \mathcal{L}_{i+1}^{\nu}  \mid  \widehat{\mathcal{C}}^{\pm} \big) \big|  \big]  		\lesssim   \delta^{\cref{const_hitting}(1-\bref{para_1})}. 
 	\end{split}
 \end{equation*}
 Combined with (\ref{newineq420}), it implies 
 \begin{equation}\label{427}
 	\mathbb{P}\big( \mathsf{A}_{i,2} \big)- \mathbb{P}\big( \mathsf{A}_{i+1,2} \big)  \le C  \delta^{[\cref{const_crossingloop} (1-\bref{para_1})]\land [(\bref{new_6}- \bref{para_4})(d+\cref{const_oneloop_onecluster})] \land [\cref{const_hitting}(1-\bref{para_1})] }  <   \tfrac{1}{2}\delta^{d(\bref{para_2}+\bref{para_1})+\cref{const_lemma3.1}}. 
 \end{equation}
  	As in (\ref{419}), here we used the fact that $\cref{const_crossingloop} (1-\bref{para_1})$, $(\bref{new_6}- \bref{para_4})(d+\cref{const_oneloop_onecluster})$ and $\cref{const_hitting}(1-\bref{para_1})$ are all greater than $d(\bref{para_2}+\bref{para_1})$ (by Conditions (i) and (iii)). Combining (\ref{419}) and (\ref{427}), we obtain Lemma \ref{lemma_3.1}. 
 \end{proof}

{\color{blue}

%
%


}


{\color{red}


 



}

  \section*{Acknowledgments}

  We warmly thank Itai Benjamini, Gady Kozma, Ron Peled and Ofer Zeitouni for fruitful discussions. J. Ding is supported by the National Natural Science Foundation of China (Grant No. 12231002, 12595284, 12595280), and by the New Cornerstone Science Foundation through the New Cornerstone Investigator Program and XPLORER PRIZE.




\end{document}